\numberwithin{equation}{section}
\newtheorem{theorem}{Theorem}[section]
\newtheorem*{theorem*}{Theorem}
\newtheorem{remark}[theorem]{Remark}
\newtheorem{lemma}[theorem]{Lemma}
\newtheorem{proposition}[theorem]{Proposition}
\newtheorem{definition}[theorem]{Definition}
\newcommand{\R}{\mathbb{R}}
\newcommand{\E}{\mathbb{E}}
\author{Nassif Ghoussoub, Young-Heon Kim, and Aaron Zeff Palmer}
\address{* Nassif Ghoussoub and Aaron Zeff Palmer}
\address{Department of Mathematics\\ University of British Columbia\\ Vancouver, V6T 1Z2 Canada}
\email{nassif@math.ubc.ca,  azp@math.ubc.ca}
\address{* Young-Heon Kim}
\address{Department of Mathematics\\ University of British Columbia\\ Vancouver, V6T 1Z2 Canada \&
\newline
Center for Mathematical Challenge\\ Korea Institute for Advanced study, Seoul, Korea.}
\email{yhkim@math.ubc.ca}
\title[Monge transport Brownian martingale]{A solution to the Monge transport problem for Brownian martingales}
\thanks{The  first two 
authors are partially supported by  the 
Natural Sciences and Engineering Research Council of Canada (NSERC). \\
{\em MSC 2010 subject classifications}  Primary 49, 60; Secondary 52.\\
\copyright 2019 by the author.
}
\date{\today}
\begin{document}

\begin{abstract}
	We provide a solution to the problem of optimal transport by Brownian martingales in general dimensions whenever the transport cost satisfies certain subharmonic properties in the target variable, as well as a stochastic version of the standard ``twist condition" frequently used in deterministic Monge transport theory. This setting includes in particular the case of the distance cost  $c(x,y)=|x-y|$.  We prove existence and uniqueness of the solution and characterize it as the first time  Brownian motion hits  a barrier that is determined by solutions to a corresponding dual problem.
\end{abstract}

\maketitle
\tableofcontents

\section{Introduction} \label{sec:introduction}

Given a cost function $c:\mathbb{R}^d \times \R^d \to \R$,  the Monge optimal transport problem  seeks a map $T: \R^d \to \R^d$ that pushes forward a given probability measure $\mu$ to $\nu$ (written as $T_\#\mu =\nu$) while minimizing the total transport cost:
\begin{align}\label{eqn:Monge}
	\mathcal{T}_c (\mu, \nu)=\inf_{T_\#\mu =\nu} \int_{\R^d} c\big(x, T(x)\big) \mu(dx).
\end{align}
Kantorovich \cite{kantorovich1942translocation} proposed the following linear relaxation of the Monge problem, which was eventually shown in \cite{pratelli2007equality} to have the same infimum provided $c$ is continuous and $\mu$ is non-atomic: 
\begin{align}\label{eqn:Kanto}
	\mathcal{T}_c (\mu, \nu)= \inf_{\pi \in \Gamma(\mu,\nu)}\int_{\R^d\times \R^d} c(x, y)\pi(dx,dy), 
\end{align}
where $\Gamma(\mu,\nu)$ is the set of probability measures on $\R^d \times \R^d$, whose marginals are $\mu$ and $\nu$. In turn, the last expression can also be written in terms of an optimal statistical correlation problem between probability measures given by 
\begin{align}\label{eqn:Stat}
	\mathcal{T}_c (\mu, \nu)= \inf\Big\{\mathbb{E} \big[ c(X, Y)\big]; X\sim \mu, Y\sim \nu\Big\}, 
\end{align}
where $X\sim \mu$ means that the random variable $X$ has $\mu$ as its distribution.

Monge \cite{M} originally formulated Problem \eqref{eqn:Monge} with the distance cost $c(x,y) =|x-y|$, and a solution  was provided more than 200 years later \cite{E-G} \cite{ambrosio-optimal2003} \cite{caffarelli2002constructing} \cite{Trudinger-Wang2001-Monge}.  Brenier \cite{B1} on the other hand, dealt with the important case of the squared distance $c(x, y) =|x-y|^2$, which was 
extended by  Caffarelli \cite{caffarelli96-general-cost}, Gangbo and McCann \cite{G-M}, 
 Gangbo \cite{gangbo1995habilitation}, Levin \cite{levin1999abstract} and Ma-Trudinger-Wang \cite{ma2005regularity} to more general cost functions $c$ that satisfy the so-called {\em twist condition}, namely, 
\begin{align}\label{eqn:twist}
	\hbox{the map $y \mapsto \nabla_x c(x, y)$ is one-to-one for all $x$.}
\end{align}
This condition was shown to be sufficient to ensure that the minimizer of the relaxed problem \eqref{eqn:Kanto} is indeed supported on the graph of a single-valued map $T$, hence solving the original problem \eqref{eqn:Monge}. In the statistical terms of \eqref{eqn:Stat}, the result means that the optimal random variables are perfectly dependent, i.e., $Y=T\circ X$ a.e.

We also note that optimal transport problems associated to cost functions of the form $g(|x-y|)$, where $g$ is convex can be represented by ones given by the generating function of an associated Lagrangian $L$, i.e.,  
\begin{equation}
	c(x, y)=\inf\Big\{\int_0^1  L\big(t, \gamma (t), \dot \gamma (t)\big) dt;\ \gamma (0)=x,\ \gamma (1)=y\Big\}. 
\end{equation}
In this case, optimizing maps are closely related to the corresponding Hamiltonian dynamics (e.g.,  Benamou-Brenier \cite{Be-Br}, Bernard-Buffoni \cite{B-B}).

 {\it The martingale transport problem} was motivated by questions of option-pricing in mathematical finance. See for example \cite{hobson2011skorokhod} \cite{Dolinsky2014} \cite{henry2017model}. It consists of a constrained version  of (\ref{eqn:Stat}), when the pair of random variables $(X, Y)$ forms a one-step martingale, i.e.,
\begin{align}\label{eqn:Mart}
	\mathcal{M}_c (\mu, \nu)= \inf\Big\{\mathbb{E} \big[ c(X, Y)\big]; X\sim \mu, Y\sim \nu \, \hbox{and $\mathbb{E}[Y|X]=X$}\Big\}. 
\end{align}
The martingale property on $(X, Y)$ entails that $(w (X), w (Y))$ is a real valued submartingale for any convex function $w$ on $\R^d$.  This implies a necessary condition on the probability measures $\mu$ and $\nu$ for such a martingale pairing to exist, namely, that they are in convex order, i.e.,
\begin{equation*}
	\mu \prec_C \nu \quad \hbox {which means \,\, $\int  w(x)\mu(dx) \le \int w(y) \nu(dy)$ for all convex functions $w$}. 
\end{equation*} 
A remarkable theorem of Strassen \cite{strassen1965existence} states that the convex order between $\mu$ and $\nu$ is also sufficient for the existence of a one-step martingale starting at $\mu$ and ending at $\nu$, hence providing a martingale transport for Problem \eqref{eqn:Mart}. The martingale transport problem is 
by now well understood in dimension one, including the formulation of corresponding dual principles, cases where the latter are attained, as well as questions of uniqueness and structure of the optimal martingales in the primal problem (see for example \cite{hobson2011skorokhod}, \cite{Dolinsky2014} \cite{henry2017model}, \cite{beiglboeck-juillet2016}, \cite{beiglbock2017complete}, \cite{BLO17}).  The higher dimensional case however is less understood as could be seen in \cite{ghoussoub2015structure}, where a solution for dimension $d=2$ is given. 

In this paper, we shall address {\it the Brownian martingale transport problem}, which involves a more particular class of martingales and can be formulated as 
\begin{align}\label{eqn:primal}
	\mathcal{P}_c(\mu, \nu) =  \inf_{\tau} \Big\{\mathbb{E} \big[ c(B_0, B_\tau)\big]; \ B_0 \sim \mu \quad \& \quad B_\tau \sim \nu\Big\}, 
\end{align}
where $(B_t)_t$ is Brownian motion starting with distribution $\mu$ and ending at a stopping time $\tau$ such that $B_\tau$ realizes the distribution $\nu$. Note that here again, this imposes (in dimension $d\geq 2$) an even more stringent condition on the pair $(\mu, \nu)$, namely that they are in {\it subharmonic order,} i.e.
\begin{equation*}
	\mu \prec_{SH}\nu \quad \hbox {which means \,\, $\int  w(x)\mu(dx) \le \int w(y) \nu(dy)$ for all subharmonic functions $w$}. 
\end{equation*}
This problem was originally proposed by Skorokhod in one dimension \cite{skorokhod1965studies} with $\mu=\delta_0$, where he proved the existence of a randomized stopping time $\tau$ such that $B_\tau \sim \nu$, where $(B_t)_t$ is a Brownian motion starting at $0$.  The existence of a nonrandomized stopping time in one dimension was established in \cite{dubins1968theorem} and \cite{root1969existence}.  An alternative construction of a randomized stopping was given by \cite{rost1971stopping} for more general processes.  The existence of a nonrandomized stopping time in higher dimensions was established by Baxter and Chacon \cite{baxter1974potentials} and extended in \cite{falkner1980skorohod}. A thorough review of solutions to the Skorokhod problem and its connections with optimal transport has been given in \cite{obloj2004skorokhod}.





Another formulation of this problem, similar to Kantorovich's relaxation, but which restricts the {\it transport plans} to follow Brownian paths, is the following:
\begin{align}\label{eqn:KantoS}
	\mathcal{P}_c (\mu, \nu)= \inf_{\pi \in \mathcal{BM}(\mu,\nu)}\int_{\R^d\times \R^d} c(x, y)\pi(dx,dy), 
\end{align}
where each $\pi \in \mathcal{BM}(\mu,\nu)$ is a probability measure on $\R^d \times \R^d$ with marginals $\mu$ and $\nu$, satisfying
$
	\delta_x\prec_{SH} \pi_x \ {\rm for\ }\mu{\rm-a.e.}\ x, 
$
where $\pi_x$ is the disintegration of $\pi(dx,dy)=\pi_x(dy)\mu(dx)$. Again, such transport plans $\pi$ can be seen as joint distributions of $(B_0,B_\tau)\sim \pi$, where $B_0 \sim \mu$, $B_\tau \sim \nu$ and $\tau$ is a possibly randomized stopping time for the Brownian filtration. See for example \cite{ghoussoub-kim-lim-radial2017}. Under the assumption that $\mu$ and $\nu$ have compact support, the set $\mathcal{BM}(\mu, \nu)$ is a weak$^*$-compact set of measures in the Banach space dual to the bounded continuous functions, and Problem \eqref{eqn:KantoS}  has an optimal solution $\pi^* \in \mathcal{BM}(\mu, \nu)$. 	However, similar to the issue triggered by Kantorovich's relaxation of \eqref{eqn:Monge}, such a solution may very well correspond to  a randomized stopping time. A main objective of this paper is to figure out when there exists a true and unique optimal stopping time that solves the problem. 

This problem  is equivalent to the martingale problem in the one dimensional case, a context addressed by Henry-Labordere-Touzi \cite{HLT2015} and others.  The duality theory and applications to mathematical finance were developed in \cite{hobson2015robust} and \cite{beiglboeck-juillet2016}, which also establish the existence and uniqueness of an optimal nonrandomized optimal stopping time under certain conditions.   Motivated by the methods of standard mass transports, in \cite{beiglboeck2017optimal} Beiglb\"ock, Cox, and Huesmann further developed the duality theory and a monotonicity principle for problems with stochastic cost processes of the form,
$$
	\inf_{\tau}\Big\{\mathbb{E}\big[G_\tau\big];\ B_0\sim \mu,\ B_\tau\sim \nu\Big\}.
$$
In particular, they used this theory to solve the problem in the case  $G_\tau=g(\tau)$ where $g$ is strictly convex (resp.\ concave), by showing that  the unique optimal stopping time is given by the classical Root (resp.\ Rost) embedding, which is the hitting time of a space-time barrier.  The problem of the attainment of the dual problem remained a challenge. In \cite{beiglbock2017complete}, the authors prove that a relaxed dual problem is attained when $d=1$ or costs of the form (\ref{eqn:primal}), and then the attainment of the original dual problem was established for $d=1$ in \cite{BLO17}.  By using PDE methods, dual attainment was extended for $d>1$ in \cite{GKPS} to the case where the cost is an integral along the path, that is for problems of the form 
 \begin{align}\label{eqn:Lag-inf}
	\inf_\tau \Big\{\mathbb{E}\Big[ \int_0^\tau  L(t, B_t) dt\Big];\ B_0 \sim \mu,\ B_\tau \sim \nu \Big\}.
\end{align}
For these costs, it is shown in \cite{GKPS} that the unique optimal stopping time is the hitting time of a space-time barrier when $L$ is strictly increasing (resp.\ decreasing).
	
Our goal in this paper is to solve Problem \eqref{eqn:primal} in general dimensions for costs that only depend on the initial and final states and not on the path traveled between them, and we make special note of the case where the cost is given by the Euclidean distance $c(x,y)=|x-y|$. It is important to note that --unlike standard mass transports-- Problem \eqref{eqn:primal} cannot be reduced to \eqref{eqn:Lag-inf} nor to 
\begin{align} \label{eqn:stochCost}
	\inf_{\tau, A} \Big\{\E\Big[ \int_0^\tau L(t,X_t,A_t)dt\Big];\ X_0\sim \mu,\ X_\tau\sim \nu\Big\},
\end{align}
where $X_t\in \R^n$ solves the SDE: $dX_t= f(X_t,A_t)dt+\sigma dW_t.$ We mention that because, at least in (\ref{eqn:Lag-inf}), the time monotonicity of the Lagrangian cost enables one to construct an appropriate barrier set in space-time, as was done in \cite{beiglboeck2017optimal} and \cite{GKPS}, hence establishing both the uniqueness and the hitting-time characterization of the optimal stopping. A similar result was established in \cite{ghoussoub-kim-lim-radial2017} in the case where the measures $\mu$, $\nu$ and the cost $c(x,y)$ are radially symmetric.

We also note that another advantage of the Lagrangian formulation of the cost is the associated Eulerian (mass flow) formulation to the optimal transport problem \cite{Be-Br}, \cite{B-B}, which was also extended to the free-end case in both the deterministic \cite{GKP} and stochastic cases \cite{GKPS} as long as the cost is of Lagrangian type. However, this seminal approach cannot be used for the costs we are considering in this paper,  where we prove --among other things-- the following result. 
 
\begin{theorem*} [see Theorem \ref{cor:distance-unique}]
	Let $\mu$ and $\nu$ be two probability measures that are supported on an open bounded convex set $O\subset \R^d$.  Assuming  $\mu\prec_{SH}\nu$ and that they have continuous densities with respect to Lebesgue measure, then there is a unique optimal stopping time $\tau^*$ that minimizes 
\begin{align}
	\inf_{\tau} \Big\{\mathbb{E}  |B_0 -B_\tau|; \ B_0 \sim \mu \quad \& \quad B_\tau \sim \nu\Big\}.  
\end{align}
Moreover, $\tau^*$ is only randomized at $0$, while otherwise it is given by the hitting time 
\begin{equation}
	\eta = \inf\{t>0;\ (B_0,B_t)\in R\}
\end{equation}
for some measurable $R\subset \overline{O}\times \overline{O}$.
\end{theorem*}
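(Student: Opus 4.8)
The plan is to derive the statement from a dual/obstacle-problem analysis of Problem \eqref{eqn:primal}, checking along the way that $c(x,y)=|x-y|$ has the two structural features the argument needs: subharmonicity in the target variable, and a stochastic version of the twist condition. \textbf{Step 1: relaxation and structure of the cost.} First I would pass to the relaxed problem \eqref{eqn:KantoS} over the weak-$*$ compact set $\mathcal{BM}(\mu,\nu)$, fix an optimal $\pi^*$, and record that for fixed $x$ the map $y\mapsto|x-y|$ is subharmonic on $\R^d$ and, for $d\ge 2$, harmonic on no open set, since $\Delta_y|x-y|=(d-1)/|x-y|>0$ for $y\ne x$; and that $\nabla_x\nabla_y|x-y|=-|x-y|^{-1}\bigl(I-\hat n\hat n^{\top}\bigr)$ with $\hat n=(y-x)/|y-x|$ is negative semidefinite of rank $d-1$ and nondegenerate transverse to the radial direction. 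This last fact is the sense in which $|x-y|$ satisfies a \emph{stochastic} twist condition, even though it badly violates the deterministic twist \eqref{eqn:twist} (the map $y\mapsto\nabla_x|x-y|=(x-y)/|x-y|$ is far from injective) — precisely the feature that made the classical Monge problem for this cost delicate.

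\textbf{Step 2: the dual problem and the barrier.} Attach a multiplier $\psi\in C(\overline O)$ to the target constraint $B_\tau\sim\nu$. With $x$ frozen as a parameter, the inner maximization is an optimal stopping problem for Brownian motion run in the $y$-variable with payoff $\psi(B_\tau)-|x-B_\tau|$; its value function $w_x$ (as a function of the Brownian starting position) is the least superharmonic majorant on $O$ of the obstacle $g_x(y):=\psi(y)-|x-y|$, and the contact set $R_x:=\{y:w_x(y)=g_x(y)\}$ is where it is optimal to stop. The analytic core of the argument is \emph{dual attainment}: one must produce a single $\psi$ for which the hitting times $\tau_x=\inf\{t>0:B_t\in R_x\}$, averaged over $x\sim\mu$, push $\mu$ forward to exactly $\nu$. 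I would obtain this by a monotone PDE/continuation scheme on this family of obstacle problems, with a priori estimates coming from the continuity of the densities of $\mu$ and $\nu$ (and the boundedness and convexity of $O$, which keep the relevant Brownian paths in a fixed compact set); this is what upgrades the relaxed dual optimizer available from \cite{beiglbock2017complete} to a genuine $\psi$, and provides enough regularity of $\psi$ and of the free boundaries $\partial R_x$ to run complementary slackness pointwise.

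\textbf{Step 3: the optimal stopping is a hitting time.} Set $R:=\{(x,y)\in\overline O\times\overline O:w_x(y)=\psi(y)-|x-y|\}$ and $\eta:=\inf\{t>0:(B_0,B_t)\in R\}$. The plan $\mathrm{law}(B_0,B_\eta)$ is concentrated on $R$, where the dual constraint is tight, so complementary slackness forces $\E|B_0-B_\eta|=\mathcal P_c(\mu,\nu)$ once $B_\eta\sim\nu$ is checked. The marginal can only fail through points $x$ at which $\pi^*_x$ must split as $p(x)\delta_x+(1-p(x))\sigma_x$ — part of the mass at $x$ stays, the rest spreads — which no pure hitting time can reproduce. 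That splitting is realized by one independent coin flip at time $0$ (stop immediately with probability $p(B_0)$, otherwise run $\eta$); since for $t>0$ the rule is the deterministic hitting rule for $R$, the resulting optimal $\tau^*$ is randomized at $0$ and nowhere else, as asserted.

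\textbf{Step 4: uniqueness, and the main obstacle.} If $\tau_1,\tau_2$ are both optimal then the averaged plan $(\pi^1+\pi^2)/2\in\mathcal{BM}(\mu,\nu)$ is optimal too, so $\pi^1$ and $\pi^2$ are both concentrated on the contact set $R$ of the dual optimizer; the strict subharmonicity of $|x-y|$ in $y$ makes this $R$ essentially unique, and then the stochastic twist of Step 1 together with the continuity of the densities forces $\pi^1=\pi^2$ and forces each to agree with the non-randomized hitting time $\eta$ off the diagonal — the twist playing exactly its deterministic-Monge role, promoting ``concentrated on a thin set'' to ``given by a (hitting-time) map.'' The two steps I expect to be genuinely hard are the dual attainment of Step 2 (constructing a bona fide optimal $\psi$ from the obstacle-problem hierarchy, where the continuous-density hypothesis is essential) and controlling the twist argument across the singularity of $|x-y|$ on the diagonal, where the mixed Hessian blows up; the latter is intertwined with the randomization-at-$0$ phenomenon, so Steps 3 and 4 really have to be carried out in tandem.
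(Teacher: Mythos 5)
Your overall scaffolding (dual problem via a family of obstacle problems, contact set defining a barrier $R$, complementary slackness, a stochastic twist to promote a thin barrier to a hitting-time map, and randomization only at $t=0$) matches the paper's architecture, but two of the load-bearing steps are asserted rather than proved, and a third uses an argument the paper's proof shows is not available.

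The most serious gap is \textbf{dual attainment (your Step 2)}. You propose a ``monotone PDE/continuation scheme on the family of obstacle problems'' and treat it as a black box. Nothing of the sort is known to produce a single $\psi$; the paper's mechanism is entirely different and is the real content of the argument. One first shows the dual supremum may be restricted to the set $\mathcal{B}_D$ of nonpositive $D$-superharmonic functions vanishing on $\partial O$ (this requires two explicit ``improvements'': replacing $\psi$ by $\psi-\psi^{SH}$, where the identity $J_{\psi-\psi^{SH}}=J_\psi-\psi^{SH}$ uses $\Delta_y c\ge 0$; and then by $\inf_z\{J(z,\cdot)+c(z,\cdot)\}$, using $\Delta_y c\le D$). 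Then one proves $\mathcal{B}_D$ is a weakly compact convex subset of $H^1_0(O)$, that $\psi\mapsto\int J_\psi(x,x)\,\mu(dx)$ is weakly lower semicontinuous there (this needs $\mu\in H^{-1}(O)$, hence your continuous-density hypothesis), and finally that functions in $\mathcal{B}_D$ satisfy a continuity-along-Brownian-paths property (Lemma~\ref{lem:Sobolev_stopping_time_convergence}) that lets the verification/hitting-time analysis go through for the lower semicontinuous limit $\psi^*$. None of this is captured by a continuation scheme, and without it your $R$ and $\eta$ have no $\psi^*$ to be built from. Relatedly, the diagonal singularity of $|x-y|$ must be handled by replacing $c$ with a smoothed subharmonic $c^\epsilon$ that agrees with $|x-y|$ for $|x-y|\ge\epsilon$ and working first under disjoint supports, then localizing; your Step~3 conflates this with the randomization issue but does not carry either out.

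The second gap is the \textbf{uniqueness argument (Step 4)}. You claim ``the strict subharmonicity of $|x-y|$ makes $R$ essentially unique,'' but the paper makes no such claim and it is not justified: dual optimizers need not be unique, and uniqueness of $\tau^*$ must be proved without assuming uniqueness of the barrier. The actual argument is: given two optimal $\tau_1,\tau_2$, both stop at $t=0$ with density $f\wedge g$ (Lemma~\ref{lem:stayput}) and for $t>0$ each is \emph{some} hitting time; then $\tau'=\tfrac12\tau_1+\tfrac12\tau_2$ (coin flip at $0$) is optimal too, hence also a hitting time for $t>0$, which is impossible unless $\tau_1=\tau_2$. Your claim that ``$\tau^*$ is randomized at $0$ and nowhere else'' with the specific density $f\wedge g$ is also asserted but not derived; the paper's Lemma~\ref{lem:stayput} proves it using the triangle inequality and a nontrivial concatenation of stopping times $\sigma$ then $\xi$, and this is what lets the general-density case reduce to the disjoint-support case. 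Finally, a small but real point: the version of the stochastic twist used for $|x-y|$ in the paper is not your mixed-Hessian computation but the observation that $\nabla_x|x-y|$ is $S^{d-1}$-valued, so $\mathbb{E}[\nabla_xc(x,B^y_\sigma)]=\nabla_xc(x,y)$ together with strict Jensen forces $B^y_\sigma$ to stay on a ray, which is impossible for $\sigma>0$ in $d\ge 2$.
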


 We shall first establish the following weak duality formula for the primal problem \eqref{eqn:primal} under the mere assumption that $c\in C(\R^d\times \R^d)$:
\begin{equation}\label{weak.dual}
		\mathcal{P}_c (\mu, \nu)=D_c(\mu,\nu):= \sup_{\psi\in LSC(\overline{O})} \Big\{\int_O \psi(y)\nu(dy) - \int_{O} J_\psi(x,x)\mu(dx)\Big\}, 
	\end{equation}
where $ LSC(\overline{O})$ is the cone of all bounded lower semi-continuous functions on $\overline O$.  Throughout the paper we will denote $B^x_t$ as the Brownian motion beginning with $B_0^x=x$, and we will use the restriction of stopping times to this set of paths without changing notation so that for continuous functions $h\in C(\R^d\times \R^d)$ and stopping times $\tau$,
$$
	\mathbb{E}\big[ h(B_0,B_\tau)\big]=\int_{\R^d} \mathbb{E}\big[h(x,B_\tau^x)\big]\mu(dx).
$$ 
The so-called value function $J_\psi$ is defined as   
\begin{align}\label{eqn:J-psi} J_\psi (x, y)= \sup_{\tau \leq \tau_O}  \mathbb{E} \big[ \psi (B^y_\tau) - c(x, B^y_\tau)  \big], 
 \end{align}
where $\tau_O$ is the first exit time of the set $O$.  We point out two other useful characterizations of the value function $J_\psi$.  Under some regularity assumptions on $\psi$ and $c$, and for each fixed $x\in \overline{O}$, the function $y\mapsto J_\psi(x,y)$ is the unique viscosity solution to the obstacle problem for $u\in C(\overline{O})$:
\begin{align*}
\begin{cases}
			u(y)\geq { \psi}(y)-{ c}(x,y),\ {\rm for}\ y\in O,\\
			u(y)= { \psi}(y)-{ c}(x,y)\ {\rm for\ }y\in \partial O,\\
			\Delta u(y)\leq 0\ {\rm for }\ y\in O,\\
			\Delta u(y)= 0\ {\rm whenever }\ u(y)> { \psi}(y)-{ c}(x,y),
		\end{cases}
\end{align*}
as well as the unique minimizer of the variational problem
\begin{align*}
	\min_{u\in H^1(O)}\Big\{\int_O \big|\nabla u(y)\big|^2dy;\ u\geq \psi-{ c}(x,\cdot),\ u-\psi+{c}(x,\cdot)\in H_0^1(O)\Big\}.
\end{align*}

We then assume the following condition on the cost:
\begin{align}\label{eqn:subharmonic-cost}
 \hbox{ $y \mapsto c(x, y)$ is subharmonic and $D$-superharmonic, i.e., $0\leq \Delta_yc(x,y)\leq D$},
\end{align}
which will yield that the dual problem $D_c(\mu,\nu)$ can actually be restricted to a weakly compact set of functions  $\mathcal{B}_D\subset H_0^1(O)$ that consists of bounded and nonpositive $D$-superharmonic functions that vanish on $\partial O$.  These functions are lower semicontinuous and satisfy a remarkable property of continuity along Brownian paths (see Lemma \ref{lem:Sobolev_stopping_time_convergence}). These properties of the maximizing set $\mathcal{B}_D$ then guarantee that the dual problem (\ref{weak.dual}) is attained by a sufficiently regular function $\psi^*$ (see Theorem~\ref{thm:strong_dual}). 

Condition (\ref{eqn:subharmonic-cost}) on the cost can be weakened in several cases. Indeed, while the distance cost, $c(x,y)=|x-y|$ does not satisfy $D$-superharmonicity at the singular points $x=y$, we can avoid this issue  under the assumption that the supports of $\mu$ and $\nu$ are strictly disjoint. We then use a localization argument to handle the general case (without the disjointness assumption) for this important cost. Moreover, the subharmonicity of $y \to  c(x, y)$ can also be replaced by the condition that  $M=\inf_{x,y \in O}\Delta_y c(x, y)>-\infty$, since in this case one can replace $c$ with the subharmonic cost $\tilde c(x, y)=c(x, y) - h(y)$ where $h$ is any function in $C^\infty (O)$ such that $\Delta h(y) \leq M.$ On the other hand, a one-dimensional counterexample to the dual attainment problem in the case where \eqref{eqn:subharmonic-cost} is not satisfied is given in \cite{beiglbock2013model}. See \cite{beiglboeck-juillet2016} and \cite{ghoussoub2015structure} for related results. 

For the issue of uniqueness, we isolate a second condition on the cost which can be seen as a stochastic counterpart of the twist condition \eqref{eqn:twist}. It can be stated as follows: For each pair of states $(x,y)$ and any stopping time $\tau$ for Brownian motion starting at $y$, 
\begin{align}\label{eqn:ST-intro}
	\mathbb{E}\big[\nabla_xc(x,B_{\tau}^y)\big] = \nabla_xc(x,y) \quad \Longrightarrow	\quad 	\tau=0.
\end{align}
We therefore call it the {\it stochastic twist condition}. We note that a related condition on cost functions was introduced in the one dimensional case in \cite{hobson2012model} as {\it increasing (resp.\ decreasing) variation swap kernel } and in \cite{HLT2015} as {\it the martingale counterpart of the Spence-Mirrlees condition}, and it reads as
\begin{equation}
c_{yyx}(x,y)>(<)0.
\end{equation}
The stochastic twist condition enables us to prove that the optimal stopping time $\tau^*$ is unique and is characterized  as the first hitting time of a barrier set determined by the dual optimizer $\psi^*$; see Theorem~\ref{thm:hitting-unique}. The delicate proof also uses the differentiability of the value function, which we obtain from a Lipschitz assumption on $x\mapsto c(x,y)$, as well as the dynamic programming principle defining $J_\psi$.
 In particular, the distance cost $|x-y|$ satisfies the stochastic twist condition when $x\not=y$; a property that was used in \cite{ghoussoub2015structure} for   the general martingale optimal transport problem, and was realized to apply to the case of Brownian martingales by Tongseok Lim.  We extend our results to this particular and important cost function in Theorems~ \ref{thm:distance-unique} and ~\ref{cor:distance-unique}.  

\begin{remark}\label{rmk:riemann-intro2} 
The results in this paper,  including Theorems~\ref{thm:distance-unique} and ~\ref{cor:distance-unique} and the related results on the dual attainment (Theorem~\ref{thm:strong_dual}),  hold in more general settings such as for Brownian motion valued in a  geodesically convex bounded domain $O$ of a complete nonpositively curved Riemannian manifold, when the cost $c$ is given by the Riemannian distance $d(x,y)$. Here, the Laplace operator is replaced with the Laplace-Beltrami operator \cite{grigor1999analytic}. Note that the stochastic twist condition then holds  for any differentiable Riemannian distance, while nonpositive curvature implies subharmonicity (in fact, convexity) of $y\mapsto d(x, y)$.  The proofs are adaptable from the Euclidean case with additional technical complications, which we do not address further. 
\end{remark}

The paper is organized as follows: 
In Section \ref{sec:weak_dual} we prove the weak duality principle \eqref{weak.dual} and reduce the problem to optimizing over the end potential by using a dynamic programming principle. 
This gives a novel point of view for  problem \eqref{eqn:primal}. In Section \ref{sec:BD}, we introduce a remarkable set of $D$-superharmonic functions $\mathcal{B}_D$, and represent it as a weakly compact  convex subset of $H^1_0(O)$. 
In Section ~\ref{sec:dual_attainment} we prove our first main result, namely the attainment in the dual problem (Theorem \ref{thm:strong_dual}), by showing that one can restrict the maximization of \eqref{weak.dual}   to the set $\mathcal{B}_D$.   Some of the key lemmas there use condition \eqref{eqn:subharmonic-cost} in a crucial way.
This then yields a barrier set for the verification theorem (Theorem~\ref{thm:verification}).  Section \ref{sec:ST} discusses the key Stochastic Twist condition \eqref{eqn:ST-intro}  and includes a few important examples. Section \ref{sec:hitting_time} contains the proof for uniqueness and the characterization of the optimal solution to \eqref{eqn:primal} as a first hitting time (Theorem \ref{thm:hitting-unique}). The case of the distance cost is finally addressed in Section \ref{sec:distance} (Theorems \ref{thm:distance-unique}, \ref{cor:distance-unique}).   The appendix contains a couple of technical results used in Sections \ref{sec:weak_dual} and \ref{sec:BD}.   Additionally, we shall use our result on dual attainment to provide there a quick proof of a version of the monotonicity principle of Beiglb\"ock, Cox, and Huesmann \cite{beiglboeck2017optimal} that is adapted to our setting. 

\section{A dual variational principle}\label{sec:weak_dual}

Duality is a key aspect of many optimization problems. 
In our case, we shall see that the dual problem to $\mathcal{P}_c(\mu, \nu)$ arises directly as the linear maximization problem
	\begin{align}\label{eqn:dual}
		\mathcal{D}'_c(\mu, \nu) = \sup_{(\psi, J)  \in \mathcal{A}_c} \Big[  \int_{\R^d} \psi (y)\nu(dy)  - \int_{\R^d} J(x, x) \mu(dx) \Big]
	\end{align}
	with linear constraints
		\begin{align}\label{eqn:A_c}
 		\mathcal{A}_c =\big\{ (\psi, J)\in C(\R^d)\times C(\R^d\times\R^d);\  J(x,y)\geq \psi(y)-c(x,y)\, {\rm and}\, \Delta_yJ(x,y) \leq 0 \big\},
	\end{align}
	where we understand the inequality $\Delta_yJ(x,y) \leq 0$ in the sense of viscosity.  The equivalence a probabilistic notions of superharmonic functions will be recalled in Lemma \ref{lem:superharmonic}, and later with a weak and variational notions in Proposition \ref{lem:BD_properties} and Lemma \ref{lem:variational}.

	Define the operator $V_{c,x}^\psi$ as 
	\begin{align*}
 		V_{c,x}^\psi[J](y):=\min\big\{J(x, y) -\psi(y) + c(x, y), -\Delta_y J(x, y) \big\},
	\end{align*}
	we can then understand $\mathcal{A}_c$ as the set of pairs $(\psi,J)$ such that for each $x\in \R^d$, $y\mapsto J(x,y)$ is a supersolution of $V_{c,x}^\psi [J(x,\cdot)](y) \ge 0$ in the sense of viscosity. 

	We assume that the measures $\mu$ and $\nu$ have support on a bounded open convex domain $O$.  In this case we can determine $J$ in terms of $\psi$ as the minimal viscosity supersolution.  We call this the value function, and it is given by
	\begin{align}\label{eqn:J_psi}
		J_\psi(x,y):= \sup_{\tau\leq \tau_O}\mathbb{E}\big[\psi(B^y_\tau)-c(x,B^y_\tau)\big],
	\end{align}
	where $B^y_t$ is the Brownian motion with $B^y_0=y$, $\tau_O$ is the exit time from $O$, and the supremum is over all  stopping times prior to $\tau_O$.  In the following we denote by $LSC(\overline{O})$ the set of bounded, lower semicontinuous functions on $\overline{O}$. Recall the definition of Brownian martingale plans between $\mu$ and $\nu$ as the finite measures $\pi$ on $O\times O$ with marginals $\mu$ and $\nu$ that disintegrate as $\pi(dx,dy)= \pi_x(dy) \mu(dx)$ in such a way that $\delta_x\prec_{SH} \pi_x\ {\rm for}\ \mu$-a.e.  $x\in O$. We set $\mathcal{BM}(\mu,\nu)$ to be the set of such transport plans, i.e. 
	\begin{align}\label{eqn:BM}
		\mathcal{BM}(\mu,\nu) = \big\{\pi\geq 0,\  \pi(\cdot, O)=\mu,\  \pi(O,\cdot)=\nu,\ \delta_x\prec_{SH} \pi_x\ {\rm for}\ \mu\hbox{-a.e.}\ x\in O\big\}.
	\end{align}

	\begin{theorem}\label{thm:weakdual} Suppose that $c\in C(\R^d\times\R^d)$ and $\mu$ and $\nu$ are probability measures with support in an open, bounded and convex subset $O$ of $\R^d$.  Then, 
		\begin{align}\label{eqn:weak_duality}
 			\mathcal{P}_c(\mu, \nu) = \mathcal{D}'_{c}(\mu, \nu)=\mathcal{D}_{c}(\mu, \nu) := \sup_{\psi\in LSC(\overline{O})}\Big\{\int_O \psi(y)\nu(dy)-\int_O J_\psi(x,x)\mu(dx)\Big\}, 
		\end{align}
		and there is $\pi^* \in \mathcal{BM}(\mu,\nu)$ such that
		\begin{align*}
			\mathcal{P}_c(\mu,\nu) = \int_{O}\int_{O} c(x,y)\pi^*(dx,dy).
		\end{align*}
 	\end{theorem}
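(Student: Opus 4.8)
The plan is to establish $\mathcal{D}'_c(\mu,\nu)=\mathcal{D}_c(\mu,\nu)\le\mathcal{P}_c(\mu,\nu)$ by soft weak-duality arguments, then to close the gap with $\mathcal{P}_c\le\mathcal{D}_c$ via a minimax theorem, and finally to extract $\pi^*$ from compactness. Throughout one may assume $\mu\prec_{SH}\nu$: otherwise there is a subharmonic $v$ (which, after mollification, may be taken smooth and bounded on $\overline{O}$) with $\int v\,d\mu>\int v\,d\nu$, and testing $\mathcal{D}_c$ against the superharmonic functions $\psi=-tv$ with $t\to+\infty$ (using that superharmonic $\psi$ gives $J_\psi(x,x)\le\psi(x)+\max_{\overline{O}\times\overline{O}}(-c)$) forces $\mathcal{D}_c=+\infty$, while $\mathcal{BM}(\mu,\nu)=\emptyset$ forces $\mathcal{P}_c=+\infty$. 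I also use that $\mathcal{P}_c(\mu,\nu)=\inf_{\pi\in\mathcal{BM}(\mu,\nu)}\int\int c\,d\pi$: a randomized stopping time with $B_0\sim\mu$, $B_\tau\sim\nu$ produces $\pi=\mathrm{law}(B_0,B_\tau)\in\mathcal{BM}(\mu,\nu)$, and conversely, disintegrating $\pi=\pi_x(dy)\mu(dx)$ with $\delta_x\prec_{SH}\pi_x$, the Skorokhod/balayage construction \cite{rost1971stopping} realizes each $\pi_x$ as $\mathrm{law}(B^x_{\tau_x})$ for a randomized $\tau_x\le\tau_O$, which a measurable selection glues into a single stopping time. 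This balayage characterization of the fiber constraint is the first of the technical facts I would put in the appendix.

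\emph{Weak duality.} Fix $(\psi,J)\in\mathcal{A}_c$ and $\pi\in\mathcal{BM}(\mu,\nu)$, and realize $\pi_x=\mathrm{law}(B^x_{\tau_x})$ as above. By Lemma~\ref{lem:superharmonic} the viscosity supersolution $y\mapsto J(x,y)$ is excessive for Brownian motion, so $t\mapsto J(x,B^x_{t\wedge\tau_O})$ is a bounded supermartingale and $\mathbb{E}[J(x,B^x_{\tau_x})]\le J(x,x)$; combining this with $\psi(y)-J(x,y)\le c(x,y)$ gives $\int\psi\,d\nu-\int J(x,x)\,\mu(dx)=\int_O(\mathbb{E}[\psi(B^x_{\tau_x})]-J(x,x))\,\mu(dx)\le\int_O\mathbb{E}[\psi(B^x_{\tau_x})-J(x,B^x_{\tau_x})]\,\mu(dx)\le\int\int c\,d\pi$, hence $\mathcal{D}'_c\le\mathcal{P}_c$. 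The same supermartingale inequality applied to $v=J(x,\cdot)$, together with $v\ge\psi-c(x,\cdot)$, gives $v(y)\ge\sup_{\tau\le\tau_O}\mathbb{E}[\psi(B^y_\tau)-c(x,B^y_\tau)]=J_\psi(x,y)$; so for $(\psi,J)\in\mathcal{A}_c$ one has $-\int J(x,x)\,d\mu\le-\int J_\psi(x,x)\,d\mu$ and, as $\psi$ is continuous hence in $LSC(\overline{O})$, $\mathcal{D}'_c\le\mathcal{D}_c$. For the reverse inequality, given $\psi\in LSC(\overline{O})$ pick continuous $\psi_n\uparrow\psi$; regularity of the obstacle problem on the convex (hence regular) domain $O$ makes each $J_{\psi_n}$ continuous on $\overline{O}\times\overline{O}$ — Lipschitz in $x$ with the modulus of $c$, continuous in $y$ up to $\partial O$ — and a viscosity supersolution with $J_{\psi_n}\ge\psi_n-c$, so $(\psi_n,J_{\psi_n})\in\mathcal{A}_c$ after any continuous extension off $\overline{O}$. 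Interchanging the two suprema and applying monotone convergence gives $J_{\psi_n}(x,x)=\sup_\tau\mathbb{E}[\psi_n(B^x_\tau)-c]\uparrow\sup_\tau\mathbb{E}[\psi(B^x_\tau)-c]=J_\psi(x,x)$, whence $\int\psi\,d\nu-\int J_\psi(x,x)\,d\mu=\lim_n(\int\psi_n\,d\nu-\int J_{\psi_n}(x,x)\,d\mu)\le\mathcal{D}'_c$. Thus $\mathcal{D}'_c=\mathcal{D}_c\le\mathcal{P}_c$.

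\emph{No gap.} Let $K=\{\pi\ge0\text{ on }\overline{O}\times\overline{O}:\ \pi\text{ has first marginal }\mu,\ \delta_x\prec_{SH}\pi_x\text{ for }\mu\text{-a.e. }x\}$, a weak$^*$-compact convex set, and put $L(\pi,\psi)=\int\int c\,d\pi+\int\psi\,d\nu-\int\int\psi(y)\,d\pi$ for $\psi\in C(\overline{O})$. For $\pi\in K$, $\sup_\psi L(\pi,\psi)$ equals $\int\int c\,d\pi$ if the second marginal of $\pi$ is $\nu$ and $+\infty$ otherwise, so $\inf_{\pi\in K}\sup_\psi L=\mathcal{P}_c$. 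Since $\pi\mapsto L(\pi,\psi)$ is weak$^*$-continuous and affine and $\psi\mapsto L(\pi,\psi)$ is affine and norm-continuous, Sion's minimax theorem (or Fenchel--Rockafellar duality, which handles the $+\infty$-valued constraint directly) gives $\mathcal{P}_c=\sup_\psi\inf_{\pi\in K}L(\pi,\psi)$. For fixed $\psi$, $\inf_{\pi\in K}L(\pi,\psi)=\int\psi\,d\nu-\sup_{\pi\in K}\int\int(\psi(y)-c(x,y))\,d\pi$; because a measure in $K$ is exactly one disintegrating over $\mu$ with fibers dominating $\delta_x$ in subharmonic order, a disintegration/measurable-selection identity (the second technical fact for the appendix) yields $\sup_{\pi\in K}\int\int(\psi(y)-c(x,y))\,d\pi=\int_O\sup\{\int(\psi(y)-c(x,y))\,\lambda(dy):\ \delta_x\prec_{SH}\lambda,\ \mathrm{supp}\,\lambda\subseteq\overline{O}\}\,\mu(dx)$, and the inner supremum equals $\sup_{\tau\le\tau_O}\mathbb{E}[\psi(B^x_\tau)-c(x,B^x_\tau)]=J_\psi(x,x)$: stopping-time laws lie in the constraint set, while conversely any admissible $\lambda$ satisfies $\int(\psi-c(x,\cdot))\,d\lambda\le\int J_\psi(x,\cdot)\,d\lambda\le J_\psi(x,x)$ since $J_\psi(x,\cdot)$ is superharmonic and dominates $\psi-c(x,\cdot)$. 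Hence $\mathcal{P}_c=\sup_{\psi\in C(\overline{O})}(\int\psi\,d\nu-\int J_\psi(x,x)\,d\mu)\le\mathcal{D}_c$, and with the weak duality above all three quantities coincide.

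\emph{Primal attainment, and the main obstacle.} The set $\mathcal{BM}(\mu,\nu)$ is nonempty (by Strassen's theorem \cite{strassen1965existence} together with \cite{rost1971stopping}, since $\mu\prec_{SH}\nu$), weak$^*$-closed, and contained in the weak$^*$-compact set of probability measures on $\overline{O}\times\overline{O}$, hence weak$^*$-compact; as $c$ is bounded and continuous there, $\pi\mapsto\int\int c\,d\pi$ is weak$^*$-continuous and attains its minimum at some $\pi^*\in\mathcal{BM}(\mu,\nu)$, which by the first step equals $\mathcal{P}_c(\mu,\nu)$. The delicate point is the no-gap step: justifying the minimax interchange — for which the weak$^*$-compactness of $K$ and the weak$^*$-continuity of $\pi\mapsto\int\int(\psi-c)\,d\pi$ are the essential inputs — together with the two appendix identities, namely the balayage/Skorokhod realization of $\delta_x\prec_{SH}\pi_x$ by Brownian motion stopped before exiting $O$, and the disintegration of $\sup_{\pi\in K}\int h\,d\pi$ into the integral of the fiberwise suprema. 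The regularity of $J_\psi$ used to place $(\psi_n,J_{\psi_n})$ in $\mathcal{A}_c$ is a second, more routine, ingredient.
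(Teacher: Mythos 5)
Your proof is correct in its essential structure, and it reaches the same conclusion via a route that differs from the paper's in two places.

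First, the no-gap step. The paper's Fenchel--Rockafellar argument dualizes \emph{both} the ``first marginal $=\mu$'' constraint and the fiberwise subharmonic-order constraint at once, through the dual pair $(\psi,J)$ with $J$ superharmonic in $y$, and then passes from $\mathcal{D}'_c$ to $\mathcal{D}_c$. You instead fold those two constraints into the weak$^*$-compact convex set $K$, dualize only the ``second marginal $=\nu$'' constraint against $\psi\in C(\overline{O})$, and apply Sion/Fenchel--Rockafellar to that smaller Lagrangian. This lands you directly on $\sup_{\psi\in C(\overline{O})}\{\int\psi\,d\nu-\int J_\psi(x,x)\,d\mu\}$, but it shifts the technical load: instead of having to justify the FR interchange over the superharmonic cone of $J$'s, you need (i) the balayage realization of $\delta_x\prec_{SH}\pi_x$ by a stopping time $\tau_x\le\tau_O$, and (ii) the disintegration/measurable-selection identity $\sup_{\pi\in K}\int\int(\psi-c)\,d\pi=\int_O J_\psi(x,x)\,d\mu$. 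These are the right things to put in an appendix and they are true, but they are of comparable weight to the step they replace, so neither route is clearly more elementary.

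Second, the comparison of $\mathcal{D}_c$ and $\mathcal{D}'_c$. The paper proves $\mathcal{D}_c\le\mathcal{D}'_c$ by taking an arbitrary $\psi\in LSC(\overline{O})$ and pairing it against the primal optimizer $\tau^*$, using the already-established equality $\mathcal{P}_c=\mathcal{D}'_c$. You instead take continuous $\psi_n\uparrow\psi$ and show $(\psi_n,J_{\psi_n})\in\mathcal{A}_c$, relying on continuity of $J_{\psi_n}$ up to $\partial\overline{O}$ (regularity of the obstacle problem on a convex domain). This is a genuinely different mechanism; it is correct, but the continuity claim for $J_{\psi_n}$ on $\overline{O}\times\overline{O}$, together with a continuous extension to $\R^d\times\R^d$ as required by the definition of $\mathcal{A}_c$, deserves an explicit lemma — it is the analogue of the regularity that the paper instead sources implicitly through Lemma~\ref{lem:SH_hitting_time}. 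As an aside, once you have $(\psi_n,J_{\psi_n})\in\mathcal{A}_c$ and the minimax identity $\mathcal{P}_c=\sup_{\psi\in C(\overline{O})}(\cdots)$, you already get $\mathcal{P}_c\le\mathcal{D}'_c$; combined with your weak duality this closes all three equalities, so the $LSC$-approximation step is not strictly needed, though it does no harm.

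One improvement over the paper worth keeping: you handle $\mu\not\prec_{SH}\nu$ explicitly by showing both $\mathcal{P}_c$ and $\mathcal{D}_c$ diverge, which the paper leaves implicit. The primal-attainment argument via weak$^*$-compactness of $\mathcal{BM}(\mu,\nu)$ matches the paper's.
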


 	Before giving the proof of Theorem \ref{thm:weakdual}, we start by noting the relationships between the various formulations of our problem.  Recall first the following correspondence between (possibly randomized) stopping times and subharmonic martingale measures. While randomized stopping times have many equivalent representations, it suffices to consider their law as probability measures on $C(\R^+,\R^d)\times \R^+$ that disintegrate with respect to Wiener measure as a measure $\eta$ on $\R^+$ such that $A_t = \int_0^t \eta(ds)$ is adapted to the filtration of the Brownian motion.  The only topology we consider is the weak* convergence of the probability law on $C(\R^+,\R^d)\times \R^+$.
	\begin{lemma}[See for example Theorem 2.1 in \cite{ghoussoub-kim-lim-radial2017}]\label{lem:measure-stoppingtime}
		Let $\sigma$ and $\rho$ be probability measures on $\overline{O}$ and let $(B_t)_t$ denote Brownian motion starting according to $B_0\sim \sigma$. Then, 
		\begin{enumerate}

		\item	For each (possibly randomized) stopping time $\tau\leq \tau_O$ satisfying $B_\tau\sim \rho$, there is $\pi\in \mathcal{BM}(\sigma,\rho)$ such that $(B_0,B_\tau)\sim \pi$.
		
		\item Conversely, for each $\pi\in\mathcal{BM}(\sigma,\rho)$, there exists a (randomized) stopping time $\tau\leq \tau_O$, such that $(B_0,B_\tau) \sim \pi$. 
		\end{enumerate}
	\end{lemma}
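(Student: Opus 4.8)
The plan is to prove the two implications separately: the first is an exercise in optional stopping, while the second rests on a Skorokhod‑type embedding together with a measurable selection.

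For (1), I would take $\pi$ to be the law of the pair $(B_0,B_\tau)$ on $\overline O\times\overline O$; this makes sense because $\tau\le\tau_O<\infty$ almost surely, so $B_\tau$ is a genuine $\overline O$‑valued random variable. By hypothesis its marginals are $\sigma$ and $\rho$, so after disintegrating $\pi(dx,dy)=\pi_x(dy)\sigma(dx)$ it remains to check $\delta_x\prec_{SH}\pi_x$ for $\sigma$‑a.e.\ $x$. Fix a bounded subharmonic $w$ on a neighborhood of $\overline O$; a routine truncation argument reduces the subharmonic order to such $w$, since the maximum of a subharmonic function with a constant is again subharmonic. As the randomization defining $\tau$ is taken independent of $B$, the process $B$ is still a Brownian motion — hence $w(B_t)$ a local submartingale — in the enlarged filtration; being stopped at $\tau\le\tau_O$ and bounded, $w(B_{t\wedge\tau})$ is then a uniformly integrable submartingale, and optional stopping yields $\mathbb{E}\big[w(B_\tau)\mid\mathcal F_0\big]\ge w(B_0)$ a.s. Since $B_0$ is $\mathcal F_0$‑measurable, conditioning further on $B_0$ and disintegrating gives $\int w\,d\pi_x\ge w(x)$ for $\sigma$‑a.e.\ $x$; running $w$ over a countable order‑determining family yields $\delta_x\prec_{SH}\pi_x$, hence $\pi\in\mathcal{BM}(\sigma,\rho)$.

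For the converse (2), fix $\pi\in\mathcal{BM}(\sigma,\rho)$ and disintegrate it as above. For $\sigma$‑a.e.\ $x$ we have $\delta_x\prec_{SH}\pi_x$ with both measures carried by $\overline O$; in potential‑theoretic terms $\pi_x$ is a balayage of $\delta_x$ relative to Brownian motion killed on leaving $O$. By the higher‑dimensional Skorokhod embedding theorem in its randomized form — Rost \cite{rost1971stopping} (see also Baxter–Chacon \cite{baxter1974potentials}, Falkner \cite{falkner1980skorohod}, and the survey \cite{obloj2004skorokhod}) — there is a randomized stopping time $\tau^x\le\tau_O$ for $B^x$ with $B^x_{\tau^x}\sim\pi_x$. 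I would then select $x\mapsto\tau^x$ measurably: the laws of randomized stopping times bounded by $\tau_O$ form a Borel subset of the probability measures on $C(\R^+,\R^d)\times\R^+$ that disintegrate against Wiener measure as an adapted additive functional of total mass $\le\tau_O$, and the constraint ``$B_\tau$ has law $\pi_x$'' is closed, so a measurable selection theorem applies. Finally I would glue: sample $B_0\sim\sigma$ and, conditionally on $\{B_0=x\}$, run the rule $\tau^x$ along the Brownian path started at $x$. The resulting $\tau$ is a randomized stopping time for the Brownian filtration enlarged by the selection variable, it is bounded by $\tau_O$, and its joint law is $\int(\delta_x\otimes\pi_x)\,\sigma(dx)=\pi$.

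The routine half is (1). The main obstacle is the bookkeeping in (2): first, pinning down the exact equivalence between $\delta_x\prec_{SH}\pi_x$ on $\overline O$ and the existence of an embedding bounded by $\tau_O$ — this requires reading subharmonicity relative to the domain $O$, invoking Rost's theorem for the killed process, and keeping track of how mass on $\partial O$ is treated; second, carrying out the measurable‑in‑$x$ selection and checking that the glued object is a randomized stopping time in precisely the sense fixed just before the lemma, with $(B_0,B_\tau)\sim\pi$. Both steps are technical but standard, and an alternative is simply to invoke Theorem~2.1 of \cite{ghoussoub-kim-lim-radial2017}.
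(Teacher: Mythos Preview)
The paper does not give a proof of this lemma at all; it is stated with the bracketed citation to Theorem~2.1 of \cite{ghoussoub-kim-lim-radial2017} and used as a black box. Your sketch is a correct outline of the standard argument underlying that cited result --- optional stopping for (1), and a Rost/Baxter--Chacon type embedding for each fiber $\pi_x$ together with a measurable selection for (2) --- and you yourself note at the end that one may simply invoke the reference, which is precisely what the paper does.
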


	We shall need the following characterization for superharmonic lower semicontinuous functions. We include a proof for completeness.
		\begin{lemma}\label{lem:superharmonic}
	For $\phi\in LSC(\overline{O})$ the following are equivalent:
	\begin{enumerate}
		\item $\Delta \phi(x)\leq 0$ in the sense of viscosity for all  $x\in O$;
		\item For any stopping time $\tau\leq \tau_O$ and all $y\in \overline{O}$, we have
		\begin{align}\label{eqn:superharmonic}
			\phi (y) \ge \mathbb{E}\big[ \phi (B^y_\tau) \big]. 
		\end{align}
	\end{enumerate}
	\end{lemma}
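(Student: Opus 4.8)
The plan is to prove the two implications separately, the direction $(2)\Rightarrow(1)$ being the more routine one and $(1)\Rightarrow(2)$ being where the real work lies. For $(2)\Rightarrow(1)$: suppose $\phi$ fails to be a viscosity supersolution at some $x_0\in O$, i.e.\ there is a smooth $\varphi$ touching $\phi$ from below at $x_0$ with $\Delta\varphi(x_0)>0$. By continuity $\Delta\varphi>0$ on a small ball $B_r(x_0)\subset O$, so $\varphi$ is strictly subharmonic there; applying It\^o's formula to $\varphi(B^{x_0}_{t\wedge\tau_r})$, where $\tau_r$ is the exit time of $B_r(x_0)$, gives $\mathbb{E}[\varphi(B^{x_0}_{\tau_r})]>\varphi(x_0)=\phi(x_0)$. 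Since $\varphi\le\phi$ everywhere and $\varphi(x_0)=\phi(x_0)$, this yields $\mathbb{E}[\phi(B^{x_0}_{\tau_r})]\ge\mathbb{E}[\varphi(B^{x_0}_{\tau_r})]>\phi(x_0)$, contradicting \eqref{eqn:superharmonic} with $\tau=\tau_r$. (One subtlety: $\phi$ is only lower semicontinuous, so I should note that lower semicontinuity is enough to make $\mathbb{E}[\phi(B^{x_0}_{\tau_r})]$ well defined as a number in $(-\infty,+\infty]$ via Fatou, and boundedness keeps it finite.)

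For $(1)\Rightarrow(2)$, the strategy is a regularization-and-limit argument. First I would reduce to $y\in O$; the boundary case $y\in\partial O$ follows since then $\tau\le\tau_O$ forces $\tau=0$ (starting on the boundary of an open set, Brownian motion exits immediately), so \eqref{eqn:superharmonic} is trivial, or alternatively by a limiting argument from interior points using lower semicontinuity. For interior $y$, mollify: set $\phi_\varepsilon=\phi*\rho_\varepsilon$ on the open set $O_\varepsilon=\{x\in O:\operatorname{dist}(x,\partial O)>\varepsilon\}$. A viscosity supersolution of $\Delta\phi\le0$ is a distributional (hence, being continuous after mollification, classical) supersolution, so $\phi_\varepsilon$ is a genuine $C^\infty$ superharmonic function on $O_\varepsilon$; alternatively, averaging the viscosity inequality against $\rho_\varepsilon$ directly gives $\Delta\phi_\varepsilon\le0$ pointwise. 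For the smooth function $\phi_\varepsilon$, It\^o's formula applied to $\phi_\varepsilon(B^y_{t\wedge\tau})$ up to a stopping time $\tau\le\tau_{O_\varepsilon}$ shows $t\mapsto\phi_\varepsilon(B^y_{t\wedge\tau})$ is a bounded supermartingale, so optional stopping gives $\phi_\varepsilon(y)\ge\mathbb{E}[\phi_\varepsilon(B^y_\tau)]$. Then I let $\varepsilon\to0$: on the left, $\phi_\varepsilon(y)\to\phi(y)$ at points of continuity — but lower semicontinuity only gives $\liminf_{\varepsilon\to0}\phi_\varepsilon(y)\ge\phi(y)$ is the wrong direction, so I instead use that mollification of an LSC function satisfies $\phi_\varepsilon\le$ (something) — more carefully, I use $\phi_\varepsilon(y)=\int\phi(y-z)\rho_\varepsilon(z)\,dz$ and that, since $\phi$ is a supersolution, the spherical averages of $\phi$ are nonincreasing in radius, which forces $\phi_\varepsilon(y)\to\phi(y)$ monotonically from below as $\varepsilon\downarrow0$ (the mean value characterization of superharmonicity). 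On the right, Fatou's lemma together with lower semicontinuity of $\phi$ and a.s.\ convergence $B^y_{t\wedge\tau}\to B^y_\tau$ handled first for bounded $\tau$, then passing $t\to\infty$ — gives $\liminf_\varepsilon\mathbb{E}[\phi_\varepsilon(B^y_\tau)]\ge\mathbb{E}[\liminf_\varepsilon\phi_\varepsilon(B^y_\tau)]=\mathbb{E}[\phi(B^y_\tau)]$, using again that $\phi_\varepsilon\uparrow\phi$ and boundedness. Finally I remove the restriction $\tau\le\tau_{O_\varepsilon}$ by a further exhaustion: apply the inequality to $\tau\wedge\tau_{O_\varepsilon}$, let $\varepsilon\to0$ so $\tau_{O_\varepsilon}\uparrow\tau_O\ge\tau$, and use lower semicontinuity plus Fatou one more time.

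The main obstacle is the interplay between lower semicontinuity and the two limit passages: one wants $\phi_\varepsilon(y)\to\phi(y)$ and $\mathbb{E}[\phi_\varepsilon(B^y_\tau)]\to\mathbb{E}[\phi(B^y_\tau)]$, but LSC a priori controls only one-sided limits, and the two sides need the inequalities to point the same way to close the argument. The resolution is the mean-value/spherical-average monotonicity built into superharmonicity, which makes $\phi_\varepsilon$ increase to $\phi$ as $\varepsilon\downarrow0$ rather than merely converge; this is exactly what makes both limits cooperate — the left side increases up to $\phi(y)$ and Fatou on the right side goes the correct direction. A secondary technical point is justifying that a viscosity supersolution of the Laplace equation is a distributional supersolution (so that mollification produces a classical supersolution); this is standard but should be cited or sketched, e.g.\ via the equivalence of viscosity and potential-theoretic superharmonicity for the Laplacian.
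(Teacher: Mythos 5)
Your proposal is correct and the overall architecture matches the paper's: for $(2)\Rightarrow(1)$ a touching-test-function argument combined with It\^o's formula, and for $(1)\Rightarrow(2)$ a regularization from below followed by It\^o's formula, Fatou, and lower semicontinuity. Your $(2)\Rightarrow(1)$ is the contrapositive formulation of what the paper does directly and is essentially identical.

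Where you differ from the paper is in the regularization mechanism for $(1)\Rightarrow(2)$. The paper invokes its own Lemma~\ref{lem:viscosity_approximation}, which uses inf-convolutions $h_\epsilon(y)=\inf_z\{h(z)+\tfrac{1}{2\epsilon}|y-z|^2\}$ to produce semi-concave approximants from below that still satisfy the viscosity inequality on a shrunken domain, and then mollifies; this is essentially a self-contained proof that a viscosity supersolution is a distributional supersolution. You instead appeal directly to that equivalence (viscosity $\Leftrightarrow$ potential-theoretic superharmonic for the Laplacian) and mollify $\phi$ by a radial kernel. The inf-convolution route has the advantage of being self-contained and producing approximants with explicitly controlled constants $\delta^i$ (which the paper re-uses elsewhere, e.g.\ in Lemma~\ref{lem:approxBD}). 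Your mollification route buys the extra structure of \emph{monotone} convergence $\phi_\epsilon\uparrow\phi$ via the spherical-average characterization, which is precisely what you correctly identify as resolving the tension with lower semicontinuity and makes both sides of the limiting inequality cooperate; the paper's approximants converge pointwise but not necessarily monotonically, and Fatou alone suffices there. One organizational remark: you tie the mollification parameter and the domain-shrinking parameter together ("apply the inequality to $\tau\wedge\tau_{O_\varepsilon}$, let $\varepsilon\to0$"), whereas the paper cleanly separates the two limits (index $i\to\infty$ at fixed truncation $\tau^\delta$, then $\delta\to 0$). Your joint-limit argument works thanks to the monotonicity of $\phi_\epsilon$ — for $\varepsilon\leq\varepsilon_0$ one has $\phi_\varepsilon\geq\phi_{\varepsilon_0}$, so $\liminf_\varepsilon\phi_\varepsilon(z_\varepsilon)\geq\phi_{\varepsilon_0}(z)$ by continuity of $\phi_{\varepsilon_0}$, then send $\varepsilon_0\to0$ — but you should spell this out, since with a non-monotone approximating sequence the joint limit along a moving point would not go through by Fatou alone.
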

\begin{proof} 
	First we suppose that $\phi\in LSC(\overline{O})$ satisfies $\Delta \phi(x)\leq 0$ in the sense of viscosity for all $x\in O$.  There is then a sequence of smooth functions $\phi^i\in C^\infty(\overline{O})$ with $\phi^i \le \phi$ in $O$, and constants $\delta^i$ such that $\lim_{i\rightarrow \infty} \phi^i(x)=\phi(x)$ for all $x\in \overline{O}$, $\lim_{i\rightarrow \infty}\delta^i= 0$, and $\Delta \phi_i(x)\leq 0$ for $x\in O^{\delta^i}$, where $O^{\delta^i}$ is the set of points in $O$  with distance from $\partial O$ greater than $\delta^i$ (see Lemma \ref{lem:viscosity_approximation}, or similar results in \cite{fukuda1993uniqueness}, \cite{crandall1992user}, \cite{Sylvestre2015}.)

	For $y\in \partial O$, $B^y_\tau=B^y_{\tau_O}=y$ and (\ref{eqn:superharmonic}) holds.  For $y\in O$ and any stopping time $\tau\leq \tau_O$, we fix $\delta>0$ such that $y \in O^\delta$ and set $\tau^\delta = \tau\wedge \tau_{O^\delta}$. Then, by It\^{o}'s Lemma
	\begin{align*}
		\phi (y) =\lim_{i\rightarrow \infty} \phi^i(y) =&\ \lim_{i\rightarrow \infty}\mathbb{E}\Big[ \phi^i (B^y_{\tau^\delta}) -\int_0^{\tau^\delta} \Delta \phi^i(B_t^y)dt\Big]\\
		\ge&\ \lim_{i\rightarrow \infty}\mathbb{E}\big[ \phi^i (B^y_{\tau^\delta})\big]=\mathbb{E}\big[ \phi (B^y_{\tau^\delta}) \big].
	\end{align*} 
	Then taking $\delta\rightarrow 0$ we have
	$$
		\liminf_{\delta\rightarrow 0} \mathbb{E}\big[ \phi (B^y_{\tau^\delta}) \big] \geq \mathbb{E}\big[ \liminf_{\delta\rightarrow 0} \phi (B^y_{\tau^\delta}) \big]\geq \mathbb{E}\big[ \phi (B^y_\tau) \big],
	$$
	by lower semicontinuity of $\phi$, Fatou's lemma, and the continuity of Brownian paths.

	For the other direction (2) $\Rightarrow$ (1), we consider $w\in C^2(\overline{O})$ that touches $\phi$ from below at $y\in O$, i.e., $w(y)=\phi(y)$ and $w(x)\leq \phi(x)$ for all $x\in \overline{O}$.  Then, for all stopping times $\tau\leq \tau_O$, 
	\begin{align*}
		 \mathbb{E}\big[w(B_\tau^y)\big]-w(y)
		\le\ \mathbb{E}\big[\phi(B_\tau^y)\big]-\phi(y)\leq0,
	\end{align*} 
	which implies that $\Delta w(y)\leq 0$ for  $w \in C^2$.  This completes the proof. 
	\end{proof}
	\begin{proof}[\bf Proof of Theorem~\ref{thm:weakdual}]
		We sketch the proof of $\mathcal{P}_c(\mu, \nu) = \mathcal{D}_{c}'(\mu, \nu)$ as very similar results have appeared in \cite{beiglboeck2017optimal} and \cite{GKPS}. We assume $\psi$ and $J$ are continuous unless stated otherwise.\\
The constraint that $\pi\in \mathcal{BM}(\mu,\nu)$, defined in (\ref{eqn:BM}), is equivalent to:
		\begin{itemize}
			\item The measure $\pi$ on $O\times O$ is a finite and nonnegative;
			\item The second marginal of $\pi$ is $\nu$, or
				$$
					0=\sup_{\psi}\Big\{\int_{\R^d}\psi(y)\nu(dy)-\int_{\R^d}\int_{\R^d}\psi(y)\pi(dx,dy)\Big\},
				$$
				\item For each $x$, $\pi_x$ is in subharmonic order with $\delta_x$, and the first marginal is $\mu$, i.e.,
				\begin{align*}
					0=&\ \sup_{J; \Delta_y J\leq 0} \Big\{\int_{\R^d}\int_{\R^d}J(x,y)\pi(dx,dy)-\int_{\R^d}J(x,x)\mu(dx)\Big\}.
				\end{align*}
			\end{itemize}
Thus, we have
 		\begin{align*}
 			& \inf_{\pi\in \mathcal{BM}(\mu,\nu)} \int_{\R^d}\int_{\R^d} c(x,y) \pi(dx,dy)\\
 			&=\  \inf_{\pi\geq 0} \sup_{\psi}\sup_{J;\Delta_yJ\le 0}\Big\{ \int_{\R^d}\int_{\R^d} c(x,y) \pi(dx,dy) + \int_{\R^d} \psi(y) \nu(dy) \\
 			& \quad -\int_{\R^d}\int_{\R^d} \psi(y) \pi(dx,dy) +\int_{\R^d}\int_{\R^d} J(x,y)\pi(dx,dy) -\int_{\R^d} J(x,x)\mu(dx)\Big\},
 		\end{align*}
 		and the Fenchel-Rockafellar duality theorem allows us to interchange the infimum and supremum. This expression becomes
 		\begin{align*}
 			 \mathcal{P}_c(\mu, \nu)=&\ \sup_{\psi}\sup_{J;\Delta_yJ\le 0}\inf_{\pi\geq 0}\Big\{ \int_{\R^d}\int_{\R^d}\big(c(x,y)-\psi(y)+J(x,y)\big) \pi(dx,dy) \\
 		& \qquad 	+ \int_{\R^d} \psi(y) \nu(dy) -\int_{\R^d} J(x,x)\mu(dx)\Big\}\\
 			=&\ \mathcal{D}'_c(\mu,\nu).
 		\end{align*}
 		The attainment of a minimizer $\pi^*$ for $\mathcal{P}_c(\mu,\nu)$ is immediate from the compactness of probability measures in the weak* topology and the definition of $\mathcal{BM}(\mu,\nu)$, which makes it a weak* closed convex set in the space of Radon measures on $\overline{O}\times \overline{O}$.\\
To prove that 
 		$$
 			\mathcal{D}_{c}'(\mu, \nu) = \mathcal{D}_{c}(\mu, \nu):= \sup_{\psi\in LSC(\overline{O})}\Big\{\int_O \psi(y)\nu(dy)-\int_O J_\psi(x,x)\mu(dx)\Big\}, 
 		$$
 		we first show the inequality $\mathcal{D}_{c}'(\mu, \nu) \geq \mathcal{D}_{c}(\mu, \nu)$.  For that, let $\pi^*\in \mathcal{BM}(\mu,\nu)$ be where the infimum of $\mathcal{P}_c(\mu,\nu)$ is attained and its representation by a (randomized) stopping time $\tau^*$, cf.\ Lemma \ref{lem:measure-stoppingtime}. Then, for any $\psi \in LSC(\overline{O})$, 
 		\begin{align*}
 			\int_O \psi(y)\nu(dy)-\int_O J_\psi(x,x)\mu(dx)=&\ \mathbb{E}\big[\psi(B_{\tau^*})-J_\psi(B_0,B_0)\big]
 			\leq\ \mathbb{E}\big[\psi(B_{\tau^*})-J_\psi(B_0,B_{\tau^*})\big]\\
 			\leq&\ \mathbb{E}\big[c(B_{0},B_{\tau^*})\big]=\mathcal{P}_c(\mu, \nu) =\mathcal{D}_c'(\mu,\nu),
 		\end{align*}
 		where we have used the definition (\ref{eqn:J_psi}) of $J_\psi$. Hence $\mathcal{D}_{c}(\mu, \nu)\leq \mathcal{D}_{c}'(\mu, \nu)$. 

 		For the other direction, we consider $(\psi,J)\in \mathcal{A}_c$, then $J_\psi(x,y)\leq J(x,y)$ for $(x,y)\in O\times O$ since 
		by Lemma \ref{lem:superharmonic} and the definitions (\ref{eqn:A_c}) and (\ref{eqn:J_psi}), we have
 		\begin{align*}
 			J_\psi(x,y) =&\ \sup_{\tau\leq \tau_O}\mathbb{E}\big[\psi(B^y_\tau)-c(x,B^y_\tau)\big]
 			\leq\ \sup_{\tau\leq \tau_O}\mathbb{E}\big[J(x,B^y_\tau)\big]\leq J(x,y).
 		\end{align*}
 		It follows that 
 		\begin{align*}
 			\int_O \psi(y)\nu(dy)-\int_O J_\psi(x,x)\mu(dx)\geq&\ \int_O \psi(y)\nu(dy)-\int_O J(x,x)\mu(dx), 
 		\end{align*}
		and $D'_c(\mu,\nu)\leq D_c(\mu,\nu)$. This completes the proof. 
	\end{proof}

\section{A weakly compact set of $D$-superharmonic functions in Sobolev class}\label{sec:BD}

In this section, we introduce the following convex set of functions $\mathcal{B}_D$, which plays a key role in the sequel. These are the lower semicontinuous $D$-superharmonic functions that are non-positive and zero on the boundary of $O$.  A key property will be that these functions can be equivalently defined as members of the Sobolev class $H_0^1(O)$ that are non-positive and have their Laplacian bounded above by $D$ weakly.

\begin{definition}\label{def:normalized}  Let $O$ be a bounded convex open subset of $\mathbb{R}^d$.
 We say that a function $\psi  \in LSC(\overline{O})$  is in the set $\mathcal{B}_D$, if the following properties hold:
\begin{enumerate}
\item $\psi(y)=0$ for all $y\in \partial O$;
 \item $\psi(y)\leq 0$ for all $y\in O$;
 \item $\psi$ is {\em $D$-superharmonic}, in the sense that for all stopping times $\tau\leq \tau_O$,
 \begin{align*}
	\psi(y) \geq   \mathbb{E}\big[ \psi (B^y_\tau) -D\tau\big].
\end{align*}
(Equivalently, $\Delta \psi(y)\leq D$ for all $y\in O$ in the sense of viscosity by Lemma \ref{lem:superharmonic}.)
\end{enumerate}
\end{definition}
\noindent The functions in this class $\mathcal{B}_D$ have a uniform lower bound following from the maximum principle. Indeed,  let  $u_O$ be the solution to 
\begin{align}\label{eqn:u-O}
 \begin{cases}
  \Delta u_O(x)  = 1   & x\in O, \\
   u_O(x) =0   & x\in\partial O.
\end{cases}
\end{align}
Equivalently, $u_O(y)=-\mathbb{E}\big[\tau_O\big]$ where $\tau_O$ is the exit time for the Brownian motion beginning at $B_0^y=y$.
Since $\Delta \psi(y) \le D$,  we see 
 that  
\begin{align}\label{eqn:zero-bound}
&\psi(y) \ge \mathbb{E}\big[\psi(B^y_{\tau_O})-D\tau_O] =Du_O(y)\geq -D\sup_{z\in \overline{O}}|u_O(z)|,
\end{align}
which provides a lower bound depending only on $O$ and $D$.

We now prove a key property of  $\mathcal{B}_D$, namely that it can be identified with a closed convex bounded (hence weakly compact) subset of the Sobolev class $H_0^1(O)$, equipped with 
the norm 
$
	\|f\|_{H^1_0(O)}^2=\int_O |\nabla f(x)|^2dx.
$
\begin{proposition}\label{lem:BD_properties}
	A function $\psi$ is in $\mathcal{B}_D$ if and only if only if --up to a set of Lebesgue measure zero-- it 
	satisfies
	\begin{enumerate}
		\item $\psi\in H_0^1(O)$;
		\item $\psi(y) \leq 0$ for a.e.\ $y\in O$;
		\item $\Delta \psi(y)\leq D$ weakly, in the sense that
		$
			\int_O \big[\nabla \psi(y)\cdot \nabla \phi(x)+D\phi(x)\big]dx\leq 0 
		$
		for all $\phi \in H_0^1(O)$ such that $\phi\leq 0$ a.e. on $O$.
	\end{enumerate}
Furthermore, there is a constant $M$ dependent only on $D$ and $O$ such that 
	\begin{equation}\label{eqn:psi-infty-bound-long}
		\|\psi\|_{H_0^1(O)}\leq M \quad \hbox{for all $\psi\in \mathcal{B}_D$. }
	\end{equation}
In other words, 	 $\mathcal{B}_D$ can be identified with a closed bounded convex subset of $H_0^1(O)$. 
\end{proposition}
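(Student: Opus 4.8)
The plan is to transfer everything to the normalized function $v:=\psi-Du_O$, where $u_O$ solves \eqref{eqn:u-O}; write $g:=-u_O$, so that $g\ge 0$ is continuous on $\overline O$, is smooth and (by the strong maximum principle) strictly positive in $O$, vanishes on $\partial O$, is superharmonic with $\Delta g=-1$, and lies in $H^1_0(O)$. If $\psi\in\mathcal B_D$, then $\psi\le 0$ together with the lower bound \eqref{eqn:zero-bound}, i.e.\ $\psi\ge Du_O$, gives the pointwise sandwich $0\le v\le Dg$ on $O$; moreover $v\in LSC(\overline O)$, $v=0$ on $\partial O$, and $v$ is classically superharmonic by Lemma~\ref{lem:superharmonic}. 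It then suffices to prove the global energy bound $\|\nabla v\|_{L^2(O)}\le D\|u_O\|_{H^1_0(O)}$: this gives $v\in H^1(O)$, and then $0\le v\le Dg$ with $Dg\in H^1_0(O)$ forces $v\in H^1_0(O)$ by the lattice property of $H^1_0$ (approximate $v=\min(v,Dg)$ by $\min(v,w_k)$ with $w_k\in C_c^\infty(O)$, $w_k\to Dg$ in $H^1_0$); consequently $\psi=v+Du_O\in H^1_0(O)$ with $\|\psi\|_{H^1_0(O)}\le 2D\|u_O\|_{H^1_0(O)}=:M$, which is the asserted uniform bound. With $v\in H^1_0(O)$ in hand, properties (2) and (3) for $\psi$ are immediate: $\psi\le 0$ a.e.\ is clear, and for $\phi\in H^1_0(O)$ with $\phi\le 0$ one has $\int_O\big[\nabla\psi\cdot\nabla\phi+D\phi\big]\,dx=\int_O\nabla v\cdot\nabla\phi\,dx=\int_O\phi\,d\mu_v\le 0$, where $\mu_v:=-\Delta v\ge 0$ (using $-\Delta g=1$).

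For the energy bound I would argue by mollification. Let $v_\varepsilon:=v*\rho_\varepsilon$ and $g_\varepsilon:=g*\rho_\varepsilon$ on $O_\varepsilon$; since $v,g$ are superharmonic, $v_\varepsilon\uparrow v$, $g_\varepsilon\uparrow g$, $\Delta v_\varepsilon\le 0$, and $0\le v_\varepsilon\le Dg_\varepsilon\le Dg$. For a cutoff $\zeta\in C_c^\infty(O)$ the function $\zeta^2(v_\varepsilon-Dg_\varepsilon)$ is $\le 0$, so $\int\zeta^2(v_\varepsilon-Dg_\varepsilon)\Delta v_\varepsilon\,dx\ge 0$; a short integration by parts and the Cauchy--Schwarz inequality then give
\[
	\|\zeta\nabla v_\varepsilon\|_{L^2}\ \le\ D\|\nabla g\|_{L^2}+2\big\|\,|v_\varepsilon-Dg_\varepsilon|\,\nabla\zeta\,\big\|_{L^2}\ \le\ D\|u_O\|_{H^1_0(O)}+2D\|g\,\nabla\zeta\|_{L^2}.
\]
Now take $\zeta=\zeta_k:=\chi(kg)$ with $\chi:\mathbb R\to[0,1]$ smooth, $\chi\equiv 0$ on $(-\infty,1]$ and $\chi\equiv 1$ on $[2,\infty)$: then $\zeta_k\in C_c^\infty(O)$ (its support lies in $\{g\ge 1/k\}$, a compact subset of $O$), $\zeta_k\uparrow 1$ pointwise on $O$, and $\nabla\zeta_k=\chi'(kg)\,k\nabla g$ is supported in $\{1/k\le g\le 2/k\}$, so that $\|g\,\nabla\zeta_k\|_{L^2}^2\le C\int_{\{1/k\le g\le 2/k\}}|\nabla g|^2\,dx\to 0$ as $k\to\infty$ (because $g>0$ in $O$ and $\nabla g\in L^2(O)$). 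Letting $\varepsilon\to 0$ (so $\nabla v_\varepsilon\rightharpoonup\nabla v$ in $L^2_{\mathrm{loc}}$, which also shows $v\in H^1_{\mathrm{loc}}(O)$, and using weak lower semicontinuity of the $L^2$ norm) and then $k\to\infty$ (monotone convergence, $\zeta_k\uparrow 1$) yields $\|\nabla v\|_{L^2(O)}\le D\|u_O\|_{H^1_0(O)}$, as required.

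For the converse, suppose $\psi$ agrees a.e.\ with a function satisfying (1)--(3). Then $v:=\psi-Du_O\in H^1_0(O)$ and $\Delta v\le 0$ weakly (this is precisely condition (3), again using $-\Delta g=1$). Testing against $v^-:=\max(-v,0)\in H^1_0(O)$ gives $0\le\int_O\nabla v\cdot\nabla v^-\,dx=-\int_{\{v<0\}}|\nabla v|^2\,dx$, so $v^-$ is a constant and hence $0$, i.e.\ $v\ge 0$ a.e. Let $\bar v$ be the unique superharmonic, lower semicontinuous representative of $v$. By the super-mean-value property $\bar v\ge 0$ everywhere, and the a.e.\ inequality $\bar v\le Dg$ (from $\psi\le 0$) upgrades to $\bar v\le Dg$ everywhere by lower semicontinuity of $\bar v$ and continuity of $g$; thus $\bar v$ is bounded and tends to $0$ at every boundary point. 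Put $\bar\psi:=\bar v+Du_O$. Then $\bar\psi\in LSC(\overline O)$, $\bar\psi=0$ on $\partial O$, $\bar\psi\le 0$ everywhere (a.e.\ plus lower semicontinuity), and $\Delta\bar\psi\le D$ in the viscosity sense; combining Lemma~\ref{lem:superharmonic} for $\bar v$ with It\^o's formula for $Du_O$ gives $\bar\psi(y)\ge\mathbb E\big[\bar\psi(B^y_\tau)-D\tau\big]$ for all $\tau\le\tau_O$. Hence $\bar\psi\in\mathcal B_D$ and $\bar\psi=\psi$ a.e., which is the remaining implication; the bound \eqref{eqn:psi-infty-bound-long} has already been obtained above.

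The hard part is the global energy estimate, i.e.\ controlling $\int_O|\nabla v|^2$ all the way to the boundary: a bounded lower semicontinuous superharmonic function vanishing on $\partial O$ need \emph{not} lie in $H^1_0(O)$, and the estimate genuinely uses the linear boundary decay $0\le v\le Dg\sim D\,\mathrm{dist}(\cdot,\partial O)$ that the sign condition $\psi\le 0$ imposes. The device that makes the Caccioppoli-type computation produce an upper bound on the energy — rather than the mere lower bound that the naive test function $\zeta^2 v_\varepsilon$ gives — is to test against the nonpositive function $\zeta^2(v_\varepsilon-Dg_\varepsilon)$, and then to choose the cutoffs $\zeta_k$ along the level sets of $g=-u_O$ so that the boundary layer is absorbed. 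The remaining ingredients — monotone mollification of superharmonic functions, the existence of a superharmonic representative of a weakly superharmonic distribution, the lattice property of $H^1_0$, and It\^o's formula — are standard and can be invoked directly (or, for the approximation step, via Lemma~\ref{lem:viscosity_approximation}).
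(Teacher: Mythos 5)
Your proof is correct, and it reaches the same conclusion by a genuinely different route for the key energy estimate. The paper normalizes the same way (implicitly, via $u_{\tilde O}$) but obtains the $H^1_0$ bound by extending $\psi$ by zero to a larger convex domain $\tilde O$ and invoking Lemma~\ref{lem:approxBD} to produce smooth approximants that vanish on $\partial\tilde O$, whence integration by parts has no boundary term and the energy bound is immediate; $\psi\in H^1_0(O)$ then follows because the zero-extension is a weak $H^1_0(\tilde O)$ limit. You instead work directly on $O$ with $v=\psi-Du_O\ge 0$, mollify (exploiting the monotone convergence of mollifications of superharmonic functions), and run a Caccioppoli-type estimate using the test function $\zeta^2(v_\varepsilon-Dg_\varepsilon)\le 0$ rather than $\zeta^2 v_\varepsilon$; the crucial trick is choosing $\zeta_k=\chi(k g)$ along the level sets of $g=-u_O$, so that $\|g\,\nabla\zeta_k\|_{L^2}\to 0$ and the boundary layer is absorbed. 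Your approach buys locality and avoids the auxiliary domain and Lemma~\ref{lem:approxBD}; in exchange it relies on more refined elliptic machinery (monotone mollification of superharmonic functions, lattice property of $H^1_0$, integrability of $|\nabla u_O|^2$ on convex domains) and a more delicate cutoff argument. Your converse mirrors the paper's: both pass from the weak characterization back to the LSC superharmonic representative of $v$ and use Lemma~\ref{lem:superharmonic} together with It\^o for $Du_O$. Two minor wording slips worth fixing: the upgrade $\bar v\le Dg$ a.e.\ to everywhere is really via the mean-value characterization of $\bar v$ (not merely lower semicontinuity), and the identity $\int_O\nabla v\cdot\nabla\phi\,dx=\int_O\phi\,d\mu_v$ for $\phi\in H^1_0$ should be justified by first taking $\phi\in C_c^\infty$ with the appropriate sign and passing to the limit; neither affects the validity of the argument.
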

\begin{proof}
First, use Lemma~\ref{lem:approxBD} to fix a bounded  convex domain $\tilde{O}$ containing $O$ such that we can approximate $\psi\in \mathcal{B}_D$ by smooth functions on $\tilde{O}$ that satisfy:  $\lim_{i\rightarrow \infty} \psi^i(x)=\psi(x)$, $\psi^i(x)\leq 0$ and $\Delta\psi^i(x)\leq D$ for $x\in \tilde{O}$, and $\psi^i(y)=0$ for $y\in \partial \tilde{O}$.  Notice that $\psi^i(x)\geq  D u_{\tilde{O}}(x)$ for $x\in \tilde{O}$ for $u_{\tilde{O}}$ defined similarly as  \eqref{eqn:u-O}.

From the weak Laplacian bound with $\phi=\psi^i$, we get 
		\begin{align*}
			\int_{\tilde{O}} |\nabla \psi^i(x)|^2dx  
			 \le  -\int_{\tilde{O}} D \psi^i(x)dx
			 \le  D^2\|u_{\tilde{O}}\|_{L^1(O)}.
		\end{align*}
Hence, there is a subsequence of $\psi^i$ that converges weakly in $H^1(O)$ to an equivalence class of $\psi$ in $H_0^1(O)$ with $\|\psi\|_{H_0^1(O)}^2\leq D^2 \|u_{\tilde{O}}\|_{L^1(\tilde{O})}$. \\
The properties in \textit{(2)} and \textit{(3)} follow as they are stable under weak convergence in $H^1(O)$.

Conversely, if $\psi\in H_0^1(O)$ satisfies $\Delta \psi\leq D$ weakly and $\psi\leq 0$, we can easily check that the extension of the function $\psi-Du_O$ to $\R^d$ by zero is superharmonic in the sense that the average integral 
$$
	r \ \longmapsto \ \frac{1}{|B_r(y)|} \int_{B_r(y)} \big(\psi(z)-Du_O (z)\big)dz
$$
is monotonically decreasing in $r$. It follows  that $\psi$ has a representative $\tilde \psi$ that is lower semicontinuous and $D$-superharmonic in the sense of viscosity. See for instance the notes \cite{Sylvestre2015}.  This representative $\tilde \psi$ is everywhere nonpositive and $\tilde \psi(y)\geq Du_O(y)$ for all $y\in \overline{O}$, thus $\tilde \psi$ is zero on the boundary. We have shown the lower semicontinuous representative $\tilde \psi$ of $\psi$ lies in $\mathcal{B}_D$, completing the proof.
\end{proof}

We now define the superharmonic envelope of a function $h\in LSC(\overline{O})$ to be
\begin{align}\label{eqn:SH}
	h^{SH}(y)=\sup_{\tau\leq \tau_O} \mathbb{E}\big[h(B_\tau^y)\big].
\end{align}
We note that the definition of $J_\psi$ in (\ref{eqn:J_psi}) means that $y\mapsto J_\psi(x,y)$ is the superharmonic envelope of $y\mapsto \psi(y)-c(x,y)$.  
We will require in the sequel a few results on superharmonic envelopes.  

\begin{lemma}\label{lem:variational}
	Given  $\phi\in H_0^1(O)\cap C^2(\overline{O})$, then its superharmonic envelope $\phi^{SH}$ as defined in (\ref{eqn:SH}) is the unique minimizer of the variational problem, i.e.
	$$
		\phi^{SH}={\rm argmin}\Big\{\int_O |\nabla u(y)|^2dy;\ u\in H_0^1(O),\ u\geq \phi\Big\}.
	$$ 
\end{lemma}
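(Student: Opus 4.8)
The plan is to show that $\phi^{SH}$ is the smallest supersolution of the obstacle problem with obstacle $\phi$ and then identify that smallest supersolution with the variational minimizer. First I would record that $\phi^{SH}$ is superharmonic (this is essentially the definition, combined with the tower property: $\phi^{SH}$ is the value function of an optimal stopping problem, hence satisfies the dynamic programming inequality $\phi^{SH}(y)\ge\mathbb{E}[\phi^{SH}(B^y_\sigma)]$ for all $\sigma\le\tau_O$, i.e.\ $\Delta\phi^{SH}\le 0$ in the viscosity sense via Lemma \ref{lem:superharmonic}) and that $\phi^{SH}\ge\phi$ (take $\tau=0$) with $\phi^{SH}-\phi\in H_0^1(O)$ (here one uses $\phi\in H_0^1(O)$, so $\phi=0$ on $\partial O$ and by the strong maximum/comparison principle $\phi^{SH}$ also vanishes on $\partial O$, together with the uniform $H^1$ bound of the type in Proposition \ref{lem:BD_properties} applied after subtracting a smooth function that dominates $\Delta\phi$ to make $\phi^{SH}-(\text{that function})$ land in a set like $\mathcal{B}_D$). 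Thus $\phi^{SH}$ is admissible for the variational problem.

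Next I would prove minimality. Let $u\in H_0^1(O)$ with $u\ge\phi$; I want $\int_O|\nabla u|^2\,dy\ge\int_O|\nabla\phi^{SH}|^2\,dy$. The cleanest route is the classical characterization of the $H_0^1$-projection onto the convex set $K=\{u\in H_0^1(O):u\ge\phi\text{ a.e.}\}$: the minimizer $v$ of the Dirichlet energy over $K$ exists and is unique (strict convexity of $\|\nabla\cdot\|_{L^2}^2$ on the closed convex set $K$, which is nonempty since $\phi\in K$), and is characterized by the variational inequality $\int_O\nabla v\cdot\nabla(w-v)\,dy\ge 0$ for all $w\in K$. This variational inequality says precisely that $\Delta v\le 0$ weakly, that $v\ge\phi$, and that $\Delta v=0$ on $\{v>\phi\}$ — i.e.\ $v$ solves the obstacle problem. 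Then I would show $v=\phi^{SH}$: on one hand $v$ is (a representative of) a superharmonic function lying above $\phi$, so by the optional-stopping/supermartingale property (Lemma \ref{lem:superharmonic}) $v(y)\ge\mathbb{E}[v(B^y_\tau)]\ge\mathbb{E}[\phi(B^y_\tau)]$ for every $\tau\le\tau_O$, hence $v\ge\phi^{SH}$; on the other hand $\phi^{SH}$ is itself superharmonic and $\ge\phi$ with the right boundary values, hence admissible in $K$, and a supersolution-of-obstacle-problem comparison (or: minimality of $v$ among admissible supersolutions, which follows from the variational inequality tested against $w=\phi^{SH}\wedge v$ or directly from uniqueness) gives $v\le\phi^{SH}$. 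Therefore $\phi^{SH}=v$ is the unique variational minimizer.

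The main obstacle I anticipate is the boundary-behavior/regularity bookkeeping needed to be sure $\phi^{SH}-\phi\in H_0^1(O)$ and that $\phi^{SH}$ has the same (zero) trace as the elements of $K$ — one needs that the superharmonic envelope does not "lift off" the boundary, which uses convexity of $O$ (regularity of every boundary point for the Dirichlet problem) together with $\phi\in C^2(\overline O)\cap H_0^1(O)$. A secondary technical point is the identification of the viscosity/probabilistic notion of $D$-superharmonicity with the weak $H^1$ notion, but this is already supplied by Lemma \ref{lem:superharmonic} and Proposition \ref{lem:BD_properties} (after the shift by a smooth function absorbing the bound on $\Delta\phi$), so it should be quotable rather than re-proved. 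Everything else — existence/uniqueness of the projection, the equivalence of the variational inequality with the obstacle problem, and the supermartingale comparison — is standard and I would treat it briskly.
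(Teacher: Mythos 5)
Your plan is essentially sound and shares the paper's two main ingredients: (i) the one‐sided comparison $v\ge\phi^{SH}$ via the supermartingale/optional‐stopping inequality (Lemma \ref{lem:superharmonic}), and (ii) a regularity step placing $\phi^{SH}$ in $H_0^1(O)$ with $\Delta\phi^{SH}\le 0$ weakly, by subtracting a smooth superharmonic majorant of $\phi$ so that the difference lands in some $\mathcal{B}_{D'}$ and Proposition \ref{lem:BD_properties} applies. Where you diverge from the paper is the closing step. You close by showing $v\le\phi^{SH}$ via the ``smallest supersolution above the obstacle'' characterization of the variational minimizer, testing the variational inequality against $w=\phi^{SH}\wedge v$ and combining it with the weak superharmonicity of $\phi^{SH}$ to conclude $(v-\phi^{SH})^+\equiv 0$. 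The paper instead closes with a direct energy estimate: since $\phi^{SH}\le\tilde\phi$ pointwise and $-\Delta\phi^{SH}\ge 0$ weakly, integration by parts and Cauchy--Schwarz give $\|\phi^{SH}\|_{H_0^1}^2\le\int_O\tilde\phi\,(-\Delta\phi^{SH})\le\|\tilde\phi\|_{H_0^1}\|\phi^{SH}\|_{H_0^1}$, so $\phi^{SH}$ is also a minimizer and uniqueness identifies it with $\tilde\phi$. Both routes work; yours invokes the standard obstacle‐problem comparison machinery, while the paper's two-line energy computation avoids having to set up the variational inequality at all and is arguably a bit lighter given what has already been established.

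One small caveat: the throwaway ``or directly from uniqueness'' in your step for $v\le\phi^{SH}$ is circular — uniqueness of the minimizer only identifies $v$ with $\phi^{SH}$ once you already know $\phi^{SH}$ attains the minimal energy, which is exactly what you are trying to prove at that point. Drop that parenthetical and keep the VI‐testing (or supersolution comparison) argument, which is the correct one.
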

\begin{proof}
	Take $\phi\in H_0^1(O)\cap C^2(\overline{O})$ and let $\tilde{\phi}\in H_0^1(O)$ be the unique minimizer of the variational problem; uniqueness follows from the strict convexity of $u \mapsto \int |\nabla u|^2$.  The optimality of $\tilde{\phi}$ implies that $\tilde{\phi}$ satisfies $\Delta \tilde{\phi}\leq 0$ weakly (see \cite{kinderlehrer1980introduction}).  As in the proof of Proposition \ref{lem:BD_properties}, we have that $\tilde{\phi}$ has a lower semicontinuous representative that satisfies $\Delta \tilde{\phi}(x)\leq 0$ for $x\in O$ in the sense of viscosity and $\tilde{\phi}(x)\geq \phi(x)$ for $x\in \overline{O}$.  This implies that $\phi^{SH}(y)\leq \tilde{\phi}(y)$ for all $y\in \overline{O}$ since from Lemma~\ref{lem:superharmonic},
	$$
		\phi^{SH}(y)=\sup_{\tau\leq \tau_O}\mathbb{E}\big[\phi(B_\tau^y)\big]\leq\sup_{\tau\leq \tau_O}\mathbb{E}\big[\tilde{\phi}(B_\tau^y)\big]\leq \tilde{\phi}(y).
	$$
For any smooth superharmonic function $\hat{\phi}\in H_0^1(O)\cap C^2(\overline{O})$ greater than $\phi$, we have  by the same argument as above, $\hat{\phi}\geq \phi^{SH}$. It follows that $\phi^{SH}-\hat{\phi}\in \mathcal{B}_{D'}$ for $D'= \sup_{x\in O} -\Delta \hat{\phi}(x)$.  Use now Proposition \ref{lem:BD_properties} to show that $\phi^{SH}\in H_0^1(O)$ and satisfies $\Delta \phi^{SH}\leq 0$ weakly. It follows that
	$$
		\|\phi^{SH}\|_{H_0^1(O)}^2 = \int_O  \phi^{SH}(y)\big(-\Delta \phi^{SH}(y)\big)dy\leq \int_O  \tilde{\phi}(y)\big(-\Delta \phi^{SH}(y)\big)dy \leq \|\tilde{\phi}\|_{H_0^1(O)}\|\phi^{SH}\|_{H_0^1(O)}.
	$$
	Thus $\phi^{SH}$ is a minimizer of the variational problem, 
	 hence from uniqueness we have $\phi^{SH}=\tilde{\phi}$.  
\end{proof}
We shall need the following property of the norm of $H^{-1}(O)$, which is dual to $H^{1}_0(O)$. 
\begin{lemma}\label{lem:dualnorm}
	Let  $\tau$ and $\sigma$ be two stopping times such that $\tau\leq \sigma\leq \tau_O$, and suppose that $B_\tau \sim \rho\in H^{-1}(O)$.  Then, the distribution $\gamma$ of $B_\sigma$ belongs to $H^{-1}(O)$ and satisfies
	$$
		\|\gamma\|_{H^{-1}(O)}\leq \|\rho\|_{H^{-1}(O)}.
	$$
\end{lemma}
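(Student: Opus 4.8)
The plan is to combine the superharmonic--envelope characterization of Lemma~\ref{lem:variational} with the strong Markov property, in the spirit of the weak duality argument in the proof of Theorem~\ref{thm:weakdual}. First I would unwind the $H^{-1}(O)$ norm: since $H^{-1}(O)=\big(H_0^1(O)\big)^*$ and $C_c^\infty(O)$ is dense in $H_0^1(O)$, it suffices to prove
\[
	\int_O v\,d\gamma\ \le\ \|\rho\|_{H^{-1}(O)}\,\|v\|_{H_0^1(O)}\qquad\text{for every }v\in C_c^\infty(O);
\]
applying this to $\pm v$ then controls $\big|\int_O v\,d\gamma\big|$ and shows at the same time that $\gamma$ extends to a bounded functional on $H_0^1(O)$, i.e.\ $\gamma\in H^{-1}(O)$, of norm at most $\|\rho\|_{H^{-1}(O)}$. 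Since the test functions are compactly supported in $O$, any mass $\rho$ or $\gamma$ places on $\partial O$ is immaterial here.

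Fix $v\in C_c^\infty(O)\subset LSC(\overline O)\cap H_0^1(O)\cap C^2(\overline O)$ and let $v^{SH}$ be its superharmonic envelope \eqref{eqn:SH}. By Lemma~\ref{lem:variational}, $v^{SH}\in H_0^1(O)$ and $v^{SH}$ minimizes $u\mapsto\int_O|\nabla u|^2$ over $\{u\in H_0^1(O):u\ge v\}$; as $v$ is itself admissible in this variational problem, $\|v^{SH}\|_{H_0^1(O)}\le\|v\|_{H_0^1(O)}$ --- this is the step that converts the monotonicity of the stopping into a norm inequality. As in the proof of Lemma~\ref{lem:variational}, $v^{SH}$ has a bounded, lower semicontinuous, superharmonic representative, still denoted $v^{SH}$. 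Next comes the key estimate
\[
	\int_O v\,d\gamma=\mathbb{E}\big[v(B_\sigma)\big]\ \le\ \mathbb{E}\big[v^{SH}(B_\tau)\big]:
\]
conditioning on $\mathcal{F}_\tau$ and applying the strong Markov property, $\sigma-\tau$ is a stopping time for the Brownian motion started at $B_\tau$ that is bounded by its exit time from $O$, so $\mathbb{E}\big[v(B_\sigma)\mid\mathcal{F}_\tau\big]\le\sup_{\theta\le\tau_O}\mathbb{E}^{B_\tau}\big[v(B_\theta)\big]=v^{SH}(B_\tau)$, and taking expectations gives the claim. Finally, since $v^{SH}$ is bounded and Borel and $B_\tau\sim\rho$, we have $\mathbb{E}[v^{SH}(B_\tau)]=\int v^{SH}\,d\rho=\langle\rho,v^{SH}\rangle\le\|\rho\|_{H^{-1}(O)}\|v^{SH}\|_{H_0^1(O)}\le\|\rho\|_{H^{-1}(O)}\|v\|_{H_0^1(O)}$, using Cauchy--Schwarz for the $H^{-1}$--$H_0^1$ pairing and the two preceding displays. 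Taking the supremum over $v$ gives $\|\gamma\|_{H^{-1}(O)}\le\|\rho\|_{H^{-1}(O)}$.

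The point I expect to require the most care is the identification $\int v^{SH}\,d\rho=\langle\rho,v^{SH}\rangle$ of the integral, against $\rho$, of the canonical (lower semicontinuous superharmonic) representative of $v^{SH}$ with the $H^{-1}(O)$--$H_0^1(O)$ duality pairing. This rests on two standard facts of potential theory: a superharmonic function is quasicontinuous, its lower semicontinuous representative being the quasicontinuous one, and a nonnegative measure in $H^{-1}(O)$ charges no set of zero capacity. A route that avoids these representative issues altogether is to argue directly with Green potentials: writing $U^\mu(z)=\int_O G_O(z,w)\,\mu(dw)$ for the Green potential of a positive measure $\mu$ on $O$, superharmonicity of $w\mapsto G_O(z,w)$ together with $\sigma\ge\tau$ gives $U^\gamma\le U^\rho$ pointwise on $O$, and then superharmonicity of $U^\rho$ gives
\[
	\int_O U^\gamma\,d\gamma\ \le\ \int_O U^\rho\,d\gamma=\mathbb{E}\big[U^\rho(B_\sigma)\big]\ \le\ \mathbb{E}\big[U^\rho(B_\tau)\big]=\int_O U^\rho\,d\rho=\|\rho\|_{H^{-1}(O)}^2<\infty;
\]
hence $\gamma$ has finite energy and $\|\gamma\|_{H^{-1}(O)}^2=\int_O U^\gamma\,d\gamma\le\|\rho\|_{H^{-1}(O)}^2$, by the identity $\|\mu\|_{H^{-1}(O)}^2=\int_O U^\mu\,d\mu$ valid for positive measures.
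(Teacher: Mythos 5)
Your main argument is essentially the paper's proof: both use the superharmonic envelope $v^{SH}$, the inequality $\|v^{SH}\|_{H_0^1(O)}\le\|v\|_{H_0^1(O)}$ from Lemma~\ref{lem:variational}, and the comparison $\mathbb{E}[v(B_\sigma)]\le\mathbb{E}[v^{SH}(B_\tau)]$ (the paper phrases this via $v\le v^{SH}$ plus the subharmonic order $\rho\prec_{SH}\gamma$, you phrase it via the strong Markov property, which is the same observation). Your proof is correct; you additionally flag a subtle point that the paper leaves implicit --- the identification of $\int v^{SH}\,d\rho$ with the $H^{-1}$--$H_0^1$ pairing when $v^{SH}$ is only $H_0^1\cap LSC$ --- and the Green-potential argument you sketch is a clean, self-contained alternative that avoids this quasicontinuity issue altogether, which is a genuine improvement over what the paper writes.
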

\begin{proof}
	For each $\phi\in H_0^1(O) \cap C^2(\overline{O})$,  the superharmonic envelope $\phi^{SH}$ satisfies $\| \phi^{SH}\|_{H_0^1(O)} \le \|\phi\|_{H_0^1(O)}$ by Lemma \ref{lem:variational}.
	Therefore, 
	\begin{align*}
		\int_O \phi(y) \gamma(dy) \leq \int_O \phi^{SH}(y)\gamma(dy) \leq \int_O \phi^{SH}(y) \rho(dy) \leq \|\rho\|_{H^{-1}(O)}\|\phi\|_{H_0^1(O)}. 
	\end{align*}
	Here,  the second inequality is due to the subharmonic order $\rho \prec_{SH} \gamma$ and subharmonicity of $-\phi^{SH}$. This proves the lemma as smooth functions are dense in $H_0^1(O)$.
\end{proof}

The next lemma shows  continuity of functions in class $\mathcal{B}_D$ with respect to stopping times.

\begin{lemma} \label{lem:Sobolev_stopping_time_convergence}
	We consider a sequence of randomized stopping times $\xi_i\leq \tau_O$, that converge weakly (in law) to a randomized stopping time $\xi_\infty$.  Then if $h\in \mathcal{B}_D$, we have 
	\begin{equation}\label{eqn:limit}
		\lim_{i \rightarrow \infty}\mathbb{E}\big[h(B_{\xi_{i}})\big]= \mathbb{E}\big[h(B_{\xi_\infty})\big].
	\end{equation}
\end{lemma}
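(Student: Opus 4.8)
The plan is to exploit the two features of the class $\mathcal{B}_D$ that have been set up precisely for this: membership in $H_0^1(O)$ with a uniform norm bound (Proposition \ref{lem:BD_properties}), and the $H^{-1}$-contraction along stopping times (Lemma \ref{lem:dualnorm}). First I would reduce to the case $D=0$, i.e.\ $h$ genuinely superharmonic: write $h(y)=\bigl(h(y)-Du_O(y)\bigr)+Du_O(y)$, where $u_O\in C^\infty$ is the torsion function from \eqref{eqn:u-O}; the first summand is superharmonic and nonpositive, and for the smooth correction $Du_O$ one has $\mathbb{E}[Du_O(B_{\xi_i})]=D u_O(B_0)+D\,\mathbb{E}[\,\int_0^{\xi_i}1\,dt\,]=Du_O(B_0)+D\,\mathbb{E}[\xi_i]$ by It\^o, and $\mathbb{E}[\xi_i]\to \mathbb{E}[\xi_\infty]$ is immediate from weak convergence of the laws on $C(\R^+,\R^d)\times\R^+$ together with the uniform bound $\xi_i\le \tau_O$ (so $\mathbb{E}[\xi_i^2]$ is bounded, giving uniform integrability). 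So it suffices to prove \eqref{eqn:limit} for $h\in\mathcal{B}_0$.

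Next, approximate $h\in\mathcal{B}_0$ in $H_0^1(O)$ by smooth functions. Using Lemma \ref{lem:approxBD} (as in the proof of Proposition \ref{lem:BD_properties}) one obtains $h^k\in C^\infty(\overline{O})$, nonpositive, superharmonic on $O$, vanishing on the boundary of an enlarging domain, with $h^k\to h$ in $H_0^1(O)$ and pointwise; one may also take $h^k$ superharmonic in the strong (classical) sense. For the smooth superharmonic $h^k$, It\^o's lemma on $\xi_i\wedge\tau_{O^\delta}$ followed by $\delta\to0$ (exactly the computation in the proof of Lemma \ref{lem:superharmonic}) plus dominated convergence gives that $\mathbb{E}[h^k(B_{\xi})]$ is a continuous functional of the randomized stopping time $\xi$ for weak convergence — here we genuinely use continuity of the integrand $h^k$ on $\overline{O}$ and the continuity of Brownian paths, so $(\omega,t)\mapsto h^k(B_t(\omega))$ is a bounded continuous function on $C(\R^+,\R^d)\times\R^+$ restricted to the relevant set, and the laws of $\xi_i$ converge weakly. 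Thus $\mathbb{E}[h^k(B_{\xi_i})]\to\mathbb{E}[h^k(B_{\xi_\infty})]$ as $i\to\infty$, for each fixed $k$.

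It remains to control the error $\mathbb{E}[(h-h^k)(B_{\xi})]$ uniformly in $\xi$. Write $\gamma_\xi$ for the law of $B_\xi$ on $O$; since $\xi\le\tau_O$ and $B_0\sim$ (some fixed law, or in fact any starting distribution — apply Lemma \ref{lem:dualnorm} with $\tau=0$, noting $\delta_{B_0}$ paired against $H_0^1$ functions vanishing on $\partial O$ and the starting measure has finite $H^{-1}$ norm since $O$ is bounded), Lemma \ref{lem:dualnorm} gives $\|\gamma_{\xi_i}\|_{H^{-1}(O)}\le C$ with $C$ independent of $i$ and also $\|\gamma_{\xi_\infty}\|_{H^{-1}(O)}\le C$. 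Hence $|\mathbb{E}[(h-h^k)(B_{\xi_i})]| = |\langle \gamma_{\xi_i}, h-h^k\rangle|\le C\|h-h^k\|_{H_0^1(O)}\to0$ as $k\to\infty$, uniformly in $i\in\{1,2,\dots,\infty\}$. Combining: given $\varepsilon>0$, pick $k$ so the error terms are below $\varepsilon$, then pick $i$ large so $|\mathbb{E}[h^k(B_{\xi_i})]-\mathbb{E}[h^k(B_{\xi_\infty})]|<\varepsilon$; a triangle inequality finishes it. The main obstacle is the uniform-in-$\xi$ error estimate, and the clean way through it is precisely the $H^{-1}$-contraction of Lemma \ref{lem:dualnorm} — without it one would be stuck trying to control $\mathbb{E}[(h-h^k)(B_\xi)]$ for a lower-semicontinuous, possibly unbounded-below $h$ against stopping-time distributions that could concentrate where $h-h^k$ is large. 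A minor technical point to handle carefully is that Lemma \ref{lem:dualnorm} and Lemma \ref{lem:variational} are stated for $\phi\in H_0^1(O)\cap C^2(\overline{O})$, so the $H^{-1}$ bound on $\gamma_\xi$ is obtained by density of smooth functions in $H_0^1(O)$, which is exactly how Lemma \ref{lem:dualnorm} is already proved.
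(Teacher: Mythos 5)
Your overall architecture — approximate $h$ by smooth functions in $H_0^1$, pass to the limit in $i$ for the smooth approximant, and control the error term uniformly in $i$ via an $H^{-1}$ duality bound on the laws of $B_{\xi_i}$ — is a natural idea, and the paper does use exactly this $H^{-1}$ mechanism. But there is a genuine gap in your uniform error estimate, at the point where you claim "the starting measure has finite $H^{-1}$ norm since $O$ is bounded." This is false in dimension $d\ge 2$: the Dirac mass $\delta_{y}$ is \emph{not} an element of $H^{-1}(O)=(H_0^1(O))^*$, because the evaluation functional $\phi\mapsto\phi(y)$ is not continuous on $H_0^1(O)$ when $d\ge 2$ (Sobolev embedding does not reach $C(\overline O)$). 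Since this lemma is invoked in the paper (Lemma \ref{lem:SH_hitting_time}) for Brownian motion started at a \emph{fixed} point $B_0^y=y$, you cannot take $\tau=0$ in Lemma \ref{lem:dualnorm}; there is no uniform bound $\|\gamma_{\xi_i}\|_{H^{-1}(O)}\le C$ because $\gamma_{\xi_i}$ can concentrate near $\delta_y$ when $\xi_i$ is close to $0$, and the $H^{-1}$ norm then blows up. The step $|\mathbb{E}[(h-h^k)(B_{\xi_i})]|\le C\|h-h^k\|_{H_0^1(O)}$ therefore does not hold uniformly in $i$, and the triangle-inequality conclusion fails precisely where the stopping times are small.

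The paper's proof contains exactly the missing ingredient: one must split off the short-time contribution. Writing $\mathbb{E}[h(B_{\xi_i})]=\mathbb{E}[h(B_{\xi_i})-h(B_{\xi_i\wedge\delta})]+\mathbb{E}[h(B_{\xi_i\wedge\delta})]$, the second term is controlled pointwise by $D$-superharmonicity alone ($\mathbb{E}[h(B_{\xi_i\wedge\delta})]\le \mathbb{E}[h(B_0)]+D\delta$, with no $H^{-1}$ norm needed); the first term is rewritten as $\mathbb{E}[h(B_{\xi_i\vee\tau_\delta})-h(B_{\tau_\delta})]$ and, crucially, the law $\rho_\delta$ of $B_{\tau_\delta}$ (with $\tau_\delta=\tau_O\wedge\delta$, $\delta>0$) \emph{is} in $H^{-1}(O)$ by comparison with the heat kernel $g_\delta$ at positive time $\delta$. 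It is only after this re-basing at time $\delta$ that the $H^{-1}$ contraction of Lemma \ref{lem:dualnorm} becomes applicable and gives a bound uniform in $i$. Your approach omits this time-splitting and therefore tries to apply the $H^{-1}$ bound at time $0$, which is where it breaks. (As a minor side remark, in your reduction to $D=0$ the summand $h-Du_O$ is nonnegative rather than nonpositive — from $h\ge Du_O$ in \eqref{eqn:zero-bound} — so it does not lie in $\mathcal{B}_0$ as you stated; this is not fatal since superharmonicity is what matters, but it should be corrected.)
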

\begin{proof}
	We first note that 
	\begin{equation}\label{eqn:liminf}
		\liminf_{i \rightarrow \infty}\mathbb{E}\big[h(B_{\xi_{i}})\big]\geq \mathbb{E}\big[h(B_{\xi_\infty})\big]
	\end{equation}
	follows as a consequence of the Portmanteau theorem using the lower semicontinuity of $h$ composed with the continuous paths of Brownian motion. 

To prove the opposite inequality, 
	\begin{equation}\label{eqn:limsup}
		\limsup_{i \rightarrow \infty}\mathbb{E}\big[h(B_{\xi_{i}})\big]\leq \mathbb{E}\big[h(B_{\xi_\infty})\big],
	\end{equation}
	we use that $h\in H_0^1(O)$ to control larger times and that $h$ is $D$-superharmonic to control small times.  We fix $\epsilon>0$ and select $\delta>0$ such that $\delta\leq \frac{\epsilon}{4D}$ and 
	\begin{align*}
		\mathbb{E}\big[h(B_0)\big]\leq \mathbb{E}\big[h(B_{\xi_\infty\wedge \delta})\big]+\frac{\epsilon}{4}.	
	\end{align*}
	Note that the latter is possible by (\ref{eqn:liminf}) since $\lim_{\delta\rightarrow 0}\xi_\infty\wedge \delta=0$.\\
	We decompose the expectation into two pieces,
	\begin{equation}\label{eqn:two_pieces}
		\mathbb{E}\big[h(B_{\xi_{i}})\big] = \mathbb{E}\big[h(B_{\xi_{i}})-h(B_{\xi_{i}\wedge \delta})\big]+\mathbb{E}\big[h(B_{\xi_{i}\wedge \delta})\big].
	\end{equation}
	The second term on the righthand side of (\ref{eqn:two_pieces}) satisfies (because of $D$-superharmonicity of $h$) that 
	$$
		\mathbb{E}\big[h(B_{\xi_{i}\wedge \delta})\big] \leq \mathbb{E}\big[h(B_0)\big]+D\delta \leq \mathbb{E}\big[h(B_{\xi_\infty\wedge \delta})\big]+\frac{\epsilon}{2}.
	$$\\
	We define $\tau_\delta = \tau_O\wedge \delta$.  For the first term on the righthand side of (\ref{eqn:two_pieces}), because $\xi_i$ and $\xi_i\wedge \delta$ coincide on the set where $\xi_i\leq \tau_\delta$, we have
	$$
		 \mathbb{E}\big[h(B_{\xi_{i}})-h(B_{\xi_{i}\wedge \delta})\big]= \mathbb{E}\big[h(B_{\xi_{i}\vee \tau_\delta})-h(B_{\tau_\delta})\big].
	$$
	For $\rho_\delta \sim B_{\tau_\delta}$, the distribution is in $H^{-1}(O)$, by comparison with the heat kernel $g_\delta$ on $\R^d$ since (by restricting to nonnegative test functions $w$ and extending by zero outside of $O$)
	\begin{align*}
		\|\rho_\delta\|_{H^{-1}(O)} =&\  \sup_{w\geq 0,\|w\|_{H^1_0(O)}\leq 1}\mathbb{E}\big[ w(B_{\tau_\delta})\big]\\
		\leq&\ \sup_{w\geq 0, \|w\|_{H^1_0(O)}\leq 1}\mathbb{E}\big[ w(B_{\delta})\big]= \|g_\delta\|_{H^{-1}(O)}=C(\delta).
	\end{align*}
	We have that $\xi_i \vee \tau_\delta \geq \tau_\delta$, and thus, by Lemma \ref{lem:dualnorm}, for $\rho_i \sim B_{\xi_i\vee \tau_\delta}$ and $\rho_\infty\sim  B_{\xi_\infty\vee \tau_\delta}$,
	$$
		\|\rho_i\|_{H^{-1}(O)}\leq C(\delta),\ \ \ \|\rho_\infty\|_{H^{-1}(O)}\leq C(\delta).
	$$
	Then weak convergence of $\xi_i$ implies $\rho_i\rightharpoonup \rho_\infty$ in $H^{-1}(O)$, and there is $I$ such that for $i\geq I$
	$$
		\mathbb{E}\big[h(B_{\xi_{i}\vee \tau_\delta})\big]\leq \mathbb{E}\big[h(B_{\xi_{\infty}\vee \tau_\delta})\big]+\frac{\epsilon}{2}.
	$$
Putting everything together we have that
	\begin{align*}
		 \mathbb{E}\big[h(B_{\xi_{i}})-h(B_{\xi_{i}\wedge \delta})\big]=&\ \mathbb{E}\big[h(B_{\xi_{i}\vee \tau_\delta})-h(B_{\tau_\delta})\big]\\
		 \leq&\ \mathbb{E}\big[h(B_{\xi_{\infty}\vee \tau_\delta})-h(B_{\tau_\delta})\big]+\frac{\epsilon}{2}\\
		 =&\ \mathbb{E}\big[h(B_{\xi_{\infty}})-h(B_{\xi_\infty\wedge \delta})\big] +\frac{\epsilon}{2}.
	\end{align*}
	Thus we have shown that for $i\geq I$,
		$\mathbb{E}\big[h(B_{\xi_{i}})\big]\leq \mathbb{E}\big[h(B_{\xi_{\infty}})\big]+\epsilon,$
	which implies (\ref{eqn:limsup}) and completes the proof of (\ref{eqn:limit}).
\end{proof}

The above property of $\mathcal{B}_D$ enables us to prove the following lemma regarding verification of  hitting times. We also include the case for $C(\overline{O})$. 
\begin{lemma} \label{lem:SH_hitting_time}
	Assume that either $h\in C(\overline{O})$ or that $h\in \mathcal{B}_D$, and let $h^{SH}$ be its superharmonic envelope defined in (\ref{eqn:SH}). 
	For fixed $y\in \overline{O}$, we let 
	$$
		\eta=\inf\{t;\ h^{SH}(B^y_t)=h(B^y_t)\}.
	$$ 
	The stopping time $\eta$  attains the supremum in the definition of $h^{SH}$. It also satisfies
	\begin{enumerate}
		\item $h(B^y_{\eta})=h^{SH}(B^y_{\eta})$,  
		\item for any stopping time $\sigma\leq \tau_O$, we have
	$
		h^{SH}(y)=\mathbb{E}[h^{SH}(B^y_{\sigma \wedge\eta})].
	$
	\end{enumerate}
\end{lemma}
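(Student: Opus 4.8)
The plan is to treat $h^{SH}$ as the value function of an optimal stopping problem and show that the obstacle set $\{h^{SH} = h\}$ furnishes the optimal stopping region, so that $\eta$ is optimal. First I would record the standard supermartingale structure: since $h^{SH}$ is superharmonic (for the $C(\overline O)$ case this is classical; for $h\in\mathcal B_D$ it follows from Lemma \ref{lem:variational} and Proposition \ref{lem:BD_properties}, which give $h^{SH}\in H_0^1(O)$ with $\Delta h^{SH}\le 0$ weakly, hence lower semicontinuous and superharmonic in the viscosity sense by Lemma \ref{lem:superharmonic}), the process $t\mapsto h^{SH}(B^y_{t\wedge\tau_O})$ is a supermartingale. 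Consequently for any $\sigma\le\tau_O$ one has $h^{SH}(y)\ge \mathbb E[h^{SH}(B^y_\sigma)]\ge \mathbb E[h(B^y_\sigma)]$, so no stopping time can beat $h^{SH}(y)$; it remains to show $\eta$ attains it, which gives optimality, part (1), and part (2) simultaneously.

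For part (1): on $\{\eta<\tau_O\}$ the identity $h(B^y_\eta)=h^{SH}(B^y_\eta)$ holds because $\eta$ is the first entrance time of the Brownian path into the closed set $\{h^{SH}=h\}$ — here I use that $h^{SH}-h$ is lower semicontinuous (it is, being l.s.c.\ minus continuous in the $C(\overline O)$ case, and in the $\mathcal B_D$ case one must be slightly more careful since $h$ itself is only l.s.c., so I would instead argue via the supermartingale: $h^{SH}(B^y_{t\wedge\eta})$ is a supermartingale that is actually a \emph{martingale} up to $\eta$, using that $h^{SH}$ is harmonic on the open set $\{h^{SH}>h\}$ — this is the last line of the obstacle-problem characterization recalled in the introduction — and then passing to the limit $t\uparrow\eta$). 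On $\{\eta=\tau_O\}$ we have $B^y_\eta\in\partial O$ and there $h^{SH}=h$ by definition of the envelope (the supremum over $\tau\le\tau_O$ includes $\tau\equiv 0$, and on $\partial O$ every path is instantly stopped).

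For part (2) and the attainment: the key is the \emph{martingale} property of $t\mapsto h^{SH}(B^y_{t\wedge\eta})$ on the continuation region, i.e.\ that stopping at $\eta$ loses nothing. Given this, for any $\sigma\le\tau_O$ the optional stopping theorem applied to this martingale at the bounded stopping times $\sigma\wedge\eta\wedge\tau_{O^\delta}$ and letting $\delta\to 0$ yields $h^{SH}(y)=\mathbb E[h^{SH}(B^y_{\sigma\wedge\eta})]$; taking $\sigma=\tau_O$ and combining with part (1) gives $h^{SH}(y)=\mathbb E[h^{SH}(B^y_\eta)]=\mathbb E[h(B^y_\eta)]$, i.e.\ $\eta$ attains the supremum. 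The passage to the limit in $\delta$ requires an integrability/uniform-continuity-along-paths input: in the $C(\overline O)$ case boundedness of $h^{SH}$ suffices with dominated convergence; in the $\mathcal B_D$ case I would invoke Lemma \ref{lem:Sobolev_stopping_time_convergence} applied to the stopping times $\sigma\wedge\eta\wedge\tau_{O^\delta}$, which converge weakly to $\sigma\wedge\eta$ as $\delta\to 0$, to transfer the equality. The main obstacle is exactly this last technical point — justifying that no mass is lost as the path approaches $\partial O$ when $h$ is merely l.s.c.\ and in $H_0^1(O)$ rather than continuous — and the cleanest route is to lean on the continuity-along-stopping-times property (Lemma \ref{lem:Sobolev_stopping_time_convergence}) rather than re-proving a tailored dominated-convergence statement.
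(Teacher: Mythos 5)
Your proposal takes a genuinely different route from the paper's, and the difference matters. The paper never invokes harmonicity of $h^{SH}$ on the continuation region and never treats $\eta$ as the first entrance time into a closed set. Instead it uses the Dellacherie--Meyer section theorem to manufacture stopping times $\eta^i$ at which $h(B^y_{\eta^i})=h^{SH}(B^y_{\eta^i})$ holds exactly and which converge weakly to $\eta$; it then extracts by compactness a randomized optimal $\tau$ attaining the supremum, shows $h(B^y_\tau)=h^{SH}(B^y_\tau)$ a.s.\ (hence $\tau\geq\eta$), and deduces that $\eta$ attains the supremum using only superharmonicity of $h^{SH}$ together with the $\eta^i\to\eta$ limit. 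Parts (1) and (2) then fall out from superharmonicity alone, with no reference to the structure of the contact or continuation sets.

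The gap in your plan lies in the $\mathcal{B}_D$ case, exactly at the point you flag as ``the main obstacle'' but do not resolve. When $h\in\mathcal{B}_D$, both $h$ and $h^{SH}$ are only lower semicontinuous, so $h^{SH}-h$ has no useful semicontinuity. Consequently $\{h^{SH}>h\}$ is not known to be open, $\{h^{SH}=h\}$ is not known to be closed, and neither the first-entrance argument for part (1) nor the assertion that $h^{SH}$ is harmonic on $\{h^{SH}>h\}$ (hence that $t\mapsto h^{SH}(B^y_{t\wedge\eta})$ is a martingale) is available. The obstacle-problem characterization you cite from the introduction is stated there only ``under some regularity assumptions'' and for $J_\psi$, not for $h^{SH}$ built from an $H^1_0$, merely l.s.c.\ obstacle. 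Invoking Lemma \ref{lem:Sobolev_stopping_time_convergence} to pass $\delta\to 0$ does not repair this, because the martingale identity you are trying to transport has not been established in the first place. For $h\in C(\overline O)$ your argument is essentially sound (there $h^{SH}-h$ is l.s.c., the contact set is closed, and the classical obstacle-problem regularity gives continuity of $h^{SH}$ and harmonicity off the contact set), but even there you are relying on inputs the paper deliberately avoids: the section-theorem mechanism is precisely what lets the paper reach stopping times landing in the contact set without needing to know its topology or the regularity of $h^{SH}$.
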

\begin{proof} 
{\bf Case $h \in C(\overline{O})$:}
	Let us first show that there exist stopping times $\eta^i\leq \tau_O$ such that $h(B^y_{\eta_i})=h^{SH}(B^y_{\eta^i})$ and $\eta^i\rightarrow \eta$ weakly.
	Define for each $\epsilon >0$ the set 
	$$
		H_\epsilon=\{(t,\omega); h^{SH}(B_t (\omega))-h(B_t(\omega))=0,\ t<\eta+\epsilon\},
	$$
where each $\omega$ is a point in the probability space $\Omega$. Note that $h^{SH}=h=0$ along $\partial O$, therefore, for almost every $\omega$, there exists $t$ with $(t,\omega)\in H_\epsilon$ by the definition of $\eta$, thus the projection of $H_\epsilon$ on $\Omega$ has full measure.  By the `section theorem' in \cite{Dellacherie-Meyer-78} 
	 there exists a stopping time $\eta_\epsilon$ such that $(\eta_\epsilon(\omega),\omega)\in H_\epsilon$ whenever $\eta_\epsilon(\omega)<\infty$, and $\mathbb{P}[\eta_\epsilon<\infty]\geq 1-\epsilon$. Given a sequence $\epsilon^i$ converging to zero, we define $\eta^i: =\eta_{\epsilon^i} \wedge \tau_O$ 
	 which has a subsequence converging weakly as desired.\\
Using the continuity of $h$ and $h^{SH}$ and the continuity of Brownian paths, we have 
	$$
		\mathbb{E}[h^{SH}(B^y_{\eta})]=\lim_{i\rightarrow \infty}\mathbb{E}[h^{SH}(B^y_{\eta^i})]=\lim_{i\rightarrow \infty}\mathbb{E}[h(B^y_{\eta^i})]=\mathbb{E}[h(B^y_{\eta})].
	$$
The supremum of definition (\ref{eqn:SH}) is attained at a randomized stopping time $\tau$ by compactness \cite{edgar1982compactness}.
	Optimality of $\tau$ implies that 
	$$
		h^{SH}(y)=\mathbb{E}[h(B^y_{\tau})]=\sup_{\tau\leq \sigma\leq \tau_O}\mathbb{E}\big[h(B^y_{\sigma})]=\mathbb{E}\big[h^{SH}(B^y_{\tau})\big].
	$$
	Since $h^{SH}(B_\tau^y)\geq h(B_\tau^y)$, we have $h(B^y_{\tau})=h^{SH}(B^y_\tau)$ almost surely so $\tau\geq \eta$. Furthermore, $h^{SH}$ is superharmonic so we have
	$$
		\mathbb{E}[h^{SH}(B^y_\tau)]\leq\mathbb{E}[h^{SH}(B^y_{\eta})],
	$$
	which implies $h^{SH}(y)=\mathbb{E}[h(B^y_{\eta})]$.  Thus $\eta$ also attains the supremum of (\ref{eqn:SH}), which again implies that
	$$
		h(B^y_{\eta}) = \sup_{\eta\leq \sigma\leq \tau_O}\mathbb{E}\big[h(B^y_{\sigma})\, |\, B_\eta^y]=h^{SH}(B_\eta^y),
	$$  
	proving  \textit{(1)}. \\
Finally, for any stopping time $\sigma\leq \tau_O$, we have from the superharmonic property
	$
		h^{SH}(y)\geq \mathbb{E}[h^{SH}(B^y_{\sigma \wedge \eta})]\geq \mathbb{E}[h^{SH}(B^y_{\eta})]=h^{SH}(y),
	$
	which implies \textit{(2)} and completes the proof in the case where $h$ is continuous.

\noindent {\bf Case $h \in \mathcal{B}_D$:} 	
	If now $h$ is in $\mathcal{B}_D$, the above proof is still valid thanks to Lemma \ref{lem:Sobolev_stopping_time_convergence}. This can be used to obtain an optimal stopping time $\tau$ that maximizes
	$$
		\sup_{\tau}\mathbb{E}\big[h(B_\tau^y)\big],
	$$
	since the expectation of $h$ is then a weakly continuous function of the stopping times. The same lemma can also be used to carry on the limit of $(\eta^i)_i$ to $\eta$ in the above proof. \end{proof}

\section{Dual attainment in Sobolev class}\label{sec:dual_attainment}

In this section we prove one of the main results of the paper, namely, the attainment of the supremum in the dual problem.  This has been an elusive problem, which has previously only been fully resolved for $d=1$ in \cite{BLO-dual2017}. Recall that we assume  ${\rm supp\,}\mu$ and ${\rm supp\,}\nu$ are contained in a bounded convex open set $O \subset \mathbb{R}^d$, and by Theorem \ref{thm:weakdual}, we have the dual problem in the form
\begin{align*}
 \mathcal{D}_{c}(\mu, \nu) = \sup_{\psi  \in LSC(\overline{O})} \Big\{  \int \psi (y)\nu(dy)  - \int J_\psi (x, x) \mu(dx) 
\Big\}. 
\end{align*}
We now state our main result on dual attainment. 
\begin{theorem}\label{thm:strong_dual}
	Assume that $c\in C(\overline{O}\times \overline{O})$ and $y \mapsto c(x, y)$ is subharmonic and $D$-superharmonic, i.e.,  $0\leq \Delta_y c (x, y) \leq D$ in the sense of viscosity,  and that $\mu\prec_{SH} \nu$ as well as  $\mu\in H^{-1} (O)$.   There exists then $\psi^*  \in  \mathcal{B}_D$ that attains the maximum value of the dual problem, i.e., 
 	\begin{align}\label{eqn:int-limit}
		\mathcal{D}_{c} (\mu, \nu)= \int_O \psi^* (y) \nu(dy) - \int_O J_{\psi^*} (x,x) \mu(dx).
	\end{align}
\end{theorem}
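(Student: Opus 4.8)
The strategy is to show that the supremum defining $\mathcal{D}_c(\mu,\nu)$ can be restricted to the set $\mathcal{B}_D$, and then invoke the weak compactness of $\mathcal{B}_D$ in $H_0^1(O)$ (Proposition~\ref{lem:BD_properties}) together with appropriate continuity of the functional $\psi\mapsto \int_O\psi\,d\nu-\int_O J_\psi(x,x)\,d\mu$ along a maximizing sequence. The first step is a \emph{normalization/reduction}: given any $\psi\in LSC(\overline O)$, I want to replace it by a function in $\mathcal{B}_D$ without decreasing the dual objective. The natural candidate is to subtract off boundary values and the relevant harmonic/obstacle correction: for instance, set $\tilde\psi(y) = \big(\psi - c(x_0,\cdot)\big)$-type modifications, or more robustly, observe that $J_\psi(x,y) = (\psi - c(x,\cdot))^{SH}(y)$ and that adding a harmonic function $H$ to $\psi$ changes both $\int\psi\,d\nu$ and $\int J_\psi(x,x)\,d\mu$ by $\int H\,d\nu - \int H\,d\mu$, which is $\le 0$ under $\mu\prec_{SH}\nu$ when $H$ is chosen superharmonic... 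Here I would use the convex-order hypothesis to subtract the harmonic extension of the boundary data of $\psi$, reducing to $\psi$ with $\psi|_{\partial O}=0$; then truncate at $0$ from above (replacing $\psi$ by $\min(\psi,0)$ only increases $\nu$-mass appropriately while $J$ behaves monotonically) — this requires care and is where subharmonicity of $c(x,\cdot)$ enters, since $J_\psi$ inherits a $D$-superharmonicity bound: $\Delta_y J_\psi \le \Delta_y(\text{obstacle}) = -\Delta_y c \cdot \mathbf{1}_{\{J_\psi = \psi-c\}} \le 0$, and on the non-contact set $\Delta_y J_\psi=0$, while the relevant one-dimensional-in-$x$ bound $\Delta_x J_{\psi}(x,x)$ or the bound making $\psi^*\in\mathcal{B}_D$ comes from $0\le\Delta_y c\le D$.

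The second step is to take a maximizing sequence $\psi^i\in\mathcal{B}_D$, extract (by Proposition~\ref{lem:BD_properties}) a subsequence converging weakly in $H_0^1(O)$ to some $\psi^*\in\mathcal{B}_D$, and pass to the limit in the objective. The term $\int_O\psi^i\,d\nu$ converges because $\nu\in H^{-1}(O)$ (which follows from $\mu\prec_{SH}\nu$, $\mu\in H^{-1}(O)$ and Lemma~\ref{lem:dualnorm}), so it is a continuous linear functional on $H_0^1(O)$. The term $\int_O J_{\psi^i}(x,x)\,d\mu$ is the delicate one: I would use the hitting-time representation from Lemma~\ref{lem:SH_hitting_time} to write $J_{\psi^i}(x,x) = \mathbb{E}[\psi^i(B^x_{\eta_i}) - c(x,B^x_{\eta_i})]$ for the optimal hitting time $\eta_i$, extract weak limits of the (randomized) stopping times (using compactness of randomized stopping times), and apply Lemma~\ref{lem:Sobolev_stopping_time_convergence} — which is precisely the statement that expectations of $\mathcal{B}_D$-functions are continuous along weakly converging stopping times — to handle the $\psi^i$ part, together with continuity of $c$ for the $c(x,\cdot)$ part. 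One needs a uniform-in-$x$ / $\mu$-integrable domination to pass the limit under $\int\!\cdot\,d\mu$; the uniform lower bound \eqref{eqn:zero-bound} on $\mathcal{B}_D$ and boundedness of $c$ on $\overline O\times\overline O$ supply this, and upper semicontinuity of $x\mapsto J_{\psi^i}(x,x)$ gives one inequality while the maximizing-sequence property gives the other.

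I expect the \textbf{main obstacle} to be the limit of $\int_O J_{\psi^i}(x,x)\,d\mu$: both the inner variable (the stopping time, which lives in a path space) and the outer variable $x$ vary simultaneously, and $J_{\psi^i}$ depends on $\psi^i$ only through weak $H_0^1$ convergence, so pointwise convergence of $J_{\psi^i}(x,\cdot)$ is not automatic. The resolution is to organize the argument so that for $\mu$-a.e.\ fixed $x$ one applies Lemma~\ref{lem:Sobolev_stopping_time_convergence} (after choosing optimal hitting times and passing to weakly convergent subsequences of them), obtaining $\limsup_i J_{\psi^i}(x,x) \le J_{\psi^*}(x,x)$ in one direction via the envelope's maximality and $\liminf$ via testing $\psi^*$'s optimal stopping time against $\psi^i$; combining with the reverse estimate coming from optimality of $\psi^*$ as a limit of a maximizing sequence pins down equality \eqref{eqn:int-limit}. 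A subsidiary technical point is justifying that $\nu\in H^{-1}(O)$ and that the reduction in Step~1 genuinely lands in $\mathcal{B}_D$ while being objective-non-decreasing; I would dispatch these using Lemma~\ref{lem:dualnorm} and the viscosity/weak characterization of $D$-superharmonicity in Proposition~\ref{lem:BD_properties}, respectively.
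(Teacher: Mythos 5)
Your high-level architecture is right: reduce to $\mathcal{B}_D$, then exploit weak compactness in $H_0^1(O)$. But both steps are sketched in a way that does not close.

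\textbf{Step 1 (reduction to $\mathcal{B}_D$) has a genuine gap.} You propose subtracting the harmonic extension of the boundary data (to get $\psi|_{\partial O}=0$) and then truncating at $0$. Neither operation preserves the dual objective in an obvious way. Subtracting a \emph{harmonic} function leaves the dual value unchanged, as you note, but does not yield $\psi\le0$; and truncating to $\min(\psi,0)$ changes $J_\psi$ in an uncontrolled fashion. The paper performs two very specific improvements. First, replace $\psi$ by $\psi-\psi^{SH}$ (its superharmonic envelope, not a harmonic extension). This subtracts a superharmonic function, so $\mu\prec_{SH}\nu$ gives the inequality $\int\psi^{SH}\,d\nu\le\int\psi^{SH}\,d\mu$ in the right direction, and $\psi-\psi^{SH}\le 0$ with equality on $\partial O$ is automatic. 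The nontrivial point is Proposition~\ref{prop:equal}: $J_{\psi-\psi^{SH}}=J_\psi-\psi^{SH}$, which is exactly where the hypothesis $\Delta_y c\ge 0$ enters (via an estimate on the hitting time to the contact set of $\psi^{SH}$; it is not true for general $c$). Second, increase $\psi-\psi^{SH}$ to $\bar\psi(y)=\inf_{z\in\overline O}\{J_{\psi-\psi^{SH}}(z,y)+c(z,y)\}$; since $J_{\psi-\psi^{SH}}(z,\cdot)$ is superharmonic and $c(z,\cdot)$ is $D$-superharmonic, the infimum is $D$-superharmonic (Lemma~\ref{lem:psi_semi_superharmonic}), and $J_{\bar\psi}=J_{\psi-\psi^{SH}}$. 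This is where the bound $\Delta_y c\le D$ is used; your claim that ``$\Delta_y J_\psi\le\Delta_y(\text{obstacle})=-\Delta_y c\cdot\mathbf 1_{\{J_\psi=\psi-c\}}\le 0$'' is both beside the point (we need $\Delta\bar\psi\le D$, not $\Delta J_\psi\le 0$) and not a correct way to use the subharmonicity assumption.

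\textbf{Step 2 is over-engineered, and the resolution you offer for your self-identified ``main obstacle'' is not complete.} You try to pass to the limit in $\int J_{\psi^i}(x,x)\,d\mu$ by choosing optimal hitting times $\eta_i$, extracting weak limits, and applying Lemma~\ref{lem:Sobolev_stopping_time_convergence}; but that lemma is stated for a \emph{fixed} function $h\in\mathcal{B}_D$ and varying stopping times, whereas here \emph{both} $\psi^i$ and $\eta_i$ vary. You acknowledge this difficulty and propose a $\limsup$/$\liminf$ argument, but the $\limsup$ direction (the one you need) is precisely what does not follow from the tools you cite. Moreover you do not actually need the full convergence: weak \emph{lower semicontinuity} of $\psi\mapsto\int J_\psi(x,x)\,d\mu$ suffices, and the paper establishes it in one move (Lemma~\ref{lem:forlimits}): after exchanging the $x$-integration with the supremum over stopping times, one has
\[
\int_O J_\psi(x,x)\,\mu(dx)=\sup_{\tau\le\tau_O}\mathbb E\big[\psi(B_\tau)-c(B_0,B_\tau)\big],
\]
and for each fixed $\tau$, the map $\psi\mapsto\mathbb E[\psi(B_\tau)-c(B_0,B_\tau)]$ is a continuous linear functional on $H_0^1(O)$ because $B_\tau\sim\rho\in H^{-1}(O)$ (Lemma~\ref{lem:dualnorm}). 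A supremum of continuous linear functionals is convex and weakly lower semicontinuous; no hitting-time apparatus, no domination argument, and no simultaneous passage to the limit in stopping times is required.
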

\begin{remark}\label{rmk:makingsubharmonic}
As long as $\Delta_y c(x,y)\geq -M$, we can form a problem with subharmonic cost $\tilde{c}(x,y)=c(x,y)+h(y)$, with $h$ solving $\Delta h=M$.  The solutions to these two problems are equivalent in the sense that if $\tilde{\psi}$ is an optimizer for the cost $\tilde{c}$, then $\tilde{\psi}-h$ is an optimizer for the cost $c$.

Furthermore, for costs of the form $c(x,y)=|x-y|^\alpha$ where $\alpha>0$ and $d\geq 2$, a version of Theorem 4.1 applies with the additional assumption that the support of $\mu$ and $\nu$ are disjoint.  The argument is given in Theorem \ref{thm:distance-unique} for the case of the distance function, i.e.\ $\alpha=1$. 

 The subharmonicity condition $\Delta_y c(x, y) \ge 0$ is, however, more essential than the $D$-superharmonic property.
 For example, we have a counterexample to dual attainment in \cite{beiglbock2013model} (see also \cite{ghoussoub2015structure})  for the cost $c(x,y) = -|x-y|$. 
 \end{remark}
Before we prove the Theorem \ref{thm:strong_dual}, we prove a few lemmata that will allow us to utilize the weak compactness of the set $\mathcal{B}_D$ in $H^1_0$ (Proposition \ref{lem:BD_properties}) for the proof of dual attainment.

\begin{lemma}\label{lem:forlimits}
	If $\mu$ is a probability measure on $O$ and $\mu \in H^{-1}(O)$, then the map  $\psi \in H^1_0(O)\mapsto  \int_O J_{\psi}  (x,x) \mu (dx) $ is convex and  lower semicontinuous on $\mathcal{B}_D$.  	
\end{lemma}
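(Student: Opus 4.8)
\textbf{Proof proposal for Lemma \ref{lem:forlimits}.}

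The plan is to handle convexity and lower semicontinuity separately, both by exploiting the variational/probabilistic characterization of $J_\psi$ as a supremum. For convexity, I would fix $x$ and observe from (\ref{eqn:J_psi}) that $\psi \mapsto J_\psi(x,x) = \sup_{\tau \le \tau_O} \mathbb{E}\big[\psi(B^x_\tau) - c(x,B^x_\tau)\big]$ is a pointwise supremum of affine functions of $\psi$, hence convex in $\psi$. Integrating against $\mu$ preserves convexity, so $\psi \mapsto \int_O J_\psi(x,x)\,\mu(dx)$ is convex. (One should note that $\int_O J_\psi(x,x)\,\mu(dx)$ is well-defined and finite for $\psi \in \mathcal{B}_D$: the functions in $\mathcal{B}_D$ are uniformly bounded by (\ref{eqn:zero-bound}) and $c$ is bounded on $\overline{O}\times\overline{O}$, so $J_\psi$ is uniformly bounded; the measurability in $x$ follows since $J_\psi$ is a viscosity solution of the obstacle problem, hence continuous in $(x,y)$, as recalled after (\ref{eqn:J-psi}).)

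For lower semicontinuity on $\mathcal{B}_D$ — where, by Proposition \ref{lem:BD_properties}, the relevant topology is the weak topology of $H_0^1(O)$, equivalently (on the bounded set $\mathcal{B}_D$) the $L^2(O)$ topology — I would take $\psi^i \rightharpoonup \psi$ in $H_0^1(O)$ with all $\psi^i, \psi \in \mathcal{B}_D$ and show $\liminf_i \int_O J_{\psi^i}(x,x)\,\mu(dx) \ge \int_O J_\psi(x,x)\,\mu(dx)$. The natural route is: pick a nearly-optimal stopping time $\tau$ (depending on $x$) for $J_\psi(x,x)$, so that $J_\psi(x,x) \le \mathbb{E}\big[\psi(B^x_\tau) - c(x,B^x_\tau)\big] + \epsilon$, and then compare with $J_{\psi^i}(x,x) \ge \mathbb{E}\big[\psi^i(B^x_\tau) - c(x,B^x_\tau)\big]$ using the \emph{same} $\tau$. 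This reduces matters to controlling $\mathbb{E}\big[(\psi^i - \psi)(B^x_\tau)\big]$. Here is where the hypothesis $\mu \in H^{-1}(O)$ enters: if $B_0 \sim \mu$ and we use the optimal $\tau$ for each starting point, the law $\rho$ of $B_\tau$ satisfies $\mu \prec_{SH} \rho$, and by (the argument of) Lemma \ref{lem:dualnorm}, $\rho \in H^{-1}(O)$ with $\|\rho\|_{H^{-1}(O)} \le \|\mu\|_{H^{-1}(O)}$. Then $\int_O \mathbb{E}\big[(\psi^i-\psi)(B^x_\tau)\big]\mu(dx) = \int_O (\psi^i - \psi)(y)\,\rho(dy) \to 0$ because $\psi^i - \psi \rightharpoonup 0$ in $H_0^1(O)$ and $\rho \in H^{-1}(O)$. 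Combining, $\liminf_i \int_O J_{\psi^i}(x,x)\mu(dx) \ge \int_O J_\psi(x,x)\mu(dx) - \epsilon$ for every $\epsilon > 0$.

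The main obstacle is the measurable-selection issue in the lower-semicontinuity argument: one needs the near-optimal stopping time $\tau = \tau_\epsilon$ to depend measurably on the starting point $x$ so that the law $\rho$ of $B_\tau$ (with $B_0 \sim \mu$) is a bona fide probability measure to which Lemma \ref{lem:dualnorm} applies. I would sidestep this by using Lemma \ref{lem:SH_hitting_time} applied to $h = \psi - c(x,\cdot)$: the hitting time $\eta_x = \inf\{t : J_\psi(x, B^x_t) = \psi(B^x_t) - c(x,B^x_t)\}$ attains the supremum \emph{exactly}, and $(x,\omega)\mapsto \eta_x(\omega)$ is jointly measurable since $J_\psi$, $\psi$ and $c$ are continuous; hence $\rho := \mathrm{Law}(B^X_{\eta_X})$ with $X \sim \mu$ is well-defined, lies in $H^{-1}(O)$ via Lemma \ref{lem:dualnorm} (as $\mu \prec_{SH}\rho$), and the estimate above goes through with $\epsilon = 0$. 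A minor point to check along the way is that $\psi^i - c(x,\cdot)$ being nearly-optimally stopped by the \emph{same} $\eta_x$ still gives a valid lower bound for $J_{\psi^i}(x,x)$, which is immediate since $\eta_x \le \tau_O$ is admissible in (\ref{eqn:J_psi}) for any potential.
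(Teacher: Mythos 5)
Your proposal takes essentially the same route as the paper: both proofs hinge on the same three ingredients — the fact that $J_\psi$ is a supremum of expectations that are affine in $\psi$, the observation (via Lemma~\ref{lem:dualnorm}) that the law of $B_\tau$ lies in $H^{-1}(O)$ when $B_0\sim\mu\in H^{-1}(O)$, and the resulting continuity of $\psi\mapsto \mathbb{E}[\psi(B_\tau)]$ with respect to weak convergence in $H_0^1(O)$. The paper is slightly more streamlined: it first proves an interchange lemma, $\int_O J_\psi(x,x)\,\mu(dx)=\sup_{\tau\le \tau_O}\mathbb{E}\big[\psi(B_\tau)-c(B_0,B_\tau)\big]$ for $B_0\sim\mu$, using a measurable selection of $\epsilon$-optimal stopping times, and then both convexity and lower semicontinuity follow in one stroke from the representation as a supremum of continuous affine functionals. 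Your proof unpacks this: convexity by a pointwise sup-of-affine argument (which is, if anything, cleaner since it needs no interchange), and lower semicontinuity by fixing an optimal stopping time for the weak limit and comparing.

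One caveat worth flagging: your measurability justification, ``$(x,\omega)\mapsto \eta_x(\omega)$ is jointly measurable since $J_\psi$, $\psi$ and $c$ are continuous,'' is not quite right, since $\psi\in\mathcal{B}_D$ (and hence $J_\psi$) is only lower semicontinuous, not continuous. Similarly, Lemma~\ref{lem:SH_hitting_time} as stated applies to $h\in C(\overline{O})$ or $h\in\mathcal{B}_D$, whereas you apply it to $h=\psi-c(x,\cdot)$, which is neither; this does work (the weak continuity along stopping times in Lemma~\ref{lem:Sobolev_stopping_time_convergence} extends immediately to sums of a $\mathcal{B}_D$ function and a continuous function), but it requires a remark. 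The paper sidesteps both points by using a measurable selection of merely $\epsilon$-optimal stopping times, which is a more standard and robust device; you might as well do the same, since your $\epsilon$-based estimate already appears in your writeup and yields the conclusion after letting $\epsilon\to 0$.
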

\begin{proof}
We first prove the technical result that we may interchange the supremum and the expectation to obtain
$$
	\int_O J_{\psi}  (x,x) \mu (dx) = \sup_{\tau\leq \tau_O}\mathbb{E}\big[ \psi(B_\tau)-c(B_0,B_\tau)\big]	
$$
for the Brownian motion where $B_0\sim \mu$. To see this equality, fix $\epsilon>0$ and consider a measurable selection $\tau_x$ of stopping times for $B_0^x=x$ such that
$$
	J_\psi(x,x)\leq \mathbb{E}\big[ \psi(B_{\tau_x}^x)-c(x,B_{\tau_x}^x)\big]+\epsilon.
$$
 Integrate with respect to $\mu$ to get a stopping time $\tau$ for $B_0\sim \mu$ such that
$$
	\int_O J_{\psi}  (x,x) \mu (dx) \leq \mathbb{E}\big[ \psi(B_{\tau})-c(B_0,B_{\tau})\big]+\epsilon.
$$
For the other direction, note that any stopping time $\tau$ disintegrates as $\tau_x$, for $B_0^x=x$, and
$$
	\mathbb{E}\big[ \psi(B_\tau)-c(B_0,B_\tau)\big] = \int_O \mathbb{E}\big[ \psi(B_{\tau_x}^x)-c(x,B_{\tau_x}^x)\big]\mu(dx)\leq \int_O J_\psi(x,x)\mu(dx).
$$

Now consider $B_0 \sim \mu \in H^{-1}(O)$. From Lemma \ref{lem:dualnorm},  if $\tau\leq \tau_O$ and $B_\tau \sim \rho$ then $\rho \in H^{-1}(O)$.  Therefore, for each $\tau\leq \tau_O$, $\psi \in H_0^1(O) \mapsto \mathbb{E} \left[ \psi(B_{\tau}) - c(B_0, B_{\tau}) \right] $ is linear and continuous. This shows the desired convexity and lower semicontinuity as the map is given by the supremum of continuous linear functionals. 
\end{proof}

The next step is to show that the maximization of the dual problem $ \mathcal{D}_{c}(\mu,\nu)$  can be restricted to the smaller set $\mathcal{B}_D$.
\begin{proposition}\label{prop:normalized_dual}
We assume that   $y\mapsto c(x, y)$ is subharmonic and $D$-superharmonic, i.e., $0\leq \Delta_yc(x,y)\leq D$ as well as $\mu\prec_{SH} \nu$.  Then, it holds that 
\begin{align*}
 \mathcal{D}_{c}(\mu,\nu) = \sup_{ \psi  \in \mathcal{B}_D} \Big\{  \int_O \psi (y)\nu(dy)  - \int_O J_{\psi}(x,x) \mu(dx) \Big\}.
\end{align*}
\end{proposition}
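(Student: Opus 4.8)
The plan is to show that for any $\psi \in LSC(\overline{O})$ we can produce a competitor $\tilde\psi \in \mathcal{B}_D$ whose dual functional value is at least as large, which immediately gives the inequality $\mathcal{D}_c(\mu,\nu)\leq \sup_{\psi\in\mathcal{B}_D}\{\cdots\}$; the reverse inequality is trivial since $\mathcal{B}_D\subset LSC(\overline O)$. The natural candidate is to subtract off the boundary values of $\psi$ and then apply a $D$-superharmonic projection. Concretely, I would first reduce to $\psi$ continuous (or even smooth) by an approximation argument, and note that for $y\in\partial O$ the stopping time $\tau=0$ is forced in the definition of $J_\psi$ away from nothing — rather, the relevant observation is that replacing $\psi$ by $\psi - b$ where $b$ is the harmonic extension of $\psi|_{\partial O}$ changes $\int\psi\,d\nu - \int J_\psi(x,x)\,d\mu$ by $\int b\,d\nu - \int b\,d\mu$, which is $\geq 0$ (resp.\ $=0$) because $b$ is harmonic, hence subharmonic, and $\mu\prec_{SH}\nu$; indeed $J_{\psi-b}(x,y)=J_\psi(x,y)-b(y)$ since $b$ is harmonic and can be pulled out of the optimal stopping problem, so $J_{\psi-b}(x,x)=J_\psi(x,x)-b(x)$. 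Thus we may assume $\psi=0$ on $\partial O$.

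Next, given $\psi\in LSC(\overline O)$ with $\psi|_{\partial O}=0$, I would replace it by its $D$-superharmonic envelope relative to $O$ — that is, the smallest $D$-superharmonic function $\hat\psi$ with $\hat\psi\geq \psi$ — or more precisely work with the minimal competitor. The key computations are: (i) $J_{\hat\psi}\leq J_\psi$? No — we need to go the other way. Here the subharmonicity of $y\mapsto c(x,y)$ enters crucially: since $y\mapsto J_\psi(x,y)$ is superharmonic and dominates $\psi(\cdot)-c(x,\cdot)$, and since $c(x,\cdot)$ is subharmonic, the function $y\mapsto J_\psi(x,y)+c(x,y)$ is a supersolution of $\Delta u \le D$ (as $\Delta_y(J_\psi+c)\le 0 + D$) dominating $\psi$; so the "end potential" $\psi^{new}(y):=\inf_x\big(J_\psi(x,y)+c(x,y)\big)$, or better a direct two-sided estimate, produces a $D$-superharmonic function above $\psi$. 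I expect the cleanest route is: let $\hat\psi$ be the $D$-superharmonic envelope of $\psi$ in $O$ with zero boundary data (exists by the variational characterization / Proposition \ref{lem:BD_properties} applied to $\psi^{SH}$-type constructions and Lemma \ref{lem:variational}); then $\hat\psi\in\mathcal{B}_D$ once we check $\hat\psi\le 0$, which follows because $\psi - c(x,\cdot)\le \psi$ forces, via $J_\psi(x,x)\ge\psi(x)-c(x,x)$, nothing directly — so instead one truncates: replace $\psi$ by $\min(\hat\psi,0)$, still $D$-superharmonic since the minimum of a $D$-superharmonic function and $0$ (harmonic) is $D$-superharmonic, still $\geq \psi$ if $\psi\le 0$; and if $\psi$ is not $\le 0$ we first note increasing $\psi$ on the set where it is negative only helps. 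The two facts to verify are $\int\hat\psi\,d\nu\ge\int\psi\,d\nu$ (clear, $\hat\psi\ge\psi$) and $\int J_{\hat\psi}(x,x)\,d\mu\le\int J_\psi(x,x)\,d\mu$, i.e.\ $J_{\hat\psi}(x,x)\le J_\psi(x,x)$.

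The main obstacle is precisely this last monotonicity $J_{\hat\psi}(x,x)\le J_\psi(x,x)$ — replacing $\psi$ by a larger function $\hat\psi$ could a priori increase $J$. The resolution uses that $\hat\psi$ was obtained as a $D$-superharmonic envelope and that $c(x,\cdot)$ is $D$-subharmonic: one shows $\hat\psi(y)-c(x,y)\le J_\psi(x,y)$ for all $y$, because $J_\psi(x,\cdot)+c(x,\cdot)$ is a $D$-superharmonic (in fact one must track the constant $D$ carefully, using $0\le\Delta_y c\le D$ on both sides) function lying above $\psi$, hence above its $D$-superharmonic envelope $\hat\psi$; then $J_{\hat\psi}(x,\cdot) = $ superharmonic envelope of $\hat\psi(\cdot)-c(x,\cdot)\le$ superharmonic envelope of $J_\psi(x,\cdot)=J_\psi(x,\cdot)$, evaluated at $y=x$. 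Finally one checks $\hat\psi\in H_0^1(O)$ with $\Delta\hat\psi\le D$ weakly and $\hat\psi\le 0$ so that $\hat\psi\in\mathcal{B}_D$ by Proposition \ref{lem:BD_properties}, and concludes. I would organize the write-up as: (1) reduce to $\psi|_{\partial O}=0$ via the harmonic subtraction and $\mu\prec_{SH}\nu$; (2) define the $D$-superharmonic envelope $\hat\psi$ and show $\hat\psi\in\mathcal{B}_D$; (3) prove the sandwich $\psi\le\hat\psi$ and $J_{\hat\psi}(x,x)\le J_\psi(x,x)$ using $0\le\Delta_yc\le D$ together with Lemma \ref{lem:superharmonic} and Lemma \ref{lem:variational}; (4) combine.
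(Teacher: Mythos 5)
Your overall architecture — normalize $\psi$ and then project onto a $D$-superharmonic object — matches the paper, and your second step (bounding $\hat\psi$ above by $J_\psi(x,\cdot)+c(x,\cdot)$, which is $D$-superharmonic by $\Delta_y J_\psi\le 0$ and $\Delta_y c\le D$, and concluding $J_{\hat\psi}\le J_\psi$ after taking superharmonic envelopes) is correct and runs in essentially the same way as the paper's Lemma~\ref{lem:psi_semi_superharmonic}. The problem is your first step.

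Subtracting the harmonic extension $b$ of $\psi|_{\partial O}$ only achieves $\psi-b=0$ on $\partial O$; it does \emph{not} give $\psi-b\le 0$ in the interior. You need $\psi\le 0$ before the envelope step, since $\hat\psi\ge\psi$ and so $\hat\psi>0$ wherever $\psi>0$, which puts $\hat\psi$ outside $\mathcal{B}_D$. Your parenthetical ``if $\psi$ is not $\le 0$ we first note increasing $\psi$ on the set where it is negative only helps'' does not address this — increasing $\psi$ where it is negative makes $\psi\le 0$ \emph{harder}, and truncating by $\min(\hat\psi,0)$ breaks the sandwich $\hat\psi\ge\psi$ unless one already knows $\psi\le 0$. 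The paper instead subtracts the \emph{superharmonic envelope} $\psi^{SH}$ (not the harmonic extension of the boundary trace): since $\psi^{SH}\ge\psi$ with equality on $\partial O$, the replacement $\psi\mapsto\psi-\psi^{SH}$ gives both normalizations at once. The price is that the identity $J_{\psi-\psi^{SH}}=J_\psi-\psi^{SH}$ is no longer a triviality about pulling a harmonic function out of an expectation — it is exactly the content of Proposition~\ref{prop:equal}, whose nontrivial inequality requires $\Delta_y c\ge 0$ and a hitting-time decomposition. That proposition, not your harmonic-$b$ identity, is where the subharmonicity of $c(x,\cdot)$ first enters the proof, and without it (or some substitute) your chain does not close. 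Also, as a minor point, for harmonic $b$ you have $\int b\,d\mu=\int b\,d\nu$ exactly (both $\pm b$ are subharmonic), not merely $\ge 0$.
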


The proof of this proposition is done as two improvements to $\psi$.  To maximize the dual value we wish to choose $\psi$ as large as possible while maintaining $\psi(x)\leq J_\psi(x,y)+c(x,y)$ for every $x\in \overline{O}$, which motivates the following lemma.
\begin{lemma}\label{lem:psi_semi_superharmonic}
	Suppose 
	$y\mapsto h(x,y)$ is $D$-superharmonic for each $x\in O$. 
	\begin{enumerate}
	\item  The function $\bar\psi(y):=\inf_{z\in \overline{O}}h(z,y)$	 is $D$-superharmonic.

	\item For $h(x,y)=J_\psi(x,y)+c(x,y)$ and if $y\mapsto c(x,y)$ is $D$-superharmonic, then
\begin{itemize}
	\item $\bar\psi$ is $D$-superharmonic,
	\item $\bar\psi(y) \geq \psi(y)$ for all $y\in \overline{O}$, and
	\item $J_{\bar\psi}(x,y)=J_\psi(x,y)$ for all $(x,y)\in \overline{O}\times \overline{O}$.
\end{itemize}
\end{enumerate}
\end{lemma}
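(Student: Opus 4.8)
The plan is to establish the infimum property (1) first, using the stopping-time characterization of $D$-superharmonicity from Definition \ref{def:normalized}, and then deduce (2) by specializing to $h(x,y) = J_\psi(x,y) + c(x,y)$ together with the dynamic programming structure of the value function.

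For (1), fix $y \in \overline{O}$ and a stopping time $\tau \le \tau_O$. For each fixed $z \in \overline{O}$, the function $y \mapsto h(z,y)$ is $D$-superharmonic, so $h(z,y) \ge \mathbb{E}\big[h(z,B^y_\tau) - D\tau\big] \ge \mathbb{E}\big[\bar\psi(B^y_\tau) - D\tau\big]$, where the last inequality uses $h(z,\cdot) \ge \bar\psi$ pointwise. Taking the infimum over $z \in \overline{O}$ on the left gives $\bar\psi(y) \ge \mathbb{E}\big[\bar\psi(B^y_\tau) - D\tau\big]$, which is exactly $D$-superharmonicity of $\bar\psi$. (If one prefers to stay in the viscosity framework, one notes that an infimum of viscosity supersolutions of $\Delta u \le D$ is again a viscosity supersolution; but the stopping-time argument is cleaner here and is the one the rest of the paper uses.) One should also check $\bar\psi \in LSC(\overline{O})$ so that Lemma \ref{lem:superharmonic} applies; lower semicontinuity of an infimum is not automatic, so here one uses that the $h(z,\cdot)$ are uniformly controlled — e.g. when $h = J_\psi + c$ the family is equi-lower-semicontinuous from the uniform modulus coming from $c \in C(\overline{O}\times\overline{O})$ and the bounds on $\mathcal{B}_D$ — which is a routine but necessary verification.

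For (2), first note that when $y \mapsto c(x,y)$ is $D$-superharmonic and $\psi \in \mathcal{B}_D$ (hence superharmonic, $\Delta \psi \le 0 \le D$), the obstacle-problem/envelope characterization of $J_\psi$ shows $y \mapsto J_\psi(x,y)$ is superharmonic, so $y \mapsto J_\psi(x,y) + c(x,y)$ is $D$-superharmonic for each fixed $x$; thus part (1) applies and $\bar\psi$ is $D$-superharmonic. The inequality $\bar\psi(y) \ge \psi(y)$ follows by taking $\tau = 0$ in the definition \eqref{eqn:J_psi} of $J_\psi$: $J_\psi(x,y) \ge \psi(y) - c(x,y)$, i.e. $J_\psi(x,y) + c(x,y) \ge \psi(y)$ for every $x$, so the infimum over $x$ is still $\ge \psi(y)$. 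The substantive claim is $J_{\bar\psi} = J_\psi$. One direction, $J_{\bar\psi} \ge J_\psi$, is immediate from monotonicity of $\psi \mapsto J_\psi$ in the end potential (larger $\psi$ gives larger value in \eqref{eqn:J_psi}) together with $\bar\psi \ge \psi$. For the reverse, $J_{\bar\psi}(x,y) \le J_\psi(x,y)$: since $y\mapsto J_\psi(x,y)$ is superharmonic (for the fixed first argument $x$) and, by definition of $\bar\psi$, $J_\psi(x,\cdot) + c(x,\cdot) \ge \bar\psi$, i.e. $J_\psi(x,\cdot) \ge \bar\psi - c(x,\cdot)$, we see that $y \mapsto J_\psi(x,y)$ is a superharmonic function dominating the obstacle $\bar\psi - c(x,\cdot)$; but $J_{\bar\psi}(x,\cdot)$ is the \emph{least} such superharmonic majorant (its envelope characterization), so $J_{\bar\psi}(x,y) \le J_\psi(x,y)$ for all $(x,y)$.

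The main obstacle I expect is the lower-semicontinuity bookkeeping for $\bar\psi$ — making precise that the infimum over $z \in \overline{O}$ genuinely lands back in $LSC(\overline{O})$ (so that the probabilistic and viscosity notions of $D$-superharmonicity remain interchangeable via Lemma \ref{lem:superharmonic}), which requires a uniform modulus-of-continuity or equi-semicontinuity estimate on the family $\{h(z,\cdot)\}_{z\in\overline{O}}$ rather than anything deep; everything else is a direct application of the envelope/least-superharmonic-majorant description of $J_\psi$ already set up in Section \ref{sec:BD}, plus the trivial $\tau = 0$ comparison.
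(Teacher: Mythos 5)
Your proof is correct and follows essentially the same route as the paper: in part (1) the paper uses an $\epsilon$-near-minimizer $z$ where you take the infimum after fixing $z$ (equivalent), and in part (2) the paper proves $J_{\bar\psi}\le J_\psi$ by the direct stopping-time bound $\mathbb{E}[\bar\psi(B^y_\tau)-c(x,B^y_\tau)]\le \mathbb{E}[J_\psi(x,B^y_\tau)]\le J_\psi(x,y)$, which is the same content as your least-superharmonic-majorant phrasing. Your lower-semicontinuity worry is moot for the lemma itself since the argument works directly with the stopping-time definition of $D$-superharmonicity (no appeal to Lemma \ref{lem:superharmonic} is needed); the continuity of $\bar\psi$ is addressed separately in the proof of Proposition \ref{prop:normalized_dual}, where it follows from uniform continuity of $J_{\psi-\psi^{SH}}$ and $c$.
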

\begin{proof} 
For any $y\in \overline{O}$ and $\epsilon>0$, there is $z\in \overline{O}$ such that
	$\bar\psi(y) +\epsilon \ge h(z,y),$ hence 
	  for any stopping time $\tau\leq \tau_O$,
	$$
		\bar\psi(y) +\epsilon \ge h(z,y)\geq \mathbb{E}\big[h(z,B^y_\tau)-D\tau\big]\geq \mathbb{E}\big[\bar \psi(B_\tau^y)-D\tau\big],
	$$
	where we have used that $y\mapsto h(x,y)$ is $D$-superharmonic and $\bar\psi(y')\leq h(z,y')$ for all $y' \in \overline{O}$. The first item (1) follows by letting $\epsilon \to 0$.

	For the second part of the lemma, we have that $y\mapsto c(x,y)+J_\psi(x,y)$ is $D$-superharmonic since $c$ is $D$-superharmonic and $J_\psi$ is superharmonic. That $\bar\psi(y) \geq \psi(y)$ is obvious from the fact that $\psi(y)\leq J_\psi(x,y)+c(x,y)$.  
	For the last item in (2), observe that $J_\psi\leq J_{\bar\psi}$ as
	$$
		J_\psi(x,y)=\sup_{\tau\leq \tau_O}\mathbb{E}\big[\psi(B_\tau^y)-c(x,B_\tau^y)\big]\leq\sup_{\tau\leq \tau_O}\mathbb{E}\big[\bar\psi(B_\tau^y)-c(x,B_\tau^y)\big]=J_{\bar \psi}(x,y).
	$$
	By definition of $\bar\psi$ we have that $\bar\psi(y)\leq J_\psi(x,y)+c(x,y)$ thus we have that for all $\tau\leq \tau_O$,
	$$
		\mathbb{E}\big[\bar\psi(B_\tau^y)-c(x,B_\tau^y)\big]\leq \mathbb{E}\big[ J_\psi(x,B_\tau^y)\big]\leq J_\psi(x,y),
	$$
	and it follows that $J_\psi \geq J_{\bar\psi}$.
\end{proof}

The second improvement of $\psi$ is slightly more subtle. We make use of the assumption that $\mu\prec_{SH}\nu$ so that subtracting a superharmonic function of $y$ from both $\psi$ and $J_\psi$ will not decrease the dual value.  In general, subtracting a superharmonic function would violate the constraint that $y\mapsto J_\psi(x,y)$ is superharmonic, but subtracting the superharmonic envelope of $\psi$ from $J_\psi$ miraculously does not violate this constraint given that $y\mapsto c(x,y)$ is subharmonic.  

\begin{proposition}\label{prop:equal}
	Assume that $c\in C(\overline{O}\times \overline{O})$ and for all $x\in \overline{O}$, $y\mapsto c(x,y)$ is subharmonic, i.e., $\Delta_y c(x,y) \ge 0$. 
 	Then, for each $\psi \in C(\overline{O})$,  we have 
 	\begin{align*}
 		J_{\psi -\psi^{SH}} (x,y) = J_\psi (x, y) -\psi^{SH}(y), \hbox{ for all $(x, y) \in \overline{O}\times \overline{O}$}. 
 	\end{align*}
\end{proposition}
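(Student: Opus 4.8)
The plan is to establish the identity by proving the two inequalities separately, using the probabilistic representation of $J_\psi$ together with the superharmonicity of $y \mapsto \psi^{SH}(y)$ and the subharmonicity of $y \mapsto c(x,y)$. Write $\phi := \psi - \psi^{SH}$. For the first (easy) direction, I would start from the definition
\begin{align*}
	J_{\phi}(x,y) = \sup_{\tau\leq \tau_O}\mathbb{E}\big[\phi(B^y_\tau) - c(x,B^y_\tau)\big] = \sup_{\tau\leq \tau_O}\mathbb{E}\big[\psi(B^y_\tau) - c(x,B^y_\tau) - \psi^{SH}(B^y_\tau)\big].
\end{align*}
Since $\psi^{SH}$ is superharmonic (it is a superharmonic envelope, hence superharmonic in the sense of Lemma~\ref{lem:superharmonic}), one has $\mathbb{E}\big[\psi^{SH}(B^y_\tau)\big] \leq \psi^{SH}(y)$ for every $\tau \leq \tau_O$, so that
\begin{align*}
	\mathbb{E}\big[\psi(B^y_\tau) - c(x,B^y_\tau) - \psi^{SH}(B^y_\tau)\big] \geq \mathbb{E}\big[\psi(B^y_\tau) - c(x,B^y_\tau)\big] - \psi^{SH}(y),
\end{align*}
and taking the supremum over $\tau$ gives $J_{\phi}(x,y) \geq J_\psi(x,y) - \psi^{SH}(y)$. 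This direction does not use subharmonicity of $c$.

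For the reverse inequality $J_{\phi}(x,y) \leq J_\psi(x,y) - \psi^{SH}(y)$, the key idea — and this is where subharmonicity of $c$ enters — is to run two nested optimal stopping problems. Fix $x$ and $y$. Using Lemma~\ref{lem:SH_hitting_time} applied to $h(\cdot) = \psi(\cdot)$ (it is continuous), let $\eta$ be the hitting time of the contact set $\{z : \psi^{SH}(z) = \psi(z)\}$ for Brownian motion from $y$; then $\eta$ attains $\psi^{SH}(y) = \mathbb{E}[\psi(B^y_\eta)]$ and, crucially, $\psi^{SH}(B^y_\eta) = \psi(B^y_\eta)$ a.s. Now take any stopping time $\tau \leq \tau_O$ for $B^y$ and split it at $\eta$. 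On the part of the path before $\eta$, I use the strong Markov property together with the supermartingale property of $\psi^{SH}(B_t) $ and the submartingale property of $c(x, B_t)$ (the latter from $\Delta_y c \geq 0$ and Lemma~\ref{lem:superharmonic}) to control the increments of $\phi(B_t) - (\text{the }J_\psi\text{ bound})$; on the part after $\eta$, since $\psi^{SH}$ and $\psi$ agree at $B^y_\eta$, the remaining optimization is exactly the one defining $J_\psi(x, B^y_\eta)$, giving $\mathbb{E}\big[\psi(B^y_\tau) - c(x,B^y_\tau) \,\big|\, \mathcal{F}_\eta\big] \leq J_\psi(x, B^y_\eta) = J_\psi(x,B^y_\eta)$ on $\{\tau \geq \eta\}$. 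Assembling via the tower property and using $\mathbb{E}[\psi^{SH}(B^y_\eta)] = \psi^{SH}(y)$ together with superharmonicity of $y \mapsto J_\psi(x,y)$ should yield $\mathbb{E}\big[\phi(B^y_\tau) - c(x,B^y_\tau)\big] \leq J_\psi(x,y) - \psi^{SH}(y)$.

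The main obstacle is the bookkeeping for stopping times $\tau$ that are not comparable to $\eta$ — i.e.\ handling the event $\{\tau < \eta\}$ correctly. The cleanest route is probably: given arbitrary $\tau \leq \tau_O$, compare with the stopping time $\tau \vee \eta$. On $\{\tau < \eta\}$ the Brownian path has not yet reached the contact set, so $\psi^{SH}(B^y_\tau) \geq \psi(B^y_\tau)$ with possibly strict inequality; one must check that replacing $\tau$ by $\tau \vee \eta$ only increases $\mathbb{E}[\psi(B_\tau^y) - c(x,B_\tau^y) - \psi^{SH}(B_\tau^y)]$, which follows because on this event $\psi^{SH}$ is harmonic along the path (standard for obstacle-problem solutions: $\psi^{SH}$ is harmonic off its contact set), so $\mathbb{E}[\psi^{SH}(B_{\tau\vee\eta}^y)\mid \mathcal{F}_\tau] = \psi^{SH}(B_\tau^y)$ there, while $\mathbb{E}[\psi(B_{\tau\vee\eta}^y)\mid \mathcal{F}_\tau] \geq \psi(B_\tau^y)$ is false in general — so instead one should bound $\mathbb{E}[\psi(B_{\tau\vee\eta}^y) - c(x,B_{\tau\vee\eta}^y)\mid\mathcal F_\tau]$ from below using $\psi - c(x,\cdot) \leq J_\psi(x,\cdot)$ and the harmonicity of $\psi^{SH}$ to re-express everything in terms of $J_\psi(x, B^y_\eta) - \psi^{SH}(B^y_\eta) = J_\psi(x,B^y_\eta) - \psi(B^y_\eta)$. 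I expect the argument to reduce cleanly once one observes that subharmonicity of $c$ is exactly what guarantees $y \mapsto J_\psi(x,y) - \psi^{SH}(y)$ is still superharmonic, so that it is a legitimate competitor in the obstacle problem defining $J_{\psi - \psi^{SH}}$, at which point minimality of the value function closes the loop.
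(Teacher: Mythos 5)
Your treatment of the easy direction ($J_{\psi-\psi^{SH}} \ge J_\psi - \psi^{SH}$) is correct and essentially identical to the paper's, and you correctly identify the two facts from Lemma~\ref{lem:SH_hitting_time} that must drive the reverse inequality: $\psi^{SH}(y)=\mathbb{E}[\psi^{SH}(B^y_{\sigma\wedge\eta})]$ and $\psi(B^y_\eta)=\psi^{SH}(B^y_\eta)$. But your execution of the hard direction has a genuine gap.

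You try to improve an arbitrary $\tau$ to $\tau\vee\eta$, but this does not close. To conclude that passing from $\tau$ to $\tau\vee\eta$ does not decrease $\mathbb{E}[\psi(B_\tau)-\psi^{SH}(B_\tau)-c(x,B_\tau)]$, you would need $\mathbb{E}[\psi(B_{\tau\vee\eta})-c(x,B_{\tau\vee\eta})\mid\mathcal{F}_\tau]\ge\psi(B_\tau)-c(x,B_\tau)$ on $\{\tau<\eta\}$, and there is no reason for $\psi-c(x,\cdot)$ to be subharmonic, so this step fails — as you yourself observe. You then retreat to the claim that subharmonicity of $c$ makes $y\mapsto J_\psi(x,y)-\psi^{SH}(y)$ superharmonic, so it would dominate the minimal superharmonic majorant $J_{\psi-\psi^{SH}}(x,\cdot)$. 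That claim, if true, would indeed finish the proof; but it is asserted, not proved, and it is not elementary. The difference of two superharmonic functions need not be superharmonic, and to see that it is in this case one must analyze the free boundary of the obstacle problem (in particular, one needs that the contact set $\{\psi^{SH}=\psi\}$ is contained in the contact set $\{J_\psi(x,\cdot)=\psi-c(x,\cdot)\}$ and that no positive singular measure is created along the two free boundaries), which for merely continuous $\psi$ requires regularity theory the paper deliberately avoids.

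The paper's proof instead splits at $\tau\wedge\eta$ rather than $\tau\vee\eta$: starting from a (randomized) $\tau$ attaining $J_{\psi-\psi^{SH}}(x,y)$, it writes the expectation as the sum of three terms, $I=\mathbb{E}[\psi(B_{\tau\wedge\eta})-\psi^{SH}(B_{\tau\wedge\eta})-c(x,B_{\tau\wedge\eta})]$, $II=\mathbb{E}[(\psi-\psi^{SH})(B_\tau)-(\psi-\psi^{SH})(B_{\tau\wedge\eta})]$ and $III=-\mathbb{E}[c(x,B_\tau)-c(x,B_{\tau\wedge\eta})]$. Then $I\le J_\psi(x,y)-\psi^{SH}(y)$ by Lemma~\ref{lem:SH_hitting_time}(2) and the definition of $J_\psi$; $II\le 0$ because either $\tau=\tau\wedge\eta$ (bracket is zero) or $\eta=\tau\wedge\eta$ (so $(\psi-\psi^{SH})(B_{\tau\wedge\eta})=0$ by Lemma~\ref{lem:SH_hitting_time}(1) and $\psi-\psi^{SH}\le 0$); and $III\le 0$ is exactly the subharmonicity of $c$. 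This is a purely probabilistic estimate, requiring no regularity of the free boundary, which is why the $\tau\wedge\eta$ decomposition is the right move. If you want to rescue your approach, you would either carry out the $\tau\wedge\eta$ three-term estimate, or give a complete proof that $J_\psi(x,\cdot)-\psi^{SH}$ is superharmonic; as written, neither route is completed.
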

\begin{proof}
 We fix $(x, y)$ in $\overline{O}\times \overline{O}$. 
The proof that $J_{\psi -\psi^{SH}} (x,y) \ge J_\psi (x, y) -\psi^{SH}(y)$ is easy and does not need the assumption $\Delta_y c(x, y) \ge0$. Indeed, we let $\bar \tau$ be a (randomized) stopping time that attains the supremum for $J_\psi$, so that 
 \begin{align*}
	J_{\psi}(x,y) = \mathbb{E}\big[ \psi (B^y_{\bar \tau})  - c(x, B^y_{\bar \tau}) \big].
\end{align*}
Using $\mathbb{E}\left[ \psi^{SH}(B^y_{\bar \tau}) \right] \le \psi^{SH}(y)$, we have
\begin{align*}
 J_\psi (x, y) - \psi^{SH} (y) 
 \le \mathbb{E}\big[ \psi (B^y_{\bar\tau})   - c(x, B^y_{\bar \tau})  - \psi^{SH}(B^y_{\bar \tau}) \big]
 \le J_{\psi -\psi^{SH}}(x,y).
\end{align*}
The last inequality is due to the definition of $J_{\psi -\psi^{SH}}$.

 The reverse inequality is important for the proof of 
 Proposition~\ref{prop:normalized_dual} and requires  the assumption $\Delta_y c(x, y) \ge 0$. 
We let $\tau$ be a (randomized) stopping time attaining the supremum for $J_{\psi-\psi^{SH}}$, so that  
\begin{align*}
 J_{\psi-\psi^{SH}} (x, y) =  \mathbb{E}\big[ \psi (B^y_\tau) - \psi^{SH} (B^y_\tau) - c(x, B^y_\tau) \big].
\end{align*}
We consider the first hitting time:
$ \eta = \inf \{t;\ \psi (B^y_t ) =\psi^{SH} (B^y_t) \}.$ 
We can write
\begin{align*}
&  \mathbb{E}\left[ \psi (B^y_\tau) - \psi^{SH} (B^y_\tau) - c(x, B^y_\tau)\right]\\
&= \mathbb{E} \Big[ \psi(B^y_{\tau \wedge \eta}) -\psi^{SH}(B^y_{\tau\wedge\eta}) - c(x, B^y_{\tau\wedge\eta}) \Big]\\
&  \quad +   \mathbb{E} \Big[ \psi (B_\tau^y) - \psi(B^y_{\tau \wedge \eta})  - \psi^{SH} (B^y_\tau) +\psi^{SH}(B^y_{\tau\wedge\eta}) \Big]
\\
&  \quad - \mathbb{E}  \Big[c(x, B^y_\tau) -c(x, B^y_{\tau\wedge\eta}) \Big]\\
& = I + II + III. 
\end{align*}
For $I$, use item (2) of Lemma \ref{lem:SH_hitting_time} and the definition of $J_\psi$, to see
\begin{align*}
	I \le J_{\psi}(x,y) -\psi^{SH} (y).
\end{align*}
For $III$, by the assumption $\Delta_y c(x, y) \ge 0$, we see that
 $III \le  0.$
For the term $II$, notice that 
\begin{align*}
 \hbox{if $\tau =\tau\wedge \eta$ then $\psi (B^y_\tau) = \psi(B^y_{\tau\wedge\eta})$ and $\psi^{SH}(B^y_\tau) = \psi^{SH}(B^y_{\tau\wedge\eta})$}.
\end{align*}
Moreover, by item (1) of Lemma \ref{lem:SH_hitting_time}
\begin{align*}
 \hbox{if $\eta = \tau \wedge\eta$ then $\psi (B^y_{\tau\wedge\eta}) = \psi^{SH}(B^y_{\tau\wedge\eta})$}.
\end{align*}
Recall  that $\psi \le \psi^{SH}$ always. Therefore, we can conclude that 
$ II \le 0.$
All of these together imply  that
$   J_{\psi -\psi^{SH}} (x,y)
   \le J_\psi (x, y) -\psi^{SH}(y), $
as desired, completing the proof. 
\end{proof}

We now have the necessary ingredients to prove Proposition \ref{prop:normalized_dual}, which is the last component of the proof of Theorem \ref{thm:strong_dual}.
\begin{proof}[\bf Proof of Proposition \ref{prop:normalized_dual}]
The inequality 
\begin{align*}
 \mathcal{D}_{c}(\mu,\nu) \geq \sup_{ \psi  \in \mathcal{B}_D} \Big\{  \int_O \psi (y)\nu(dy)  - \int_O J_{\psi}(x,x) \mu(dx) \Big\}
\end{align*}
follows directly from the definitions of $\mathcal{D}_c$ and $\mathcal{B}_D$ as $\mathcal{B}_D\subset LSC(\overline{O})$.

For the reverse inequality,  first notice that for each $\phi \in LSC(\overline{O})$ there exists a sequence of continuous functions $\phi^i$ such that $\lim_{i\to \infty} \phi^i (x) = \phi(x)$, $\forall x \in \overline{O}$; for example, one can consider the inf-convolution as in the proof of Lemma~\ref{lem:viscosity_approximation}. Therefore,   it suffices to prove that for any  $\psi \in C(\overline{O})$ we can modify it to $\bar \psi \in \mathcal{B}_D$ such that 
\begin{align*}
 \int_O \psi (y) \nu(dy) - \int_O J_{\psi}(x, x) \mu(dx) \le  \int_O \bar \psi (y) \nu(dy) - \int_O J_{\bar \psi}(x, x)\mu(dx).
\end{align*}
(We can then apply the modification to a maximizing sequence of $\mathcal{D}_{c}$ to get another maximizing sequence but now from the class $\mathcal{B}_D$.)
 \subsubsection*{\bf Improvement 1 ($\psi=0$ on $\partial O$  and $\psi\leq 0$ in $O$)} We first modify $\psi$ to $\psi-\psi^{SH}$, 
using the superharmonic envelope $\psi^{SH}$ given in \eqref{eqn:SH}. 
We see that 
\begin{align*}
 & \int_O \psi (y) \nu (dy) - \int_O J_\psi(x,x) \mu(dx) \\
 & \le \int \left(\psi(y) -\psi^{SH} (y)\right) \nu (dy) - \int \left(J_{\psi} (x, x) -\psi^{SH}(x) \right)\mu(dx)
\end{align*}
because  $\psi^{SH}$ is super-harmonic and $\mu \prec_{SH}\nu$. 
It is important to notice that from  Proposition~\ref{prop:equal}, $J_{\psi-\psi^{SH}} = J_\psi -\psi^{SH}$. Also, notice that because $\overline{O}$ is compact, the continuous function $\psi$ is uniformly continuous, and so are  $\psi-\psi^{SH}$  and $J_{\psi-\psi^{SH}}$. 

\subsubsection*{\bf Improvement 2 ($\Delta \psi \leq D$ in $O$)} Now, modify the function $\psi -\psi^{SH}$ further, to 
$$
	\bar \psi(y)=\inf_{z\in \overline{O}} \big\{J_{\psi-\psi^{SH}}(z,y)+c(z,y)\},
$$ 
as in  Lemma~\ref{lem:psi_semi_superharmonic} with $h=J_{\psi-\psi^{SH}}+c$. Here $\bar\psi$ is continuous on $\overline{O}$ following from uniform continuity of $J_{\psi-\psi^{SH}}$ and $c$. From the Lemma we have that $\bar\psi$ is $D$-superharmonic, $\psi-\psi^{SH} \le \bar \psi$, and  $J_{\bar \psi} = J_{\psi-\psi^{SH}}$. Then, the last line in the above inequality is less than or equal to 
\begin{align*}
  \int_O \bar\psi (y) \nu (dy) - \int_O J_{\bar \psi}(x, x) \mu(dx).
\end{align*}
getting the desired inequality. 

We also have  that $\bar\psi(y)\leq 0$ for all $y$, since 
\begin{align*}
	J_{\psi-\psi^{SH}}(x,y)+c(x,y)= \sup_{\tau}\mathbb{E}\big[\psi(B_\tau^y)-\psi^{SH}(B_\tau^y)-c(x,B_\tau^y)+c(x,y)\big]\leq 0,
\end{align*}
and for $y\in \partial O$, $\bar\psi(y)=0$ since $J_{\psi-\psi^{SH}}(x,y)=-c(x,y)$. Thus, by definition this implies that $\bar\psi\in \mathcal{B}_D$. This completes the proof. 
\end{proof}
\begin{proof}[\bf Proof of Theorem \ref{thm:strong_dual}]
We use Proposition \ref{prop:normalized_dual} to find a sequence $\psi_i\in \mathcal{B}_D$ such that
$$
	\mathcal{D}_{c} (\mu, \nu)= \lim_{i\rightarrow \infty}\Big\{\int_O \psi_i (y) \nu(dy) - \int_O J_{\psi_i} (x,x) \mu(dx)\Big\}.
$$
Uniform boundedness of $\| \psi_i\|_{H^1_0 (O)}$ given by \eqref{eqn:psi-infty-bound-long} of Proposition \ref{lem:BD_properties} implies there is a weak limit $\psi^* \in H_0^1(O)$ of $\psi_i$. 
Note also that such a weak limit preserves the property $\psi^* \le 0$ as well as $\Delta \psi^* \le D$ in the weak sense, so by the equivalence of Proposition \ref{lem:BD_properties} we have $\psi^* \in \mathcal{B}_D$.  Since $\mu\in H^{-1}(O)$ and $\mu\prec _{SH}\nu$ we have $\nu\in H^{-1}(O)$ from Lemma \ref{lem:dualnorm}, and 
$$
	\displaystyle \lim_{i\to\infty} \int_O \psi_i(y) \nu(dy) = \int_O \psi^*(y) \nu(dy).
$$  
On the other hand,  from the lower semicontinuity shown in Lemma~\ref{lem:forlimits}, we have 
$$\displaystyle \lim_{i\to\infty} \int_O J_{\psi_i}(x,x) \mu(dx) \ge  \int_O J_{\psi^*}(x,x) \mu(dy), $$ which then implies 
\begin{align*}
	\int_O \psi^*(y) \nu(dy) -  \int_O J_{\psi^*}(x,x) \mu(dx)  \ge \mathcal{D}_{c} (\mu, \nu). 
\end{align*}
Since $\psi^*\in LSC(\overline{O})$, the above two inequalities are in fact equalities, showing the identity  \eqref{eqn:int-limit}. This completes the proof. 
\end{proof}

We now demonstrate the verification properties for how the dual optimizers pair with the primal minima. We will use these general results to prove uniqueness of the optimal stopping times in Section \ref{sec:hitting_time} after introducing the crucial assumption on the cost,  the stochastic twist condition in  Section \ref{sec:ST}.

	Let $\pi^*$ be an optimizer of $\mathcal{P}_c(\mu, \nu)$, and let $\tau^*$ be the corresponding optimal (randomized) stopping time. 
	Notice that for the disintegration $\pi^* (dx,dy) =\pi_x^* (dy)\mu(dx)$, the measure $\pi_x^*$ describes the distribution of the stopped Brownian paths $B^x_{\tau^*}$. 
	The dual optimizers $(\psi^*, J_{\psi^*})$ in Theorem~\ref{thm:strong_dual} are useful tools for us to characterize this measure. For example, we have the following important property. 

\begin{theorem}\label{thm:verification}
Under the same assumptions as in Theorem~\ref{thm:strong_dual}, 
 the optimal Brownian path stops at the contact set, namely, for $\pi^*$-a.e. $(x,y) \in O\times O$, 
\begin{align*}
 	J_{\psi^*} (x, y) = \psi^* (y) - c(x, y),
\end{align*}
and in particular $\tau^*\geq \eta:= \inf\{t  \ ; \ J_{\psi^*} (B_0, B_t) = \psi^* (B_t) - c(B_0, B_t)\}$.

Furthermore, if $\sigma(dx,dy)=\sigma_x(dy)\mu(dx)$ satisfies $0\prec_{SH}\sigma_x\prec_{SH}  \pi_x^*$ for each $\mu$-a.e. $x$,  then
\begin{align}\label{eqn:in-order}
	\int_{O\times O} J_{\psi^*}(x,y)\pi^*(dx,dy)=\int_{O\times O} J_{\psi^*}(x,y)\sigma(dx,dy)=\int_{O} J_{\psi^*}(x, x)\mu(dx).
\end{align}
If $\xi$ is a (nonrandomized) stopping time corresponding to $\sigma$, then $\xi\leq \tau^*$ and for $\mu$ a.e.\ $x$ and $\xi$ restricted to the paths with $B_0=x$,
\begin{align}\label{eqn:equalwithxi}
\mathbb{E}\big[ J_{\psi^*} (x, B_\xi^x) \big]  = J_{\psi^*}(x, x).
\end{align}
Also, for $\sigma$ a.e.\ $(x,y)$ and the stopping time $\tau^*-\xi$ restricted to paths satisfying $B_\xi=y$,
\begin{align}\label{eqn:equalwithxi2}
 \mathbb{E}\big[ J_{\psi^*} (x, B_{\tau^*-\xi}^y)\big]  & = 
  J_{\psi^*} (x, y).
\end{align}
In particular, these hold for $\xi=\eta$.
\end{theorem}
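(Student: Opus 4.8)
The plan is to exploit that $\psi^*\in\mathcal{B}_D$ attains the dual value. First I would establish the contact-set statement by a chain of inequalities that must collapse to equalities. Start from the definition of $J_{\psi^*}$ and the representation of $\tau^*$ from Lemma~\ref{lem:measure-stoppingtime}: since $(B_0,B_{\tau^*})\sim\pi^*$, one has
\[
\int_O\psi^*(y)\,\nu(dy)-\int_O J_{\psi^*}(x,x)\,\mu(dx)=\mathbb{E}\big[\psi^*(B_{\tau^*})-J_{\psi^*}(B_0,B_0)\big].
\]
Using that $y\mapsto J_{\psi^*}(x,y)$ is superharmonic (so $J_{\psi^*}(B_0,B_0)\ge\mathbb{E}[J_{\psi^*}(B_0,B_{\tau^*})\mid B_0]$) and then $J_{\psi^*}(x,y)\ge\psi^*(y)-c(x,y)$, we bound the right-hand side above by $\mathbb{E}[\psi^*(B_{\tau^*})-J_{\psi^*}(B_0,B_{\tau^*})]\le\mathbb{E}[c(B_0,B_{\tau^*})]=\mathcal{P}_c(\mu,\nu)$. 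By Theorem~\ref{thm:strong_dual} the leftmost quantity equals $\mathcal{D}_c(\mu,\nu)=\mathcal{P}_c(\mu,\nu)$, so every inequality is an equality. The equality in the last step forces $J_{\psi^*}(x,y)=\psi^*(y)-c(x,y)$ for $\pi^*$-a.e.\ $(x,y)$, and since by definition of $\eta$ the path $B$ with $B_0=x$ cannot reach a point $y$ with $J_{\psi^*}(x,y)=\psi^*(y)-c(x,y)$ before time $\eta$, we get $\tau^*\ge\eta$.

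Next, for the ordering identity \eqref{eqn:in-order}: the equality in the superharmonicity step above says $\mathbb{E}[J_{\psi^*}(B_0,B_0)]=\mathbb{E}[J_{\psi^*}(B_0,B_{\tau^*})]$, i.e.\ $\int_O J_{\psi^*}(x,x)\mu(dx)=\int_{O\times O}J_{\psi^*}(x,y)\pi^*(dx,dy)$. Now if $0\prec_{SH}\sigma_x\prec_{SH}\pi_x^*$, superharmonicity of $y\mapsto J_{\psi^*}(x,y)$ gives $J_{\psi^*}(x,x)\ge\int J_{\psi^*}(x,y)\sigma_x(dy)\ge\int J_{\psi^*}(x,y)\pi_x^*(dy)$ for $\mu$-a.e.\ $x$; integrating against $\mu$ and comparing with the just-proven equality of the two extremes forces the middle integral to agree with both, which is \eqref{eqn:in-order}. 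Here I would need to be slightly careful that the intermediate disintegrated inequalities can be integrated — this is fine because $J_{\psi^*}$ is bounded (being in $\mathcal{B}_D$ minus the bounded function $c$, or directly from the bounds in Section~\ref{sec:BD}).

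For \eqref{eqn:equalwithxi}: if $\xi$ is the nonrandomized stopping time for $\sigma$, then $\xi\le\tau^*$ follows from the $\prec_{SH}$ ordering plus Lemma~\ref{lem:measure-stoppingtime} (the paths stopped by $\xi$ are stopped earlier). The equality $\mathbb{E}[J_{\psi^*}(x,B_\xi^x)]=J_{\psi^*}(x,x)$ for $\mu$-a.e.\ $x$ is the disintegrated form of \eqref{eqn:in-order}: we have $J_{\psi^*}(x,x)\ge\mathbb{E}[J_{\psi^*}(x,B_\xi^x)]$ pointwise by superharmonicity, with equality in the integral, hence equality a.e.\ Then \eqref{eqn:equalwithxi2} follows by the tower property: $\mathbb{E}[J_{\psi^*}(x,B_{\tau^*}^x)]=\mathbb{E}\big[\mathbb{E}[J_{\psi^*}(x,B_{\tau^*-\xi}^{B_\xi})\mid B_\xi]\big]$ combined with $\mathbb{E}[J_{\psi^*}(x,B_{\tau^*}^x)]=J_{\psi^*}(x,x)=\mathbb{E}[J_{\psi^*}(x,B_\xi^x)]$ and superharmonicity $\mathbb{E}[J_{\psi^*}(x,B_{\tau^*-\xi}^y)\mid B_\xi=y]\le J_{\psi^*}(x,y)$, forcing equality $\sigma$-a.e. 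Finally, $\eta$ itself is a nonrandomized stopping time with corresponding plan $\sigma$ satisfying $0\prec_{SH}\sigma_x\prec_{SH}\pi_x^*$ (the second since $\eta\le\tau^*$), so the special case applies.

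The main obstacle I expect is the measure-theoretic bookkeeping around disintegration: passing from the integrated equalities to the pointwise ($\mu$-a.e.\ or $\sigma$-a.e.) statements, and correctly handling randomized versus nonrandomized stopping times via Lemma~\ref{lem:measure-stoppingtime} when asserting $\xi\le\tau^*$ and splitting $\tau^*$ as $\xi+(\tau^*-\xi)$ on the appropriate path space. The probabilistic inequalities themselves are all just superharmonicity of $y\mapsto J_{\psi^*}(x,y)$ plus the contact-set constraint, so no new analytic input beyond Theorem~\ref{thm:strong_dual} is needed.
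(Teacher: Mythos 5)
Your proof is correct and follows essentially the same route as the paper: collapse a chain of superharmonicity and contact-set inequalities sandwiched between the primal value and the dual value (which coincide by Theorem~\ref{thm:strong_dual}), then disintegrate the resulting integrated equalities to obtain the $\mu$-a.e.\ and $\sigma$-a.e.\ pointwise statements. The only cosmetic difference is that you first run the chain with $\pi^*$ alone and then insert $\sigma$ in a second pass, whereas the paper inserts $\sigma$ into the middle of a single chain; the substance and level of rigor on the $\xi \le \tau^*$ bookkeeping match the paper's.
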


\begin{proof}

For $\sigma$ satisfying $\sigma(dx,dy)=\sigma_x(dy)\mu(dx)$ with $\delta_x\prec_{SH}\sigma_x\prec_{SH}\pi_x$ for each $\mu$-a.e. $x$, from the superharmonic property of $J_{\psi^*}$
	we have the inequalities
	$$
		\int_{O\times O} J_{\psi^*}(x,y){\pi}^*(dx,dy)\leq \int_{O\times O} J_{\psi^*}(x,y)\sigma(dx,dy)\leq\int_{O} J_{\psi^*}(x, x)\mu(dx).
	$$
Therefore, from  the obvious inequality  $J_{\psi^*}(x,y)\geq \psi^*(y)-c(x,y)$,  we see that 
	\begin{align*}
		\int_{O\times O} c(x,y){\pi^*}(dx,dy)&\ge  \int_{O\times O} \big[\psi^*(y)-J_{\psi^*}(x,y)\big]{\pi^*}(dx,dy)\\&\ge  \int_O \psi^*(y)\nu(dy)-\int_{O\times O} J_{\psi^*}(x, y) \sigma (dx, dy)\\
		&\ge  \int_O \psi^*(y)\nu(dy)-\int_O J_{\psi^*}(x, x)\mu(dx).
	\end{align*}
	Strong duality (Theorem~\ref{thm:strong_dual}) implies that the first and the last end of these inequalities are the same, making all the inequalities equalities. In particular, this  implies that $ J_{\psi^*} (x, y) = \psi^* (y) - c(x, y).$ for $\pi^*$-a.e. $(x,y)$. The equalities \eqref{eqn:in-order} also follow. Then the equalities \eqref{eqn:equalwithxi} and \eqref{eqn:equalwithxi2}  follow from  \eqref{eqn:in-order}  and the disintegration of $\pi^*$, $\sigma$, with respect to $\sigma$, $\mu$, respectively. In other words,
	$$
		\int_O \mathbb{E}\big[ J_{\psi^*} (x, B_\xi^x) \big]\mu(dx)=\int_O J_{\psi^*}(x,x)\mu(dx)
	$$
	and (\ref{eqn:equalwithxi}) follows since $J_{\psi^*} (x, B_\xi^x)\leq J_{\psi^*}(x,x)$, and
	$$
		\int_{O\times O} J_{\psi^*}(x,y){\pi}^*(dx,dy)=\int_{O\times O} \mathbb{E}\big[ J_{\psi^*} (x,B_{\tau^*-\xi}^y)\big] \sigma(dx,dy)=\int_{O\times O} \mathbb{E}\big[ J_{\psi^*} (x,y)\big] \sigma(dx,dy)
	$$
	and (\ref{eqn:equalwithxi2}) follows similarly.
\end{proof}

\section{A Stochastic Twist Condition}\label{sec:ST} 
We now start the discussion of our results on uniqueness and characterization of the optimal Brownian stopping time. In this section, 
	we give a stochastic version of the twist condition  \eqref{eqn:twist} on the cost in the deterministic optimal transport theory. This condition  will allow us to prove later that the optimal stopping time is uniquely given by the first hitting time to the contact set. 

	\begin{definition}\label{def:ST}
		We say that $c$ satisfies the stochastic twist ({\bf ST}) condition at $(x,y)$ if for each stopping time $\xi\leq \tau_O$,
\begin{align}\label{eqn:ST}
			\mathbb{E}\big[\nabla_xc(x,B^y_{\xi})\big] = \nabla_xc(x,y) \quad \Longrightarrow	\quad 	\xi=0.
\end{align}

	\end{definition} 
 Notice that this stochastic twist condition {\bf ST} is not a direct generalization of the usual twist condition in optimal transport theory.  In particular, the quadratic cost $c(x,y)=|x-y|^2$ does not satisfy {\bf ST}, because $\nabla_x |x-y|^2 = 2(x-y)$, therefore the equality in \eqref{eqn:ST} holds for any $\tau \ge 0$ due to the martingale property.

	We now collect some examples of costs that satisfy {\bf ST}:
	\begin{itemize}
		\item	Our most important example of a cost satisfying {\bf ST} is the distance function $c(x,y)=|x-y|$ for dimensions $d\geq 2$ and points $x\not=y$.  The gradient is $\nabla_xc(x,y)=\frac{x-y}{|x-y|}\in S^{d-1}$, valued in the unit sphere. In particular we have $\nabla_xc(x,y)\cdot \nabla_xc(x,y) =1$ and $\mathbb{E}\big[\nabla_xc(x,y)\cdot \nabla_xc(x,B_\sigma^y)\big]<1$ for any stopping time $\sigma>0$, which implies {\bf ST}. 
		Note that the same argument shows that any (differentiable) Riemannian distance function $d(x, y)$ satisfies {\bf ST}, since $\nabla_x d(x, y)$ is always a unit tangent vector at $x$.

		\item A more general class of costs with {\bf ST} can be described by the local condition: for each $x$,  there exist convex functions $f_x$ such that  
		\begin{align}\label{eqn:local_twist}
 			y\mapsto  f_x\big(\nabla_x c (x, y)\big) \ {\rm \ is\ strictly\ superharmonic}.
		\end{align}
		In this case, we have for $\sigma\not=0$
		\begin{align*}
			f_x\big(\mathbb{E}\big[\nabla_x c(x,B^y_\sigma)\big]\big)\leq&\ \mathbb{E}\big[f_x\big(\nabla_x c(x,B^y_\sigma)\big)\big]\\
			<&\ f_x\big(\nabla_x c(x,y)\big).
		\end{align*}

		\item A simple subclass of the previous example are the separable costs
		$$
			c(x,y)=g(x) h(y)
		$$
		that satisfy $\nabla g(x)\not= 0$ and $y\mapsto h(y)$ is either strictly superharmonic or strictly subharmonic.   In either case, $\nabla_x c(x,y)=\nabla g(x) h(y)$, and  we select $f_x(z)= \pm\nabla g(x)\cdot z$ where the sign is positive if $h$ is superharmonic and negative if $h$ is subharmonic.  

		\item For cost functions that only satisfy
		$$
			\Delta_y\nabla_xc(x,y)\not={\bf 0} \ {\rm for\ all}\ (x,y)\in O\times O,
		$$
		a localized version of {\bf ST} holds, i.e.\ there is $\delta>0$ such that if $0 \le \tau \le \delta$ and (\ref{eqn:ST}) then $\tau=0$. Note however, that  in dimension one, this condition is sufficient to imply {\bf ST} (i.e., that\ $\Delta_y\nabla_xc(x,y)=c_{yyx}(x,y)>0$, which appears in \cite{hobson2012model} and \cite{HLT2015}). 
	\end{itemize}  

\begin{remark}
 One can also consider a version of the stochastic twist condition for martingale transport:
 	\begin{definition}\label{def:ST-martingale}
		We say that $c$ satisfies the martingale twist ({\bf MT}) condition at $(x,y)$ 
if for each probability measure $\sigma$ such that $\delta_y \prec_C \sigma$ (for the convex order), we have
\begin{align}\label{eqn:ST-martingale}
	\int_O \nabla_xc(x,z) \sigma(dz)  = \nabla_xc(x,y)
		& \quad \Longrightarrow	\quad 	\sigma=\delta_y.
\end{align}
\end{definition} 
The above examples for {\bf (ST)} hold for {\bf (MT) } if we  replace the superharmonicity, subharmonicity with concavity, convexity, respectively. In particular, the distance cost $c(x, y) = |x-y|$ is {\bf (MT)}, and this property was crucially used in \cite{ghoussoub2015structure} to prove uniqueness and structure of optimal martingale transport under various conditions. 
\end{remark}

\section{Uniqueness of the Monge solution: optimal stopping as a hitting time}\label{sec:hitting_time}

The stochastic twist condition {\bf (ST)} allows us to prove uniqueness of the optimal Brownian martingale and characterize it as the first hitting time to a barrier set. 
 We will assume the technical assumption $\mu(\partial {\rm supp\,}\mu)=0$ and the simplifying structural assumptions that $\mu\wedge\nu =0$, which in most cases can be omitted by applying Lemma \ref{lem:stayput}.
   
\begin{theorem}\label{thm:hitting-unique}
Suppose, in addition to the assumptions of Theorem \ref{thm:strong_dual}, that $x\mapsto c(x,y)$ is $C^1$ and $c$ satisfies the stochastic twist condition ({\bf ST}) for all $(x,y)\in O\times O$.  
Assume further that $\mu \ll Leb$, $\mu(\partial {\rm supp\,}\mu)=0$,  and $\mu\wedge\nu =0$.
Then,  there exists a unique optimal stopping time that is given by
		\begin{align}\label{eqn:optimal_hitting}
			\eta:=\inf\{t;\ J_{\psi^*}(B_0,B_t)=\psi^*(B_t)-c(B_0,B_t) \}
		\end{align}
		where $\psi^*$ is the dual optimizer of Theorem \ref{thm:strong_dual}, and $J_{\psi^*}$ is the value function satisfying \eqref{eqn:J-psi}. 
\end{theorem}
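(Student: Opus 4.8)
The plan is to show that \emph{every} optimal (possibly randomized) stopping time $\tau^*$ coincides a.s.\ with $\eta$; since $\eta$ is built only from the dual optimizer $\psi^*$ of Theorem~\ref{thm:strong_dual} (and not from $\tau^*$), this yields at once uniqueness and the hitting-time characterization. From Theorem~\ref{thm:verification} we already have $\tau^*\geq\eta$ and that $B_{\tau^*}$ lies in the contact set $R=\{(x,y):J_{\psi^*}(x,y)=\psi^*(y)-c(x,y)\}$ a.s.; since $\eta$ is the first entrance time into the $B_0$-section of $R$ (which is closed in $y$, as $J_{\psi^*}(x,\cdot)$ is harmonic off $R_x$), we also have $(B_0,B_\eta)\in R$ a.s. It therefore suffices to prove $\tau^*\leq\eta$, i.e.\ that the restarted excursion $\kappa:=\tau^*-\eta$ vanishes.

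First I would localize at time $\eta$. Conditioning on $(B_0,B_\eta)=(x,y)$ and applying Theorem~\ref{thm:verification} with $\xi=\eta$ (legitimate since $\delta_x\prec_{SH}{\rm law}(B^x_\eta)\prec_{SH}\pi^*_x$, as $\eta\leq\tau^*$), identity \eqref{eqn:equalwithxi2} gives: for $\sigma:={\rm law}(B_0,B_\eta)$-a.e.\ $(x,y)$, the excursion law $\pi^*_{x,y}:={\rm law}(B^y_\kappa)$ satisfies $\int_O J_{\psi^*}(x,z)\,\pi^*_{x,y}(dz)=J_{\psi^*}(x,y)$. Since $y\in R_x$ and ${\rm supp}\,\pi^*_{x,y}\subseteq R_x$, on both sides $J_{\psi^*}(x,\cdot)$ may be replaced by $\psi^*(\cdot)-c(x,\cdot)$.

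The heart of the argument is a first-order, envelope-type comparison in the initial variable. Fix such a pair $(x,y)$ and, for $x'$ near $x$, use the two competitors $\tau=0$ and $\tau=\kappa$ in the supremum defining $J_{\psi^*}(x',y)$, which yield the lower bounds $J_{\psi^*}(x',y)\geq\psi^*(y)-c(x',y)$ and $J_{\psi^*}(x',y)\geq\int_O[\psi^*(z)-c(x',z)]\,\pi^*_{x,y}(dz)$. Both right-hand sides are $C^1$ in $x'$ (differentiating under the integral, using boundedness and continuity of $\nabla_x c$ on $\overline O\times\overline O$), and by the previous paragraph both equal $J_{\psi^*}(x,y)$ at $x'=x$; hence both touch $x'\mapsto J_{\psi^*}(x',y)$ from below at $x$. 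Provided $x'\mapsto J_{\psi^*}(x',y)$ is differentiable at $x$, the two touching functions must share its gradient there, forcing $\int_O\nabla_x c(x,z)\,\pi^*_{x,y}(dz)=\nabla_x c(x,y)$, i.e.\ $\mathbb{E}\big[\nabla_x c(x,B^y_\kappa)\big]=\nabla_x c(x,y)$. The stochastic twist condition (\textbf{ST}) at $(x,y)$ then gives $\kappa=0$. As this holds for $\sigma$-a.e.\ $(x,y)$, we obtain $\tau^*=\eta$ a.s., and the residual structural hypotheses $\mu\wedge\nu=0$ and $\mu(\partial\,{\rm supp\,}\mu)=0$ remove the remaining degeneracies and give the general statement via Lemma~\ref{lem:stayput}.

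The main obstacle is precisely the pointwise differentiability of $x'\mapsto J_{\psi^*}(x',y)$ at the base point $x$, for $\sigma$-a.e.\ $(x,y)$. The Lipschitz dependence of $c$ on its first variable makes $J_{\psi^*}(\cdot,y)$ Lipschitz uniformly in $y$, hence differentiable at Lebesgue-a.e.\ $x'$ for each fixed $y$; upgrading this to an exceptional set that is $\sigma$-negligible is the delicate point, and is where the dynamic programming principle for $J_{\psi^*}$, together with $\mu\ll{\rm Leb}$, enters. Everything else is soft: $\psi^*\in\mathcal{B}_D$ is bounded (Proposition~\ref{lem:BD_properties}), so $\psi^*(B^y_\kappa)\in L^1$ and the competitors above are admissible, and the touching-from-below step is elementary calculus once differentiability of $J_{\psi^*}(\cdot,y)$ is in hand.
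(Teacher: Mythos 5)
The gap you flag at the end is real, and it is exactly where the paper's proof does its heavy lifting: you need differentiability of $x'\mapsto J_{\psi^*}(x',y)$ at the specific base points $(x,y)$ chosen $\sigma$-a.e., but Rademacher differentiability of a Lipschitz function only gives differentiability for Lebesgue-a.e.\ $x$ for each fixed $y$, and the conditional law $\sigma_x$ of $B_\eta$ is concentrated on the contact set $R_x$, which in general is singular with respect to Lebesgue measure. Neither Fubini, nor $\mu\ll\mathrm{Leb}$, nor the dynamic programming principle closes this by itself. The paper sidesteps pointwise differentiability at $(x,y)$ entirely, and the two missing ingredients in your sketch are (i) the harmonicity of $y\mapsto J_{\psi^*}(x,y)$ in a ball around $x$ (Lemma~\ref{lem:Jharmonic}, where the hypothesis $\mu\wedge\nu=0$ enters), which together with the Lipschitz bound of Lemma~\ref{lem:J_Lipschitz} makes $(x,y)\mapsto J_{\psi^*}(x,y)$ locally Lipschitz near the diagonal and hence Rademacher-differentiable a.e.\ there; and (ii) the representation $J_{\psi^*}(x+h,x)=\mathbb{E}\bigl[J_{\psi^*}(x+h,B^x_\xi)\bigr]$ for a small hitting time $\xi$ with absolutely continuous exit law, which allows one to differentiate under the integral (Lemma~\ref{lem:diffintegral}).

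From there the order of operations is the opposite of yours: instead of conditioning on $(B_0,B_\eta)=(x,y)$ and differentiating at that point, the paper first differentiates the integrated quantities $h\mapsto \mathbb{E}\bigl[J_{\psi^*}(x+h,B^x_{\zeta})\bigr]$ at $h=0$ by a sandwich between $J_{\psi^*}(x+h,x)$ and $\mathbb{E}\bigl[\psi^*(B^x_{\tau^*})-c(x+h,B^x_{\tau^*})\bigr]$ (both differentiable, both equal at $h=0$), and only then localizes: in the main proof one takes $\zeta_\epsilon$ to be $\eta$ on $\{B_\eta\in V_\epsilon(y)\}$ and $\tau^*$ otherwise, applies the lemma, divides by $\mathbb{P}[B^x_\eta\in V_\epsilon(y)]$ and sends $\epsilon\to 0$ to recover your desired conditional identity $\mathbb{E}\bigl[\nabla_x c(x,B^y_\kappa)\bigr]=\nabla_x c(x,y)$ for $\sigma$-a.e.\ $(x,y)$, after which \textbf{(ST)} applies exactly as you describe. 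So your skeleton (verification theorem, contact set, envelope/touching-from-below, \textbf{(ST)}, and Lemma~\ref{lem:stayput} to remove $\mu\wedge\nu=0$) matches the paper, but the touching-from-below step cannot be made rigorous pointwise at $(x,y)$; it has to be carried out at the level of expectations using the harmonicity of $J_{\psi^*}(x,\cdot)$ near $x$, and then localized.
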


We will need several technical lemmata that address differentiability issues for $J_{\psi^*}(x,y)$ in our proof of Theorem~\ref{thm:hitting-unique}.
	The dynamic programming principle for $J_\psi$ allows us easily verify the following remarkable Lipschitz continuity. 
	\begin{lemma}\label{lem:J_Lipschitz}
 		Assume that for each $y \in \overline{O}$, we have that $\| x\mapsto c(x, y)\|_{Lip} \le K$ for some constant $K>0$ (independent on $y$). 
		Then for each $y \in \overline{O}$, 
			$\| x\mapsto J_\psi (x, y)\|_{Lip} \le K.$
	\end{lemma}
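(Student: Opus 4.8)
The plan is to exploit the definition of $J_\psi$ as a supremum over stopping times, since for a supremum of a family of functions that are all $K$-Lipschitz in $x$, the supremum is again $K$-Lipschitz. Concretely, fix $y \in \overline{O}$ and two points $x_1, x_2 \in \overline{O}$. For any stopping time $\tau \le \tau_O$ for Brownian motion starting at $y$, I would write
\[
	\mathbb{E}\big[\psi(B^y_\tau) - c(x_1, B^y_\tau)\big] - \mathbb{E}\big[\psi(B^y_\tau) - c(x_2, B^y_\tau)\big] = \mathbb{E}\big[c(x_2, B^y_\tau) - c(x_1, B^y_\tau)\big],
\]
and bound the right-hand side pointwise by $\|z \mapsto c(z, B^y_\tau)\|_{Lip}\, |x_1 - x_2| \le K|x_1-x_2|$ using the hypothesis that $x \mapsto c(x,w)$ is $K$-Lipschitz uniformly in $w$. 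Taking expectations preserves this bound since it holds almost surely.

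Next I would take the supremum over $\tau \le \tau_O$. From the displayed inequality,
\[
	\mathbb{E}\big[\psi(B^y_\tau) - c(x_1, B^y_\tau)\big] \le \mathbb{E}\big[\psi(B^y_\tau) - c(x_2, B^y_\tau)\big] + K|x_1 - x_2| \le J_\psi(x_2, y) + K|x_1-x_2|,
\]
and taking the supremum over $\tau$ on the left gives $J_\psi(x_1, y) \le J_\psi(x_2, y) + K|x_1 - x_2|$. By symmetry in $x_1, x_2$ one obtains $|J_\psi(x_1,y) - J_\psi(x_2, y)| \le K|x_1 - x_2|$, which is exactly the claim $\|x \mapsto J_\psi(x,y)\|_{Lip} \le K$. (If one is cautious about whether the suprema are attained, the $\varepsilon$-optimal stopping time argument gives the same conclusion after letting $\varepsilon \to 0$; this is the only mild technical point, and it is routine.)

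I do not anticipate a genuine obstacle here: the "dynamic programming principle" alluded to in the statement is really just the sup-over-stopping-times representation \eqref{eqn:J_psi}, and the Lipschitz bound is inherited directly from the integrand because the perturbation $c(x_2, \cdot) - c(x_1, \cdot)$ is bounded by $K|x_1 - x_2|$ regardless of the path. The only thing to be careful about is that the estimate is uniform in $y$, which it is, precisely because the hypothesis gives a single constant $K$ valid for all $y \in \overline{O}$.
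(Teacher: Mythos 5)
Your proof is correct and follows exactly the same approach as the paper: observe that for each fixed stopping time $\tau$ the map $x \mapsto \mathbb{E}[\psi(B^y_\tau) - c(x,B^y_\tau)]$ is $K$-Lipschitz, then use the fact that a supremum of $K$-Lipschitz functions is $K$-Lipschitz. You supply slightly more detail than the paper's terse argument, but the idea is identical.
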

	
\begin{proof}
 This is an easy conclusion from the definition of $J_\psi$ by stopping times, \eqref{eqn:J-psi}, because 
 $$
 \| x\mapsto \mathbb{E} \left[ \psi (B^y_\tau) - c(x, B^y_\tau)  \right]\|_{Lip}\le K,$$ for each $\tau$. 
The supremum over those Lipschitz functions is again Lipschitz, with the same Lipschitz constant $K$. 
\end{proof}

 The following two lemmas deal with the differentiability of two relevant integrals (expected values).  We first verify harmonicity of $y \mapsto J_{\psi^*} (x, y)$ in a small neighborhood. 
\begin{lemma}\label{lem:Jharmonic}
Use the same assumptions and notation as in Theorem~\ref{thm:hitting-unique}. 
Then, for each $x\in {\rm int}({\rm supp\,}\mu)$, 
 $y   \mapsto J_{\psi^*} (x, y)$ is harmonic in an open neighborhood around $x$.  
\end{lemma}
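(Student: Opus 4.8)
The plan is to view $y\mapsto J_{\psi^*}(x,y)$ as the solution of the obstacle problem, to identify its Riesz (Laplacian) measure, and then to show — using the verification identities of Theorem~\ref{thm:verification} together with the mutual singularity $\mu\wedge\nu=0$ — that this measure puts no mass near $x$, which forces harmonicity there.

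First the reduction. For fixed $x\in\overline O$, the function $y\mapsto J_{\psi^*}(x,y)$ is by \eqref{eqn:J_psi} the superharmonic envelope of $y\mapsto\psi^*(y)-c(x,y)$, hence superharmonic on $O$; write $\lambda_x:=-\Delta_yJ_{\psi^*}(x,\cdot)\ge 0$ for its Riesz measure, so that $J_{\psi^*}(x,\cdot)$ is the sum of a harmonic function and the Green potential of $\lambda_x$ on $O$. It then suffices to produce $\rho>0$ such that $\lambda_x$ gives zero mass to every set of positive capacity inside $B_\rho(x)$: removing the (polar) set where $\lambda_x$ might still be supported, $J_{\psi^*}(x,\cdot)$ is harmonic on $B_\rho(x)$ by a removable-singularity argument.

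Next, an exact martingale identity for $\mu$-a.e. $x$. Since $\mu\wedge\nu=0$, there are disjoint Borel sets carrying $\mu$ and $\nu$; hence for $\mu$-a.e.\ $x$ the conditional law $\pi^*_x$ of the stopped path $B^x_{\tau^*}$ is carried by the $\nu$-set, so $\pi^*_x(\{x\})=0$ and therefore $\tau^*>0$ a.s.\ from such $x$. For any exit time $\xi=\tau_{B_r(x)}$ the law $\sigma_x$ of $B^x_{\xi\wedge\tau^*}$ satisfies $\delta_x\prec_{SH}\sigma_x\prec_{SH}\pi^*_x$; feeding $\sigma(dx,dy)=\sigma_x(dy)\mu(dx)$ into \eqref{eqn:in-order} of Theorem~\ref{thm:verification} and using the pointwise superharmonic inequality $\mathbb E[J_{\psi^*}(x,B^x_{\xi\wedge\tau^*})]\le J_{\psi^*}(x,x)$ forces, for $\mu$-a.e.\ $x$ and every rational $r$,
\begin{align*}
\mathbb E\big[J_{\psi^*}(x,B^x_{\tau_{B_r(x)}\wedge\tau^*})\big]=J_{\psi^*}(x,x).
\end{align*}
By the It\^o--Tanaka formula for the potential $J_{\psi^*}(x,\cdot)$, this equality says that the continuous additive functional with Revuz measure $\lambda_x$ does not grow up to $\tau_{B_r(x)}\wedge\tau^*$. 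Since $\tau^*>0$ a.s., for small $r$ the expected amount of this functional accumulated before exiting $B_r(x)$ and before $\tau^*$ is bounded below by $\int G_{B_\rho(x)}(x,\cdot)\,d\lambda_x$ on a ball $B_\rho(x)$, where the Green function of the ball is strictly positive; hence $\lambda_x$ charges no positive-capacity subset of $B_\rho(x)$, and by the reduction step $J_{\psi^*}(x,\cdot)$ is harmonic on $B_\rho(x)$. Finally we pass from $\mu$-a.e.\ $x$ to every $x\in{\rm int}({\rm supp}\,\mu)$: since $\mu\ll{\rm Leb}$ the set of such ``good'' points is dense in ${\rm int}({\rm supp}\,\mu)$, by Lemma~\ref{lem:J_Lipschitz} the maps $x\mapsto J_{\psi^*}(x,\cdot)$ are $K$-Lipschitz uniformly in $y$ and $x\mapsto c(x,y)$ is $C^1$, so $J_{\psi^*}(x_n,\cdot)\to J_{\psi^*}(x,\cdot)$ uniformly along a good sequence $x_n\to x$, and — checking that $\rho$ may be taken uniform on compact subsets of ${\rm int}({\rm supp}\,\mu)$, since it depends only on a lower bound on $\mathbb E[\tau^*]$ — a uniform limit of harmonic functions is harmonic.

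The main obstacle is the middle step: converting the $\mu$-averaged equality of Theorem~\ref{thm:verification} into the genuinely pointwise statement that $\lambda_x$ vanishes on a ball around $x$. This needs the It\^o--Tanaka/Revuz-measure calculus for the non-smooth potential $J_{\psi^*}(x,\cdot)$, the input $\tau^*>0$ a.s.\ (which is exactly where $\mu\wedge\nu=0$ enters), and a careful estimate showing that the stopped additive functional still detects all the mass of $\lambda_x$ near $x$; the uniformity of $\rho$ used in the last step is a further, more routine, technical point.
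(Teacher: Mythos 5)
The overall plan is compatible with the paper's, and both approaches hinge on feeding the exit time of a small ball into Theorem~\ref{thm:verification} to force the dynamic-programming inequality to become an equality. But the paper argues more elementarily: it compares $J_{\psi^*}(x,\cdot)$ with its harmonic replacement $u(y)=\mathbb E[J_{\psi^*}(x,B^y_{\xi^\epsilon})]$ on a ball $V_\epsilon(x)$, uses \eqref{eqn:meanvalueJ} to get $J_{\psi^*}(x,x)=u(x)$, and deduces $J_{\psi^*}(x,\cdot)=u$ on the ball from superharmonicity plus a mean-value/minimum-principle argument and lower semicontinuity. Your Riesz-measure/It\^o--Tanaka route is a heavier machine doing the same job and can in principle be made to work, so the difference there is stylistic.

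The genuine gap is in what you extract from $\mu\wedge\nu=0$. You deduce only $\pi_x^*(\{x\})=0$, hence $\tau^*>0$ a.s.\ from $x$. But the step ``the expected additive functional accumulated before $\tau_{B_r(x)}\wedge\tau^*$ is bounded below by $\int G_{B_\rho(x)}(x,\cdot)\,d\lambda_x$'' is \emph{not} implied by $\tau^*>0$ a.s. What it actually requires is $\tau_{B_\rho(x)}\le \tau^*$ a.s., i.e.\ that the expected occupation measure of the stopped Brownian motion dominates the Green measure of $B_\rho(x)$. If $\tau^*$ can be arbitrarily small with non-negligible probability (so that the stopped path routinely remains near $x$), the occupation density before $\tau_{B_r(x)}\wedge\tau^*$ may vanish on parts of $B_\rho(x)$ where $\lambda_x$ lives, and the equality $\mathbb E[A_{\tau_{B_r(x)}\wedge\tau^*}]=0$ then carries no information about $\lambda_x$ there. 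Concretely: $\tau^*>0$ a.s.\ does not rule out $\tau^*<\tau_{B_\rho(x)}$ with positive probability for every $\rho>0$.

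What is actually needed --- and what the paper uses --- is the stronger consequence that for $\mu$-a.e.\ $x$ one can choose $\epsilon>0$ with $\nu(V_\epsilon(x))=0$; then $\pi_x^*(V_\epsilon(x))=0$ and continuity of Brownian paths gives $\tau_{B_r(x)}\le\tau^*$ a.s.\ for every $r\le\epsilon$. Once you have that ordering, your It\^o--Tanaka computation reduces to $\int_{B_r(x)}G_{B_r(x)}(x,y)\,\lambda_x(dy)=0$, whence $\lambda_x(B_r(x))=0$ and harmonicity follows as you intended. So the fix is localized: replace ``$\pi_x^*(\{x\})=0$, hence $\tau^*>0$'' with ``$\pi_x^*(V_\epsilon(x))=0$ for a ball of radius $\epsilon$ with $\nu(V_\epsilon(x))=0$, hence $\tau_{B_r(x)}\le\tau^*$ for $r\le\epsilon$.'' Without this, the occupation lower bound, and hence the whole middle step, does not hold.
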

\begin{proof}
For $\mu$-a.e.\ $x \in  {\rm int}({\rm supp\,}\mu)$, choose an open ball $V_\epsilon(x)  \subset  {\rm int}({\rm supp\,}\mu)$ centered at $x$, with radius $\epsilon>0$ such that $\nu(V_\epsilon(x))=0$.  For each $r\le\epsilon$, let $\xi^r$ be the first hitting time to $\partial V_r(x)$. 
Define $u : V_\epsilon (x) \to \R$ as 
\begin{align*}
u (y) = \mathbb{E}\left[  J_{\psi^*} (x, B_{\xi^\epsilon}^y )\right] . 
\end{align*}
Because of Markov property of the Brownian motion, $u$ satisfies the mean value property, so, is harmonic. 
Recall that $y   \mapsto J_{\psi^*} (x, y)$ is superharmonic. Therefore, $J(x, y) \ge u(y)$ for all $y \in V_\epsilon(x)$. 
Moreover, because of our assumption 
 $\mu\wedge\nu =0$, 
we have $\xi^r \le \tau^*$, $0\le r\le \epsilon$, for the optimal stopping $\tau^*$  of $\mathcal{P}_c(\mu, \nu)$. Therefore,  from the verification theorem (Theorem~\ref{thm:verification}), we see that for $\mu$-a.e. $x$, 
\begin{align}\label{eqn:meanvalueJ}
 J_{\psi^*} (x, x) = \mathbb{E}\left[J_{\psi^*}(x, B_{\xi^r}^x) \right] = \mathbb{E}\left[J_{\psi^*}(x, B_{\tau^*}^x) \right]. 
\end{align}
In the case $r=\epsilon$, we have $J_{\psi^*}(x, x) = u(x)$. Then, for other $r$, \eqref{eqn:meanvalueJ} and the inequality $J_{\psi^*} \ge u$ imply that $J_{\psi^*} (x, y) = u (y)  $ for a.e. $y \in V_\epsilon(x)$. Now lower semi-continuity of $J_{\psi^*}$ and the inequality $J_{\psi^*} \ge u$ imply $J_{\psi^*} (x, y) = u (y) $ for all $ y \in V_\epsilon(x)$. 
To have the harmonicity for all $x \in  {\rm int}({\rm supp\,}\mu)$, not just $\mu$-a.e., use the (Lipschitz) continuity of $x \mapsto J_{\psi^*} (x, y)$, to extend  this harmonicity to  all $x \in {\rm int}({\rm supp\,}\mu)$. 
This completes the proof. 
\end{proof}
{
\begin{lemma}\label{lem:diffintegral}
Use the same assumptions and notation as in Theorem~\ref{thm:hitting-unique}. 
Let $\tau^*$ be an optimal stopping time of our problem $\mathcal{P}_c (\mu, \nu)$. 
Let $\zeta$ be any stopping time with  $\zeta \le \tau^*$
satisfying 
\begin{align}\label{eqn:expectation-identity}
  \mathbb{E}\left[ J_{\psi^*} (x, B^x_\zeta ) \right]
  = \mathbb{E}\left[ \psi^* ( B^x_\zeta) - c (x, B^x_\zeta)  \right] \quad \hbox{for $\mu$-a.e. $x$}. 
  \end{align}
  (In particular $\zeta=\eta$,  the hitting time defined in \eqref{eqn:optimal_hitting}, satisfies these  from Lemma~\ref{lem:SH_hitting_time} (1).) 
Then, for $\mu$-a.e. $x$
  the functions 
\begin{align*}
& h\mapsto J_{\psi^*} (x+h, x), 
 \quad  h\mapsto  \mathbb{E}\left[ J_{\psi^*} (x+h, B^x_{\zeta})  \right],   \quad \hbox{and}\quad h\mapsto  \mathbb{E}\left[ J_{\psi^*} (x+h, B^x_{\tau^*})  \right]  
\end{align*}
 are differentiable at $h=0$ and 
\begin{align}\label{eqn:equalderivatives}
 & \quad \frac{d}{dh}\Big|_{h=0} J_{\psi^*} (x+h, x)\\\nonumber 
&  = \frac{d}{dh}\Big|_{h=0}  \mathbb{E}\left[ J_{\psi^*} (x+h, B^x_{\zeta})  \right]=  \mathbb{E}\left[ - \nabla_x c (x, B^x_{\zeta})  \right] 
  \\\nonumber
& = \frac{d}{dh}\Big|_{h=0}  \mathbb{E}\left[ J_{\psi^*} (x+h, B^x_{\tau^*})  \right]  =
  \mathbb{E}\left[ - \nabla_x c (x, B^x_{\tau^*})  \right] . 
\end{align}
\end{lemma}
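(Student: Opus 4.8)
The plan is to establish all three differentiability statements simultaneously by a sandwiching argument that exploits the Lipschitz bound of Lemma~\ref{lem:J_Lipschitz}, the obstacle inequality $J_{\psi^*}(x,y) \geq \psi^*(y) - c(x,y)$, the harmonicity obtained in Lemma~\ref{lem:Jharmonic}, and the verification identities of Theorem~\ref{thm:verification}. The key point is that for fixed $x \in {\rm int}({\rm supp\,}\mu)$ and any stopping time $\zeta$ with $\zeta \le \tau^*$, one has the chain of inequalities, valid for every $h$ in a small ball,
\begin{align}\label{eqn:sandwich-plan}
  J_{\psi^*}(x+h,x) \ge \mathbb{E}\big[J_{\psi^*}(x+h, B^x_{\zeta})\big] \ge \mathbb{E}\big[J_{\psi^*}(x+h, B^x_{\tau^*})\big] \ge \mathbb{E}\big[\psi^*(B^x_{\tau^*}) - c(x+h, B^x_{\tau^*})\big],
\end{align}
where the first two inequalities use superharmonicity of $y \mapsto J_{\psi^*}(x+h,y)$ together with $\zeta \le \tau^*$, and the last uses the obstacle constraint. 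By the verification theorem (equations \eqref{eqn:meanvalueJ} and \eqref{eqn:expectation-identity}), at $h=0$ all four quantities in \eqref{eqn:sandwich-plan} are \emph{equal} (using $J_{\psi^*}(x,x) = \mathbb{E}[\psi^*(B^x_{\tau^*}) - c(x,B^x_{\tau^*})]$, which is \eqref{eqn:int-limit} localized, together with \eqref{eqn:equalwithxi} and \eqref{eqn:expectation-identity}). So we have a squeeze: a family of functions pinched between the topmost and bottommost, all agreeing at $h=0$.

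The next step is to compute the derivative of the bottom function. Since $x \mapsto c(x,y)$ is $C^1$ with Lipschitz constant $K$ uniform in $y$ (and hence $\nabla_x c(x,y)$ is bounded), dominated convergence gives that $h \mapsto \mathbb{E}[\psi^*(B^x_{\tau^*}) - c(x+h, B^x_{\tau^*})]$ is differentiable at $h=0$ with derivative $\mathbb{E}[-\nabla_x c(x, B^x_{\tau^*})]$. For the \emph{top} function $h \mapsto J_{\psi^*}(x+h,x)$, one invokes Lemma~\ref{lem:Jharmonic}: for $x$ in the interior of ${\rm supp\,}\mu$, the map $y \mapsto J_{\psi^*}(x,y)$ is harmonic near $x$, hence smooth there, so $h \mapsto J_{\psi^*}(x, x+h)$ is differentiable at $h=0$; combined with Lipschitz continuity in the $x$-slot (Lemma~\ref{lem:J_Lipschitz}) and the harmonic representation $J_{\psi^*}(x,y) = \mathbb{E}[J_{\psi^*}(x, B^y_{\xi^\epsilon})]$ valid for $y$ near $x$, one gets differentiability of $h \mapsto J_{\psi^*}(x+h,x)$ at $h=0$. (Here one may need to argue that the harmonic extension itself depends differentiably on the $x$-parameter; the Poisson-kernel representation over $\partial V_\epsilon(x)$ together with the $C^1$ dependence of $J_{\psi^*}(\cdot,z)$ through the obstacle does this.)

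Now the squeeze closes the argument: if $f_{\rm top}(h) \ge f_\zeta(h) \ge f_{\tau^*}(h) \ge f_{\rm bot}(h)$ with $f_{\rm top}(0) = f_\zeta(0) = f_{\tau^*}(0) = f_{\rm bot}(0)$, and $f_{\rm top}$ and $f_{\rm bot}$ are both differentiable at $0$, then for the derivatives to be consistent with the ordering we must have $f_{\rm top}'(0) \ge f_{\rm bot}'(0)$ \emph{and} $f_{\rm top}'(0) \le f_{\rm bot}'(0)$ — the latter because one can test along the segment from $-h$ to $h$: the difference quotient of $f_{\rm top}$ from the left is $\le$ that of $f_{\rm bot}$ from the left and $\ge$ from the right, or more cleanly, $f_{\rm top} - f_{\rm bot} \ge 0$ with a zero at $h=0$ forces its derivative to vanish there, hence $f_{\rm top}'(0) = f_{\rm bot}'(0)$. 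Therefore $f_{\rm top}'(0) = f_\zeta'(0) = f_{\tau^*}'(0) = f_{\rm bot}'(0) = \mathbb{E}[-\nabla_x c(x, B^x_{\tau^*})]$, and since $\zeta$ was arbitrary subject to \eqref{eqn:expectation-identity}, the same value $\mathbb{E}[-\nabla_x c(x, B^x_{\zeta})]$ is also forced (one runs the identical squeeze with $f_{\tau^*}$ replaced by $f_\zeta$ at the bottom, using \eqref{eqn:expectation-identity} in place of the $\tau^*$-identity, and noting $\mathbb{E}[-\nabla_x c(x,B^x_\zeta)]$ is the derivative of the corresponding obstacle expression). This yields all equalities in \eqref{eqn:equalderivatives}.

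\textbf{Main obstacle.} The delicate point is the differentiability of the top function $h \mapsto J_{\psi^*}(x+h,x)$ at $h=0$ — that is, differentiability of the value function in the \emph{first} variable. Lipschitz continuity alone (Lemma~\ref{lem:J_Lipschitz}) is not enough; one genuinely needs the local harmonicity of Lemma~\ref{lem:Jharmonic} to upgrade regularity in the $y$-variable near the diagonal, and then to transfer it to the $x$-variable. I expect the cleanest route is: write $J_{\psi^*}(x',y) = \int_{\partial V_\epsilon(x)} P_{V_\epsilon(x)}(y,z)\,J_{\psi^*}(x',z)\,dS(z)$ for $y \in V_\epsilon(x)$ and $x'$ near $x$ (valid because the harmonic ball for $x'$ contains a fixed smaller ball once $|x'-x|$ is small — continuity of the contact structure), evaluate at $y=x$, and differentiate under the integral using that $x' \mapsto J_{\psi^*}(x',z)$ is Lipschitz and, on the relevant set where the obstacle is active or via the same harmonic representation, differentiable with derivative controlled by $\nabla_x c$. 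The interchange of $\frac{d}{dh}$ and the expectation/integral in the middle term $\mathbb{E}[J_{\psi^*}(x+h,B^x_\zeta)]$ is then justified by the uniform ($K$-)Lipschitz bound of Lemma~\ref{lem:J_Lipschitz} and dominated convergence, so that its derivative, if it exists, is sandwiched and hence equals the common value.
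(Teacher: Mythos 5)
Your sandwiching skeleton is exactly the paper's: the chain of inequalities
\begin{align*}
J_{\psi^*}(x+h,x)\ \ge\ \mathbb{E}\big[J_{\psi^*}(x+h,B^x_{\zeta})\big]\ \ge\ \mathbb{E}\big[J_{\psi^*}(x+h,B^x_{\tau^*})\big]\ \ge\ \mathbb{E}\big[\psi^*(B^x_{\tau^*})-c(x+h,B^x_{\tau^*})\big]
\end{align*}
with equality at $h=0$ by Theorem~\ref{thm:verification}, together with the elementary differentiability of the bottom end (dominated convergence on $\nabla_x c$), and the squeeze argument that forces the middle functions to be differentiable with the common slope. That is all correct and is the argument the paper gives. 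You also correctly identify the delicate step: differentiability of the \emph{top} function $h\mapsto J_{\psi^*}(x+h,x)$.

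The gap is in how you propose to close that step. Your Poisson-kernel route writes $J_{\psi^*}(x',x)=\int_{\partial V_\epsilon(x)}P_{V_\epsilon(x)}(x,z)\,J_{\psi^*}(x',z)\,dS(z)$ and differentiates under the integral, but this requires differentiability of $x'\mapsto J_{\psi^*}(x',z)$ at $x'=x$ for $dS$-a.e.\ $z\in\partial V_\epsilon(x)$. What Rademacher plus Fubini can deliver (after one shows $J_{\psi^*}$ is jointly Lipschitz near the diagonal) is differentiability for \emph{Lebesgue}-a.e.\ $z$; the sphere is Lebesgue-null, so this does not control the surface integral. Your fallback — ``on the relevant set where the obstacle is active, or via the same harmonic representation'' — is circular, since the harmonic case is precisely what you are trying to establish. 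The paper sidesteps the surface-measure issue by choosing a stopping time $\xi\le\sigma^\epsilon$ (the exit time of $V_\epsilon(x)$) whose stopping distribution $B^x_\xi\sim\rho$ is absolutely continuous with respect to Lebesgue measure, so the Lebesgue-a.e.\ differentiability from Rademacher+Fubini transfers to $\mathbb{E}\big[J_{\psi^*}(x+h,B^x_\xi)\big]$ by dominated convergence, and then harmonicity of $y\mapsto J_{\psi^*}(x+h,y)$ on $V_\epsilon(x)$ (valid for small $h$ since $\mu\wedge\nu=0$) identifies this expectation with $J_{\psi^*}(x+h,x)$. Two further ingredients the paper makes explicit and you leave implicit: (i) the joint local Lipschitz continuity of $(x,y)\mapsto J_{\psi^*}(x,y)$ near the diagonal, obtained by combining Lemma~\ref{lem:J_Lipschitz} with interior gradient estimates for the harmonic functions of Lemma~\ref{lem:Jharmonic}, is what lets Rademacher apply; and (ii) the hypothesis $\mu\ll\mathrm{Leb}$ is what makes Fubini yield a statement ``for $\mu$-a.e.\ $x$.'' If you replace your Poisson-kernel step with this $\xi$-with-absolutely-continuous-law device, the rest of your argument goes through as written.
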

\begin{proof}
From  Lemma~\ref{lem:Jharmonic} and the gradient estimates of harmonic functions, the function $y \mapsto J_{\psi^*}(x, y)$ is locally Lipschitz in an open neighborhood of  each  $x \in {\rm int}({\rm supp\,}\mu)$.  
Here,  the local Lipschitz constant is uniform in $x$  in a neighborhood, since $c$ is continuous and the function $J_{\psi^*}$ is bounded in $O\times O$ (which follows from the boundedness of $\psi^* \in \mathcal{B}_D$.  
Combining this with  the fact  $\| x\mapsto J_{\psi^*}(x, y)\|_{Lip} \le C$ (from Lemma~\ref{lem:J_Lipschitz}) for each $y$ we get the function $(x, y)\mapsto 
J_{\psi^*}$ is locally Lipschitz on an open neighborhood $\mathcal{N}$ of the diagonal set $\{ (x, x) \ | \ x \in {\rm int}({\rm supp\,}\mu)\}$, contained in ${\rm int}({\rm supp\,}\mu)\times {\rm int}({\rm supp\,}\mu)$. 
  By Rademacher's theorem  $J_{\psi^*}$  is differentiable a.e. in the same set.  By Fubini's theorem and $\mu \ll Leb$, this implies that for  $\mu$-a.e. $x$,  the function $h \mapsto J_{\psi^*} (x+h, y)$ is differentiable at $h=0$ for a.e. $y$ in an open neighborhood of $x$. Because of the assumptions $\mu(\partial {\rm supp\,}\mu )=0$ and $\mu \ll Leb$, we can without loss of generality assume  that $x \in {\rm int\,} ({\rm supp\,}\mu)$, and the $\epsilon$-ball $V_\epsilon (x) \subset {\rm int} ({\rm supp\,}\mu)$ and  for all sufficiently small $h$,  the function $y \in V_\epsilon(x)  \mapsto J(x+h, y)$ is harmonic.

Choose  a stopping time $\xi$ such that  $\xi \le \sigma^\epsilon$ for the first hitting time $\sigma^\epsilon$ to the sphere $\partial V_\epsilon (x)$ and  
$B_\xi^x \sim \rho \ll Leb$. 
Then from the bound  $\| x\mapsto J_{\psi^*}(x, y)\|_{Lip} \le C$ and the dominated convergence theorem, we see that 
\begin{align*}
\frac{d}{dh}\Big|_{h=0}  \mathbb{E}\left[ J_{\psi^*} (x+h, B_\xi^x)\right] = \mathbb{E}\left[  \frac{d}{dh}\Big|_{h=0} J_{\psi^*} (x+h,B_\xi^x )\right]. 
\end{align*}
In particular, the derivative  exists. 
We now use the harmonicity of $y \in V_\epsilon(x) \mapsto J_{\psi^*} (x+h, y)$ for sufficiently small $h$ to see 
 \begin{align*}
  J_{\psi^*} (x+h, x)= \mathbb{E}\left[ J_{\psi^*} (x+h, B^x_\xi)  \right].
  \end{align*}
So, $h\mapsto J_{\psi^*}(x+h, x)$ is differentiable at $h=0$.

 As $\mu \wedge \nu =0$ and $V_\epsilon (x) \in {\rm int}({\rm supp\,}\mu)$, we see that $\sigma^\epsilon  \le \tau^*$, thus $\xi \le \tau^*$. 
 This order implies 
\begin{align*}
 J_{\psi^*}(x+h, x) = \mathbb{E}\left[ J_{\psi^*} (x+h, B^x_\xi)  \right] 
 & \ge   \mathbb{E}\left[ J_{\psi^*} (x+h, B^x_{\tau^*})  \right]  \\
 & \ge   \mathbb{E}\left[ \psi^* ( B^x_{\tau^*}) - c (x+h, B^x_{\tau^*})  \right] .
 \end{align*}
 Note that from Theorem~\ref{thm:verification} (in particular, \eqref{eqn:equalwithxi}) we have 
 $$ J_{\psi^*} (x, x) = \mathbb{E}\left[ J_{\psi^*} (x, B^x_\xi)  \right]=    \mathbb{E}\left[ \psi^* ( B^x_{\tau^*}) - c (x, B^x_{\tau^*})  \right] .$$
  Therefore, at $h=0$ the above inequalities become equalities. 
Since  $J_{\psi^*}(x+h, x) = \mathbb{E}\big[ J_{\psi^*} (x+h, B^x_\xi)  \big]$ and $\mathbb{E}\left[ \psi^* ( B^x_{\tau^*}) - c (x+h, B^x_{\tau^*})  \right] $ are both differentiable at $h=0$, 
we see that  the function  
\begin{align*}
\hbox{$h\mapsto  \mathbb{E}\left[ J_{\psi^*} (x+h, B^x_{\tau^*})  \right]$  is differentiable at $h=0$,}
\end{align*} and  all these have the same derivatives:
\begin{align*}
&\frac{d}{dh}\Big|_{h=0} J_{\psi^*} (x+h, x) \\&= 
\frac{d}{dh}\Big|_{h=0} \mathbb{E}\left[ J_{\psi^*} (x+h, B^x_{\tau^*})  \right] = \frac{d}{dh}\Big|_{h=0}\mathbb{E}\left[ \psi^* ( B^x_{\tau^*}) - c (x+h, B^x_{\tau^*})  \right] \\&=
\frac{d}{dh}\Big|_{h=0}\mathbb{E}\left[- c (x+h, B^x_{\tau^*})  \right] =\mathbb{E}\left[- \nabla_x c (x, B^x_{\tau^*})  \right]  .
\end{align*}

For $\zeta$ it is not clear whether $\xi \le \zeta$. Therefore, to examine the derivative of the function $  \mathbb{E}\left[ J_{\psi^*} (x+h, B^x_\zeta)  \right]$ we notice that 
\begin{align*}
  J_{\psi^*} (x+h, x) \ge  \mathbb{E}\left[ J_{\psi^*} (x+h, B^x_\zeta)  \right]    \ge \mathbb{E}\left[ J_{\psi^*} (x+h, B^x_{\tau^*} ) \right]
\end{align*}
with equality at $h=0$ (because of  \eqref{eqn:equalwithxi}). This shows that  $h \mapsto \mathbb{E}\left[ J_{\psi^*} (x+h, B^x_\zeta)  \right]$ is differentiable at $h=0$  
and
\begin{align*}
 \frac{d}{dh}\Big|_{h=0}\mathbb{E}\left[ J_{\psi^*} (x+h, B^x_\zeta)  \right]  =\frac{d}{dh}\Big|_{h=0}\mathbb{E}\left[ J_{\psi^*} (x+h, B^x_{\tau^*})  \right] . 
\end{align*}
Moreover, 
\begin{align*}
  \mathbb{E}\left[ J_{\psi^*} (x+h, B^x_\zeta ) \right]
  \ge \mathbb{E}\left[ \psi^* ( B^x_\zeta) - c (x+h, B^x_\zeta)  \right], 
  \end{align*}
  and from the assumption on $\zeta$ 
  we have equality at $h=0$. This verifies 
\begin{align*}
\frac{d}{dh}\Big|_{h=0}\mathbb{E}\left[ J_{\psi^*} (x+h, B^x_\zeta)  \right]  &=   \frac{d}{dh}\Big|_{h=0}\mathbb{E}\left[ \psi^* ( B^x_\zeta) - c (x+h, B^x_\zeta)  \right] \\
& = \frac{d}{dh}\Big|_{h=0}\mathbb{E}\left[ - c (x+h, B^x_\zeta)  \right]=  \mathbb{E}\left[ - \nabla_x c (x, B^x_\zeta)  \right].
\end{align*}
All together these complete the proof. 
\end{proof}

}

We now use the above lemmata to give the proof of Theorem \ref{thm:hitting-unique}.
\begin{proof}[\bf Proof of Theorem~\ref{thm:hitting-unique}]
Let $\pi$ denote the probability measure on $\overline{O}\times \overline{O}$ corresponding to $B_\eta$, that is, $B_\eta^x \sim \pi_x$ for $\mu$-a.e. $x$, where $\pi_x$ is the disintegration $\pi (dx,dy)=\pi_x (dy)\mu(dx).$ Fix a pair  $(x,y)$ chosen $\pi$-a.e., in particular, for $x$ to satisfy the results of Lemma~\ref{lem:diffintegral} with $\zeta=\eta$.
Then,  consider a small ball $V_\epsilon (y)$ of radius $\epsilon >0$ around $y$.  
Define a stopping time $\zeta_\epsilon$ as 
\begin{align*}
\zeta_\epsilon = \begin{cases}
    \eta  & \text{ if $B_\eta^x \in V_\epsilon (y)$}, \\
    \tau^*  & \text{otherwise}.
\end{cases} 
\end{align*}
 Notice that $\zeta_\epsilon$ satisfies $\eta \le \zeta_\epsilon \le \tau^*$ and \eqref{eqn:expectation-identity} (e.g. from Lemma ~\ref{lem:SH_hitting_time}(1)) so that we can apply Lemma~\ref{lem:diffintegral}. Then, \eqref{eqn:equalderivatives} gives 
\begin{align*}
  \mathbb{E}\left[  \nabla_x c (x, B^x_{\zeta_\epsilon})  \right] 
  =
 \mathbb{E}\left[  \nabla_x c (x, B^x_{\tau^*})  \right] . 
\end{align*}
From this we see 
\begin{align*}
 0 &=\frac{1}{\mathbb{P} [  B^x_\eta \in V_\epsilon (y) ] }  \Big[\mathbb{E}\left[  \nabla_x c (x, B^x_{\tau^*})  \right] - \mathbb{E}\left[  \nabla_x c (x, B^x_{\zeta_\epsilon}) \right] \Big] \\
 & =  \mathbb{E}\big[  \nabla_x c (x, B^x_{\tau^*}) \ | \ B^x_\eta \in V_\epsilon (y)   \big] -  \mathbb{E} \big[\nabla_x c (x, B^x_\eta)  \ | \  B^x_\eta \in V_\epsilon (y) \big].
 \end{align*}
Letting $\epsilon \to 0$, we see that for $\xi=\tau^*-\eta$ restricted to the paths where $y=B^x_\eta$,
\begin{align*}
 \mathbb{E}\big[   \nabla_x  c(x, B^y_{\xi}) \big] -    \nabla_x c(x, y) =0\ \hbox{for $\pi$-a.e.\ $(x,y)$}.
\end{align*}
 We apply the stochastic twist condition {\bf (ST)} in Definition~\ref{def:ST}, and we get $\xi=0$. Since this holds for $\pi$-a.e. $(x, y)$, this implies $\tau^* = \eta$, completing the proof.  
\end{proof}

\section{The case of the distance function}\label{sec:distance} 
	We now consider the distance  cost $c(x, y) =|x-y|$. We focus on the multi-dimensional case $d\ge 2$, because for the $1$-dimensional case ($d=1$) our problem is equivalent to the martingale optimal transport and the uniqueness and structure of the optimal stopping is well known  \cite{beiglboeck-juillet2016}, \cite{hobson2011skorokhod}, \cite{hobson2012model}, \cite{hobson2015robust}. 
 We first get the following theorem as a corollary of Theorem~\ref{thm:hitting-unique}, where 	
 we assume the strict  separation assumption ${\rm supp\,}\mu \cap {\rm supp\,} \nu=\emptyset$ 
  to ignore the singularity at $x=y$.  
	Then  a localization argument allows us to remove this disjointness of supports in Theorem~\ref{cor:distance-unique} where  we show that there is a unique optimal randomized stopping time $\tau^*$ given by the hitting time of a barrier whenever $\tau^*>0$, assuming that $\mu$ and $\nu$ have densities $f\in C(\overline{O})$ and $g\in C(\overline{O})$,  respectively. 

\begin{theorem}\label{thm:distance-unique}
	 Use the same assumptions as in Theorem~\ref{thm:hitting-unique} except that $c(x,y)=|x-y|$.
		Assume further that  $d\geq 2$ and ${\rm supp\,}\mu \cap {\rm supp\,} \nu=\emptyset$. 
				Then, the following holds:
		
\begin{enumerate}
 \item  There exists a constant $D$ and $ \psi^*\in \mathcal{B}_D$ such that $(\psi^*, J_{\psi^*})$ maximize the dual problem. 
\item 
		There is a unique optimal  stopping time that is given by
		\begin{align}\label{eqn:distance_hitting_time-1}
			\eta = \inf\{t; J_{\psi^*}(B_0,B_t)=\psi^*(B_t)-|B_0-B_t|  \}. 
		\end{align}

\end{enumerate}
			\end{theorem}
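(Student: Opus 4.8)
The plan is to remove the diagonal singularity of $c(x,y)=|x-y|$ by a harmless regularization, deduce part (1) from Theorem~\ref{thm:strong_dual}, and then rerun the proof of Theorem~\ref{thm:hitting-unique}, the point being that the stochastic twist condition need only be invoked at pairs $(x,y)$ with $x\ne y$, where $|x-y|$ does satisfy it. First set $\delta_0={\rm dist\,}({\rm supp\,}\mu,{\rm supp\,}\nu)>0$ and fix a convex, non-decreasing $\phi\in C^2([0,\infty))$ with $\phi(0)=\phi'(0)=0$, with $\phi'(r)/r$ bounded near $0$, and with $\phi(r)=r$ for $r\ge\delta_0/2$. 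Then $\tilde c(x,y):=\phi(|x-y|)$ lies in $C(\overline O\times\overline O)$, satisfies $0\le\Delta_y\tilde c(x,y)=\phi''(r)+\tfrac{d-1}{r}\phi'(r)\le D$ (with $r=|x-y|$) for some $D=D(\phi,d)$, satisfies $\tilde c\le c$ by convexity, and agrees with $c$ on $\{|x-y|\ge\delta_0/2\}$; thus $\tilde c$ satisfies the hypotheses of Theorems~\ref{thm:strong_dual} and \ref{thm:verification}. Since every $\pi\in\mathcal{BM}(\mu,\nu)$ is supported on ${\rm supp\,}\mu\times{\rm supp\,}\nu\subseteq\{|x-y|\ge\delta_0\}$, where $\tilde c\equiv c$, the problems $\mathcal P_c(\mu,\nu)$ and $\mathcal P_{\tilde c}(\mu,\nu)$ are literally the same minimization: the same value, the same optimal plan $\pi^*$, and (via Lemma~\ref{lem:measure-stoppingtime}) the same optimal randomized stopping time $\tau^*$.

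For part (1), apply Theorem~\ref{thm:strong_dual} to $\tilde c$ to get $\psi^*\in\mathcal B_D$ with $\int_O\psi^*\,d\nu-\int_O J_{\tilde c,\psi^*}(x,x)\,d\mu=\mathcal D_{\tilde c}(\mu,\nu)=\mathcal P_{\tilde c}(\mu,\nu)=\mathcal P_c(\mu,\nu)=\mathcal D_c(\mu,\nu)$ (the last by Theorem~\ref{thm:weakdual}). Since $\tilde c\le c$, monotonicity of the superharmonic envelope gives $J_{c,\psi^*}\le J_{\tilde c,\psi^*}$ on $\overline O\times\overline O$, so
\[
 \mathcal D_c(\mu,\nu)\ \ge\ \int_O\psi^*\,d\nu-\int_O J_{c,\psi^*}(x,x)\,d\mu\ \ge\ \int_O\psi^*\,d\nu-\int_O J_{\tilde c,\psi^*}(x,x)\,d\mu\ =\ \mathcal D_c(\mu,\nu),
\]
which is equality throughout: $(\psi^*,J_{c,\psi^*})$ maximizes the $c$-dual, proving (1), and moreover $J_{c,\psi^*}(x,x)=J_{\tilde c,\psi^*}(x,x)$ for $\mu$-a.e.\ $x$. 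Next, apply Theorem~\ref{thm:verification} to $\tilde c$; since $\tilde c=c$ on ${\rm supp\,}\pi^*$ and $J_{c,\psi^*}(x,y)\ge\psi^*(y)-|x-y|$, everything transfers to $c$: $J_{c,\psi^*}(x,y)=\psi^*(y)-|x-y|$ for $\pi^*$-a.e.\ $(x,y)$, $\tau^*\ge\eta$ with $\eta$ the hitting time \eqref{eqn:distance_hitting_time-1}, and the identities \eqref{eqn:in-order}--\eqref{eqn:equalwithxi2} hold for $c$.

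For part (2), rerun the proof of Theorem~\ref{thm:hitting-unique} with $c(x,y)=|x-y|$. Lemma~\ref{lem:J_Lipschitz} applies with $K=1$; Lemma~\ref{lem:Jharmonic} applies, its proof using only $\mu\wedge\nu=0$ and the verification identities just established, so $y\mapsto J_{c,\psi^*}(x,y)$ is harmonic near $x$ for every $x\in{\rm int}({\rm supp\,}\mu)$; and Lemma~\ref{lem:diffintegral} then goes through, the gradient $\nabla_x|x-y|$ being evaluated only at $x\ne y$ (indeed $B^x_{\tau^*}\sim\pi^*_x$ lives in ${\rm supp\,}\nu\not\ni x$, and the law of $B^x_\eta$ places no mass on $\{x\}$ once one knows $\eta>0$ a.s., using $d\ge2$ and that Brownian motion a.s.\ never returns to its start). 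The sole place where that proof invokes (ST) is at the pairs $(x,B^x_\eta)$, $x\in{\rm int}({\rm supp\,}\mu)$; since $|x-y|$ satisfies (ST) precisely when $x\ne y$ (Section~\ref{sec:ST}), the whole argument then concludes $\tau^*=\eta$ a.s.\ for every optimal $\tau^*$ — hence $\eta$ is the unique optimal stopping time — provided one shows $B^x_\eta\ne x$, i.e.\ $(x,x)\notin\{J_{c,\psi^*}=\psi^*-c\}$, for $\mu$-a.e.\ $x$ (here $\mu(\partial\,{\rm supp\,}\mu)=0$ reduces matters to $x\in{\rm int}({\rm supp\,}\mu)$).

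That last point is the main obstacle. One expects it because near the diagonal the obstacle $y\mapsto\psi^*(y)-|x-y|$ is strongly superharmonic — $\Delta_y|x-y|=(d-1)/|x-y|\to+\infty$ — so it cannot be held up all the way to the diagonal by its minimal superharmonic majorant $J_{c,\psi^*}(x,\cdot)$, which by Lemma~\ref{lem:Jharmonic} is harmonic there; making this rigorous, i.e.\ ruling out that a harmonic function touches this obstacle at the cone point $x$, is the crux of the distance case and is what forces $J_{c,\psi^*}(x,x)>\psi^*(x)$, hence $\eta>0$ a.s. Everything else — the transfer of the dual optimizer and of the verification identities from $\tilde c$ to $c$, and the differentiation arguments — is the bookkeeping of Sections~\ref{sec:dual_attainment}--\ref{sec:hitting_time}.
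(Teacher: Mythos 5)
Your overall strategy — mollify the distance near the diagonal to a smooth, subharmonic, $D$-superharmonic $\tilde c$ that agrees with $c$ on $\{|x-y|\ge \delta_0/2\}$, deduce dual attainment from Theorem~\ref{thm:strong_dual}, and then deduce uniqueness and the hitting-time characterization — is the same as the paper's, and your treatment of part~(1) (including the monotonicity $J_{c,\psi^*}\le J_{\tilde c,\psi^*}$ that forces equality of the dual values) matches the paper's argument. For part~(2), however, the paper does \emph{not} rerun the proof of Theorem~\ref{thm:hitting-unique} for the unregularized cost as you propose. Instead, it applies Theorem~\ref{thm:hitting-unique} directly to $c^\epsilon$ to get $\tau^*=\eta^\epsilon$, then shows $\eta^\epsilon=\eta$ by proving $J_{\psi^*}(x,\cdot)=J^\epsilon_{\psi^*}(x,\cdot)$ on a neighborhood of each $x$ (strong maximum principle applied to the nonpositive superharmonic difference, which vanishes at $x$ by the verification theorem and Lemma~\ref{lem:Jharmonic}) and deducing from $c^\epsilon<c$ on $\{0<|x-y|<\epsilon\}$ that the two contact sets coincide. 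The uniqueness therefore comes pre-packaged from Theorem~\ref{thm:hitting-unique} applied to the smooth cost; no direct argument about $(ST)$ for the distance is made.

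The genuine gap in your proposal is exactly the one you flag and then leave open: you must show $\eta>0$ $\mu$-a.s., i.e. $(x,x)\notin\{J_{c,\psi^*}=\psi^*-c\}$, before $(ST)$ for $|x-y|$ (which is only defined and only holds off the diagonal) can be invoked. Your heuristic is not yet an argument. Note that the function $g(y)=J_{c,\psi^*}(x,y)-\psi^*(y)+|x-y|$ is nonnegative, vanishes at $y=x$, and is subharmonic near $x$ (since $\Delta g\ge -D+(d-1)/|x-y|>0$ there), but subharmonic functions certainly can attain interior minima (e.g.\ $|y|^2$), so there is no contradiction from the presence of an interior zero alone. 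What one can say is that the contact set cannot have positive Lebesgue measure near the diagonal, since there $\Delta \psi^*$ would have to equal $(d-1)/|x-y|>D$, contradicting $\psi^*\in\mathcal B_D$; but ruling out that the contact set is exactly the singleton $\{x\}$ requires more. The paper sidesteps this question by routing through Theorem~\ref{thm:hitting-unique} for the smooth $c^\epsilon$ and matching hitting sets; your plan to rerun that proof directly on the nonsmooth cost is a legitimate alternative, but as written it does not close, and the piece you declare to be ``the crux'' must be supplied for the proposal to be a proof.
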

	\begin{proof}
Let  $\tau^*$ be an optimal stopping time for the cost $c(x, y) = |x-y|$. 		
We let $\epsilon>0$ be such that $|x-y|\ge \epsilon$ for all $x\in {\rm supp}(\mu)$ and $y\in {\rm supp}(\nu)$.  Then we consider a smooth subharmonic function  $c^\epsilon(x,y) \le |x-y|$ such that $c^\epsilon(x,y)=|x-y|$ whenever $|x-y|\ge \epsilon$ . This can be easily constructed since for $d\ge 2$, $\Delta |x-y|>0$ whenever $|x-y|\not=0$.
Let $D$ be the constant with $0 \le \Delta_y c^\epsilon (x, y) \le D$.  
		
		First, observe that by construction of $c^\epsilon$ (and the separation of ${\rm supp\,}\mu$ and ${\rm supp\,}\nu$ by $\epsilon$),  $\tau^*$ is also an optimal stopping time for the cost $c^\epsilon$, and $$\mathbb{E}\left[c^\epsilon( B_0, B_\tau ) \right] = \mathbb{E}\left[|B_0 - B_\tau | \right].$$
		
		We now consider the dual optimizers for the cost $c^\epsilon$, namely,  $(\psi^*, J^\epsilon_{\psi^*})$ with $\psi^* \in \mathcal{B}_D$ obtained from Theorem \ref{thm:strong_dual}. 
	Here, $J^\epsilon_{\psi^*}$ is the value function with respect to $c^\epsilon$, that is, 
\begin{align*}
 J^\epsilon_{\psi^*} (x, y) := \sup_{\sigma} \mathbb{E}\big[ \psi^*(B^y_\sigma) - c^\epsilon (x, B^y_\sigma) \big].
\end{align*}
Then, from Theorem~\ref{thm:hitting-unique}, $\tau^* =\eta^\epsilon$, for $\eta^\epsilon$ given in \eqref{eqn:optimal_hitting}  with respect to $c^\epsilon$ and $J^\epsilon_{\psi^*}$ ; this also proves uniqueness of $\tau^*$. 

On the other hand, notice that because $c^\epsilon (x, y) \le |x-y|$, 
\begin{align}\label{eqn:JepsilonbiggerJ}
 J_{\psi^*} (x, y) := \sup_{\sigma} \mathbb{E}\big[ \psi^*(B^y_\sigma) - |x- B^y_\sigma| \big] \le J^\epsilon_{\psi^*} (x, y). 
 \end{align}
Therefore, 
\begin{align*}
 \int_O \psi^* (y)\nu (dy) - \int_O J_{\psi^*}(x,x) \mu(dx) &\ge
 \int_O \psi^* (y)\nu (dy) - \int_O J^\epsilon_{\psi^*} (x,x) \mu(dx)\\ & = \mathbb{E}\big[c^\epsilon( B_0, B_{\tau^*} ) \big]= \mathbb{E}\big[|B_0 - B_{\tau^*} | \big]. 
\end{align*}
This proves that the pair $(\psi^*, J_{\psi^*})$ is a dual optimizer for the cost $|x-y|$, and the above inequality is in fact an equality, thus,  applying  \eqref{eqn:JepsilonbiggerJ} we get 
\begin{align}\label{eqn:JJepsilonequal}
 J_{\psi^*} (x, y) = J^\epsilon_{\psi^*} (x, y) \hbox{ for $\pi^*$-a.e. $(x,y)$}
\end{align}
where $\pi^*$ is the optimal subharmonic martingale measure corresponding to $\tau^*$. For $\mu$-a.e.\ $x$, $y\mapsto J^\epsilon_{\psi^*}(x,y)$ is harmonic for $y$ satisfying $|y-x|\leq \epsilon$ by Lemma \ref{lem:Jharmonic}, so (\ref{eqn:JJepsilonequal}) and superharmonicity of $J_{\psi^*}$, imply that $J_{\psi^*}(x,y)\geq J^\epsilon_{\psi^*}(x,y)$ for all $y$ satisfying $|y-x|\leq \epsilon$. Then we see that $\tau^*=\eta^\epsilon=\eta$ satisfies   
\eqref{eqn:distance_hitting_time-1}. This completes the proof. 
\end{proof}

\begin{theorem}\label{cor:distance-unique}
	For $c(x,y)=|y-x|$ and $d\geq 2$, if $\mu\prec_{SH}\nu$, and $\mu$ and $\nu$ have densities $f\in C(\overline{O})$ and $g\in C(\overline{O})$,  then there is a unique optimal stopping time $\tau^*$ that is randomized only at time $0$.  The optimal stopping time is given by $\tau^*=0$ with  density $g\wedge f$
	and otherwise $\tau^*$ is the hitting time $\eta$,
	$$
		\eta = \inf\{t>0;\ (B_0,B_t)\in R\}
	$$
	for some $R\subset \overline{O}\times \overline{O}$ measurable.
\end{theorem}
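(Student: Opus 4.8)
I would reduce to Theorem~\ref{thm:distance-unique} by first peeling off the common mass and then localizing so that the relevant measures have separated supports.

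Write $f\wedge g$ for the density of $\mu\wedge\nu$ and set $\mu'=\mu-\mu\wedge\nu$, $\nu'=\nu-\mu\wedge\nu$. These are absolutely continuous with continuous densities $(f-g)^+$ and $(g-f)^+$, satisfy $\mu'\prec_{SH}\nu'$ (subtracting a common measure preserves the subharmonic order) and, crucially, $\mu'\wedge\nu'=0$. Since $c(x,x)=0$ and $c\ge 0$, Lemma~\ref{lem:stayput} lets me write $\mathcal{P}_c(\mu,\nu)=\mathcal{P}_c(\mu',\nu')$, produce an optimizer for $(\mu,\nu)$ from one for $(\mu',\nu')$ by stopping at $t=0$ a set of paths whose starting law is $\mu\wedge\nu$, and conversely show that the stay-put part of any optimizer of $(\mu,\nu)$ has starting law $\rho\le\mu\wedge\nu$ with the remainder optimal for $(\mu-\rho,\nu-\rho)$. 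It therefore suffices to prove: (i) $(\mu',\nu')$ has a unique optimal stopping time, which is $>0$ a.s.\ and a hitting time; and (ii) $\rho=\mu\wedge\nu$ for every optimizer. The first half of (i) is easy: if an optimal $\tau'$ for $(\mu',\nu')$ had $\tau'=0$ with positive probability, the law of $B_0$ on $\{\tau'=0\}$ would be a nonzero measure dominated by both $\mu'$ and $\nu'$, contradicting $\mu'\wedge\nu'=0$; hence $\tau'>0$ a.s.\ (and, since $d\ge 2$, $B_{\tau'}\ne B_0$ a.s.).

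The heart of the matter is the localization for (i). The open sets $U=\{f>g\}$ and $V=\{g>f\}$ are disjoint, $\mu'$ is concentrated on $U$ and $\nu'$ on $V$, and by continuity of $f,g$ the open set $U$ is disjoint from the closed set $\overline V\supset\operatorname{supp}\nu'$. Exhaust $U=\bigcup_nU_n$ by increasing open sets with $\overline{U_n}\subset U$ compact and $\mathrm{Leb}(\partial U_n)=0$. Fix an optimal $\tau'$, equivalently its plan $\pi'\in\mathcal{BM}(\mu',\nu')$; restricting to paths with $B_0\in U_n$, put $\mu'_n=\mu'|_{U_n}$ and let $\nu'_n$ be the law of $B_{\tau'}$ along those paths. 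Then the restricted plan is optimal for $(\mu'_n,\nu'_n)$ (a sub-plan of an optimal plan is optimal for its own marginals, by gluing a cheaper competitor back onto the complement), $\mu'_n\prec_{SH}\nu'_n$ (optional stopping along the restricted paths), $\mu'_n\in H^{-1}(O)$ (bounded density), $\mu'_n(\partial\operatorname{supp}\mu'_n)=0$ (as $\operatorname{supp}\mu'_n=\overline{U_n}$), and most importantly $\operatorname{supp}\mu'_n\subset\overline{U_n}\subset U$ is disjoint from $\operatorname{supp}\nu'_n\subset\overline V$, so $\mu'_n\wedge\nu'_n=0$ with \emph{separated} supports. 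Thus Theorem~\ref{thm:distance-unique} applies to $(\mu'_n,\nu'_n)$ and gives a unique optimal stopping time, the hitting time $\eta_n$ of the contact barrier $R_n$ of the associated dual optimizer; hence $\tau'$ restricted to $\{B_0\in U_n\}$ equals $\eta_n$. Since $U_n\uparrow U$, $\mu'(U)=\mu'(O)$, and the restrictions of $\tau'$ to $\{B_0\in U_n\}$ are mutually consistent, the barriers $R_n$ patch to a measurable $R\subset\overline O\times\overline O$ with $\tau'=\inf\{t>0:(B_0,B_t)\in R\}$ $\mu'$-a.s.; in particular $\tau'$ is nonrandomized, a hitting time, and the unique optimizer of $(\mu',\nu')$.

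For (ii), given any optimizer $\tau^*$ of $(\mu,\nu)$ with plan $\pi^*$, the restriction of $\pi^*$ to the diagonal has both marginals equal to some $\rho\le\mu\wedge\nu$, while the off-diagonal part is an optimizer of $(\mu-\rho,\nu-\rho)=(\kappa+\mu',\kappa+\nu')$, $\kappa=\mu\wedge\nu-\rho$, which is supported off the diagonal. Lemma~\ref{lem:stayput} together with the uniqueness established for the ``pure'' pair $(\mu',\nu')$ forces $\kappa=0$, i.e.\ $\rho=\mu\wedge\nu$; then $\tau^*=0$ on a set of starting density $f\wedge g$ and $\tau^*=\eta$ (the hitting time of $R$) elsewhere, so $\tau^*$ is randomized only at $t=0$, which completes the proof. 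The main obstacle is Step~3: arranging that the localized targets $\nu'_n$ have support separated from $\operatorname{supp}\mu'_n$ so that Theorem~\ref{thm:distance-unique} genuinely applies---this is exactly what continuity of $f,g$ buys, via $U\cap\overline V=\emptyset$---while also verifying that optimality and the (mild) regularity hypotheses survive the restriction; step (ii) additionally leans on the precise statement of Lemma~\ref{lem:stayput}.
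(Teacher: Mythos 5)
Your overall approach mirrors the paper's: peel off the common mass $\mu\wedge\nu$ via Lemma~\ref{lem:stayput}, localize the remainder to obtain strictly separated supports so that Theorem~\ref{thm:distance-unique} applies, and then take a limit to recover the barrier $R$. The paper localizes by restricting $\mu^+$ to points at distance $>\epsilon$ from $\partial\operatorname{supp}\mu^+$ while you exhaust $U=\{f>g\}$ by compactly contained open sets; this is an inessential variant and either works.

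There is, however, a genuine gap in your treatment of uniqueness. What you establish is that \emph{any given} optimal stopping time $\tau'$ for $(\mu',\nu')$, when positive, coincides $\mu'$-a.e.\ with a hitting time built from its own localized dual optimizers; from this you assert that $\tau'$ is ``the unique optimizer of $(\mu',\nu')$.'' But the localized targets $\nu'_n$ are defined \emph{in terms of} $\tau'$: if $\tau''$ is a second optimal stopping time, the corresponding $\nu''_n$ (the law of $B_{\tau''}$ on $\{B_0\in U_n\}$) need not equal $\nu'_n$, so the uniqueness statement in Theorem~\ref{thm:distance-unique} applied to the pair $(\mu'_n,\nu'_n)$ tells you nothing about $\tau''$. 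In other words, you have proved a hitting-time \emph{characterization} of each optimizer, not \emph{uniqueness} of the optimizer. The paper closes this gap with a separate argument: given two optimizers $\tau_1,\tau_2$, both (off $\{t=0\}$) hitting times of barriers $R_1,R_2$, form the randomized stopping time $\tau'$ that follows $\tau_1$ with probability $1/2$ and $\tau_2$ with probability $1/2$. Its Brownian martingale measure $\pi'=\tfrac12\pi_1+\tfrac12\pi_2$ is again optimal, so by the same characterization $\tau'$ on $\{\tau'>0\}$ must itself be a (nonrandomized) hitting time; if $R_1\ne R_2$ up to a set of positive measure, $\tau'$ would be a genuine mixture and hence randomized at some $t>0$, a contradiction. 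This convex-combination step is what your proposal is missing. (A smaller point: step (ii) in your write-up is redundant once Lemma~\ref{lem:stayput} is invoked, since that lemma already forces the stay-put density to be exactly $f\wedge g$, not merely $\le f\wedge g$.)
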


We will first show that  the overlapping mass, if any, of the probability measures $\mu$ and $\nu$ 
stays put under any optimal solution $\tau^*$.   This was already shown in \cite{ghoussoub-kim-lim-radial2017} by using the monotonicity principle of \cite{beiglboeck2017optimal}.  We shall give here a  direct proof without using that principle. 
\begin{lemma}[See \cite{ghoussoub-kim-lim-radial2017}]\label{lem:stayput}
We use the assumptions and notation of Theorem~\ref{cor:distance-unique}, except we suppose that the cost function $c$ satisfies  following conditions: 
\begin{itemize}
 \item $c$  is continuous and $c(x,x)=0, \forall x$;
 \item  $c$ satisfies the triangle inequality: $c(x, y) \le c(x, z) + c(z, y)$, $\forall x, y, z$,
 while equality holds only when $y$ is on the unique geodesic (line segment in $\R^d$) 
 connecting $x$ and $z$.
\end{itemize}
  Then,  any optimal randomized stopping time $\tau^*$ stops at time $0$ with density $f\wedge g$, i.e.\ 
  \begin{align*}
  	\mathbb{E}\big[\mathbf{1}\{\tau^*=0\}\big]=\int_O f(x)\wedge g(x)\ dx.
  \end{align*}
 \end{lemma}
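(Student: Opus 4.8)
The plan is to argue by contradiction: suppose an optimal randomized stopping time $\tau^*$ does \emph{not} leave the overlapping mass $f\wedge g$ in place, and then construct a strictly cheaper competitor, contradicting optimality. Concretely, let $\pi^*\in\mathcal{BM}(\mu,\nu)$ be the subharmonic martingale plan corresponding to $\tau^*$, with disintegration $\pi^*(dx,dy)=\pi^*_x(dy)\mu(dx)$. The ``stay put'' claim is equivalent to asserting that the diagonal part of $\pi^*$, namely the measure $\pi^*|_{\{x=y\}}$ pushed to $O$, has density exactly $f\wedge g$. Since $\delta_x\prec_{SH}\pi^*_x$ always puts the point $x$ in the convex hull of $\mathrm{supp}\,\pi^*_x$ in a weak sense, the diagonal part is automatically $\le f\wedge g$ pointwise after disintegration; the content is the reverse inequality. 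So I would assume there is a set $E\subset O$ of positive Lebesgue measure on which strictly less than $f\wedge g$ stays put, i.e. a positive amount of mass starting in $E$ genuinely moves.

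The key construction is a \emph{short-circuiting} of paths. Using Lemma~\ref{lem:measure-stoppingtime} freely to pass between plans and (randomized) stopping times, I would take the portion of mass that starts at $x$ and ends at some $y\ne x$ while $\mu,\nu$ still overlap at both $x$ and $y$, and re-route: instead of transporting $x\mapsto y$, keep a matching amount of $\mu$-mass at $x$ and a matching amount of $\nu$-mass delivered at $y$ directly from the overlap, rather than by moving. To make this rigorous as a competitor in $\mathcal{BM}(\mu,\nu)$ one must check the resulting modified plan $\tilde\pi$ still has marginals $\mu,\nu$ and still satisfies $\delta_x\prec_{SH}\tilde\pi_x$ for a.e.\ $x$; the marginal bookkeeping is where the densities $f,g\in C(\overline O)$ and the assumption $\mu\wedge\nu$ being nontrivial are used, and the subharmonic-order constraint is preserved because we are only \emph{removing} transport (replacing an excursion by the trivial one $\delta_x$), which can only make the disintegration ``less spread out.'' The cost saved is $\int c(x,y)$ over the re-routed mass, which is \emph{strictly} positive by the hypothesis $c(x,x)=0$ together with the strict triangle inequality: any genuine movement $x\to y$ with $y\ne x$ costs $c(x,y)>0=c(x,x)$, since $c(x,y)\le c(x,x)+c(x,y)$ with equality only on the degenerate geodesic. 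Hence $\int c\,d\tilde\pi<\int c\,d\pi^*=\mathcal{P}_c(\mu,\nu)$, a contradiction.

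I expect the main obstacle to be the rigorous construction of the modified plan $\tilde\pi$ and verifying it lies in $\mathcal{BM}(\mu,\nu)$ — in particular, showing that subtracting the ``overlap-to-overlap transport'' from $\pi^*$ and replacing it with the identity coupling on the same mass leaves a nonnegative measure with the right two marginals and with each conditional $\tilde\pi_x$ still dominating $\delta_x$ in subharmonic order. The measure-theoretic surgery requires a disintegration argument and a careful choice of \emph{how much} mass to peel off at each $(x,y)$ so that both marginals are preserved simultaneously; here one can use that the diagonal excess $f\wedge g$ minus (current stay-put density) is a genuine nonnegative measure absolutely continuous w.r.t.\ Lebesgue, and distribute the re-routing against it. Once the competitor is built, the strict-inequality step is immediate from $c(x,x)=0$ and strict triangle inequality, and optimality of $\tau^*$ closes the argument; the quantitative identity $\mathbb{E}[\mathbf{1}\{\tau^*=0\}]=\int_O f\wedge g\,dx$ then follows since the stay-put density is both $\le f\wedge g$ (automatic) and $\ge f\wedge g$ (the contradiction just proved rules out strict deficit on a positive-measure set).
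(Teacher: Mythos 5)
Your approach differs substantially from the paper's, and it has a genuine gap that you flag but do not close.

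The heart of your proposal is a surgery on the plan $\pi^*$: peel off some $x\to y$ transport and replace it with stay-put mass. The difficulty you anticipate — verifying the modified $\tilde\pi$ has marginals $\mu,\nu$ \emph{and} satisfies $\delta_x\prec_{SH}\tilde\pi_x$ — is not a technicality but the crux, and the handwave ``we are only removing transport, which makes the disintegration less spread out'' does not resolve it. Two concrete problems. First, removing a portion of the flow $x\to y$ and replacing it by $\delta_x$ destroys the subharmonic order of the conditional unless you replace the \emph{entire} conditional $\pi^*_x$ by $\delta_x$: for instance if $\pi^*_x=\tfrac12\delta_{y_1}+\tfrac12\delta_{y_2}$ (a legal stopping) and you keep a quarter of the mass at $x$, the resulting conditional no longer even has barycenter $x$. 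Second, removing $\delta$ units of flow into $y$ forces you to supply $\delta$ units of $\nu$-mass at $y$ from some other source $x'$; fixing the conditional at $x'$ then displaces $\nu$-mass at its previous destination, and you are launched into a cascade of corrections that must all close up simultaneously. There is also a purely logical slip: the inference ``$c(x,y)>0$ since $c(x,y)\le c(x,x)+c(x,y)$ with equality only on the degenerate geodesic'' gives nothing, because $c(x,y)=c(x,x)+c(x,y)$ trivially and $x$ is trivially on the segment from $x$ to $y$. (For the distance cost $c>0$ off the diagonal is obvious, but the lemma is stated for a more general class of costs, and the stated hypotheses alone do not force positivity off the diagonal.)

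The paper sidesteps both problems by never performing a surgery on the plan. Instead it builds a \emph{legal} competitor directly as a composition of two stopping rules: $\sigma$ follows $\tau^*$ with probability $1-\tfrac{f\wedge g}{f}$ at the start point and stops at time $0$ otherwise; $\xi$, starting from the law $\hat\mu$ of $B_\sigma$, follows $\tau^*$ with probability $\tfrac{f\wedge g}{\hat f}$ and stops otherwise; $\tau$ is the concatenation. This is automatically admissible (it is a genuine randomized stopping time) and a direct computation shows $\mathbb E[c(B_0,B_\sigma)]+\mathbb E[c(B_\sigma,B_\tau)]=\mathbb E[c(B_0,B_{\tau^*})]$. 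Optimality of $\tau^*$ together with the triangle inequality then forces equality in the triangle inequality along paths, and the strict equality case (the only part of the hypothesis on $c$ that actually carries weight) places $B_\sigma$ on the geodesic between $B_0$ and $B_\tau$. The martingale property of Brownian motion rules out a nontrivial on-geodesic intermediate point, so $\sigma=0$ or $\sigma=\tau$ a.s.; unwinding the definitions then yields $h=f\wedge g$. So the paper's argument is not a contradiction via cheaper competitor, but an exact-cost competitor plus an analysis of the rigidity in the triangle inequality — which is precisely what makes it go through without any marginal surgery.
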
 

\begin{proof}
Given an optimal randomized stopping time $\tau^*$, we let $h$ be the density it stops at time $0$, i.e.\
$$
	\mathbb{E}\big[\mathbf{1}\{\tau^*=0\}\big]=\int_O h(x)\ dx
$$
Notice that $h \le f \wedge g$. We will prove the equality. 
In the following we  use the convention in definitions that the value of a quotient is $1$ if the denominator vanishes.  

First, define a randomized stopping time $\sigma$ from the initial  distribution $\mu$ so  that $\sigma$  follows the stopping rule of $\tau^*$ with probability density $f - f \wedge g$ (of course, it is possible to stop at time $0$ for $\tau^*$ with a certain density) and stops at time $0$ otherwise. Namely, 
\begin{itemize}
\item the initial distribution of $\sigma$ is  $B_0\sim \mu$;
 \item $\sigma$ is randomized at the initial point $B_0$ as  
   \begin{align*}
\sigma = \begin{cases}
    \tau^*  & \text{with probability $1- \frac{f(B_0) \wedge g(B_0)}{f(B_0)}$}, \\
 0 & \text{with probability $\frac{f(B_0) \wedge g(B_0)}{ f(B_0)}$},
\end{cases} 
\end{align*}
\end{itemize}
In particular, for any continuous function $\phi$, we have 
\begin{align}\label{eqn:sigmaexp}
 \mathbb{E}\left[\phi(B_\sigma)\right] = \int \mathbb{E}\left[ \phi (B^x_{\tau^*}) \right](f(x)- f(x)\wedge g(x)) dx + \int \phi (x) f(x)\wedge g(x) dx.
\end{align}
 
Let $\hat \mu$ be the final  distribution of $\sigma$, i.e.\ $B_\sigma \sim \hat \mu$.  As $B_{\tau^*}\sim \nu$ has density, by the construction of $\sigma$ the distribution $\hat \mu$ also has density, say $\hat f$.  Observe that $\hat f \ge f \wedge g$.
We now define another randomized stopping time $\xi$ from the initial distribution $\hat \mu$ so that it follows the stopping rule of $\tau^*$ with probability density $f \wedge g$ and otherwise it stops at time $0$. More precisely,  
\begin{itemize}
 \item the initial distribution of $\xi$ is  $B_0 \sim \hat \mu$;
 \item $\xi$ is randomized at the initial point $B_0$ as 
    \begin{align*}
\xi = \begin{cases}
    \tau^*  & \text{with probability $\frac{f(B_0) \wedge g(B_0)}{\hat f(B_0)}$}, \\
 0 & \text{with probability $1-\frac{f(B_0) \wedge g(B_0)}{\hat f(B_0)}$}.
\end{cases} 
\end{align*}
\end{itemize}
We verify that $B_\xi$ has the same final distribution $\nu$ as  $B_{\tau^*}$. 
To see this  note that 
\begin{align}\label{eqn:xiexp}
 \mathbb{E}\big[\phi(B_\xi)\big] & = \int \mathbb{E}\big[ \phi (B^x_{\tau^*}) \big]f(x)\wedge g(x) dx + \int \phi (x) \big(\hat f(x) - f(x)\wedge g(x)\big) dx.
 \end{align}
 Here, realizing $\int \phi(x) \hat f(x)dx = \mathbb{E}\left[ \phi (B_\sigma) \right] $ and using \eqref{eqn:sigmaexp} we see that this equation implies
 $$\mathbb{E}\left[ \phi  (B _\xi)\right] =   \int \mathbb{E}\left[ \phi (B^x_{\tau^*})\right] f(x) dx  =  \mathbb{E}\left[ \phi (B_{\tau^*})\right],$$ verifying the claim $B_\xi\sim B_{\tau^*} \sim \nu$.

We now let $\tau$ be the randomized stopping time from the initial distribution $\mu$ that follows first $\sigma$ then $\xi$. That is, $\tau$ is the randomized stopping time on the random paths obtained by concatenating the random paths following the stopping rule of $\sigma$ with  the random paths following that of $\xi$. We have $\sigma \le \tau$ and that the final distribution of $\tau$ is the same as the final distribution of $\xi$.
Therefore,  $B_{\tau} \sim \nu$.

Observe using the fact $c(x,x) =0$ and \eqref{eqn:sigmaexp} that 
\begin{align*}
  \mathbb{E}\big[ c(B_0, B_{\sigma}) \big] =  \int \mathbb{E}\big[ c (x, B^x_{\tau^*}) \big]\big(f(x)- f(x)\wedge g(x)\big) dx .
  \end{align*}
Similarly, from the construction of $\tau$ and \eqref{eqn:xiexp}, 
  \begin{align*}
  \mathbb{E}\left[ c(B_\sigma, B_\tau) \right]& =  \mathbb{E}\left[ c(B_0, B_\xi) \right] = \int \mathbb{E}\left[ c(x,  B^x_{\tau^*}) \right]f(x)\wedge g(x) dx  .
   \end{align*}
This shows that 
\begin{align}\label{eqn:equalsigmatau}
  \mathbb{E}\big[ c(B_0, B_{\sigma}) \big] + \mathbb{E}\big[ c(B_\sigma, B_{\tau} )\big] 
 = \mathbb{E}\big[ c(B_0, B_{\tau^*}) \big].
\end{align}
On the other hand, from the optimality of $\tau^*$ (and the fact that $\tau^*$ and $\tau$ have the same initial and final distributions) and the triangle inequality of $c$, 
 we have 
\begin{align*}
 \mathbb{E}\big[ c(B_0, B_{\tau^*} )\big]  \le   \mathbb{E}\big[ c(B_0, B_{\tau} )\big]  \le  \mathbb{E}\big[ c(B_0, B_{\sigma}) \big] + \mathbb{E}\big[ c(B_\sigma, B_{\tau} )\big].
\end{align*}
Here, all these inequalities become equalities due to \eqref{eqn:equalsigmatau}.
In particular, we have the equality case of the triangle inequality where $B_\sigma$ is on the geodesic connecting $B_0$ and $B_{\tau}$, which holds only if 
\begin{align}\label{eqn:or}
 \hbox{$\sigma =0$ or $\sigma =\tau$ almost surely.}
 \end{align}
 Otherwise, we would find that where $\sigma>0$, $B_\tau = B_\sigma+\lambda (B_\sigma-B_0)$ for $\lambda>0$, and hence
 $$
 \mathbb{E}\big[(B_\tau-B_\sigma)\cdot (B_\sigma-B_0)\big|\mathcal{F}_\sigma\big]>0,
 $$
 which cannot hold do to the martingale property of Brownian motion.

We will analyze (\ref{eqn:or}) to draw our conclusion.  
For the random paths with $\sigma>0$, \eqref{eqn:or} and the definition of $\sigma$ implies  $\sigma =\tau=\tau^*$ so the point $B_\sigma$ lands at where $\xi=0$. Note that $\xi =0$ with probability 
\begin{align*}
  \frac{f(B_{\tau^*}) \wedge g(B_{\tau^*})}{\hat f(B_{\tau^*})}   \frac{h(B_{\tau^*})}{f(B_{\tau^*})}  +  \left(1- \frac{f(B_{\tau^*}) \wedge g(B_{\tau^*})}{\hat f(B_{\tau^*})} \right) =1
\end{align*}
This implies that if $\tau^*>0$ then $h(B_{\tau^*}) = f(B_{\tau^*})$. Therefore, whenever $f(x)>h(x)$ it must be the case that $h(x)=g(x)$ because $B_{\tau^*}=x$ only when $\tau^*=0$. This then implies that $h(x) = f(x) \wedge g(x)$, completing the proof. 
\end{proof}

\begin{proof}[\bf Proof of Theorem~\ref{cor:distance-unique}]
	We first reduce the problem to the case where the measures $\mu$ and $\nu$ have disjoint supports, then apply Theorem \ref{thm:distance-unique}. 
Indeed, from Lemma~\ref{lem:stayput}, 
we have that the optimal stopping time $\tau^*$ stops at time $0$ with density $g\wedge f$.  Thus we only need to characterize $\tau^*$ when $\tau^* >0$. 

We now show that if $\tau^*>0$, then $\tau^*$ is given by $\eta$, which is the hitting time of a barrier.  First, on the subset of the probability space where $\tau^*>0$, $\tau^*$ is optimal for transporting the mass $\mu^+$ with density $(f-g)^+$ to $\nu^+$ with density $(g-f)^+$.  These measures satisfy $\mu^+\wedge \nu^+=0$ and $\mu^+(\partial {\rm supp\,}\mu^+)=0$. 
 
We require one more step to reduce our setting to the case where the measures have strictly disjoint support so that we can  apply Theorem \ref{thm:distance-unique}. We introduce $\mu^+_\epsilon$ as the restriction of $\mu^+$ to points where the distance to $\partial {\rm supp\,}\mu^+$ is greater than $\epsilon$.  Then we let $\nu^+_\epsilon$ be the stopping distribution of the restriction of $\tau^*$ to the initial distribution $\mu^+_\epsilon$.  Notice that $\tau^*$ is still optimal to this restriction. Applying Theorem \ref{thm:distance-unique}, we have that this restricted problem has a unique optimal hitting time (equal to $\tau^*$) given by (\ref{eqn:distance_hitting_time-1}).  This defines the set $R$ whenever $|y-x|\geq \epsilon$.  Taking $\epsilon$ to zero we get that $\tau^*=\eta$ whenever $\tau^*>0$.

To prove that $\tau^*$ is unique, we suppose that $\tau_1$ and $\tau_2$ are both optimal randomized stopping times.  By the argument above we have that both stop at $t=0$ with density $g\wedge f$, and for $t>0$ are given by the hitting times $\eta_1$ and $\eta_2$.  We form the randomized stopping time $\tau'$ that stops at $\tau_1$ with probability $\frac{1}{2}$ and at $\tau_2$ with probability $\frac{1}{2}$, i.e.\ with Brownian martingale measure $\pi'=\frac{1}{2}\pi_1+\frac{1}{2}\pi_2$.  The same argument applies to show that if $\tau'>0$ then $\tau'=\eta'$, which is the hitting time to a barrier.  It follows that $\tau_1=\tau_2$ because otherwise there is a finite probability of finding $(B_0,B_t)\in R_1$ but $(B_0,B_t)\not\in R_2$ (or vice versa), which contradicts that $\tau'$ is the hitting time of a barrier.
\end{proof}
\begin{remark}\label{rml:f,g}
Notice that in Theorem~\ref{cor:distance-unique}, the condition $f, g \in C(\overline{O})$ can be relaxed. For example, it suffices that $g$ belongs to $H^{-1}(O)$ outside the support of $\mu$. 
\end{remark}
\appendix
\section{Approximations}

We give a couple of approximation (via inf-convolution) results discussed in \cite{lions1983optimal}, \cite{fukuda1993uniqueness}, \cite{crandall1992user}, which are used in Lemma \ref{lem:superharmonic} and Proposition \ref{lem:BD_properties}.
For $h\in LSC(\overline{O})$ we define the inf-convolution as
$$
	h_\epsilon(y)=\inf_{z\in \overline{O}} \big\{ h(z)+\frac{1}{2\epsilon} |y-z|^2\big\}.
$$

\begin{lemma}\label{lem:viscosity_approximation} 
	Given $h\in LSC(\overline{O})$, we have that $h_\epsilon$ is Lipschitz and semiconcave. If $\Delta h\leq D$ on $O$ in the sense of viscosity and ${\rm dist}(x,\partial O)^2>4\epsilon\|h\|_\infty$, then $\Delta h_\epsilon(x)\leq D$ in the sense of viscosity. Furthermore, $h_\epsilon$ has a distributional Hessian $\nabla^2 h_\epsilon$, that is a matrix-valued measure and satisfies $\Delta h_\epsilon \leq D$ in the weak sense for the open set $\{ y  \in O | \ {\rm dist}(y,\partial O)^2>4\epsilon\|h\|_\infty \}$.\\
In particular, there is a sequence of functions $h_i\in C^\infty(\overline{O})$ such that $h_i(x)\leq h(x)$ for $x\in O$ and for each $x\in O$ and $\delta>0$, there exists $I$ such that for $i\geq I$ we have $\Delta h_i(x)\leq D$ and $h(x)-h_i(x)\leq \delta$.
\end{lemma}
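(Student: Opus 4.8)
The plan is to run the standard inf-convolution argument, organised as: (i) basic regularity of $h_\epsilon$ together with the measure structure of $\nabla^2 h_\epsilon$; (ii) an interior-localisation estimate on the minimisers; (iii) the viscosity Laplacian bound via a translation of test functions; (iv) its upgrade to a weak bound; (v) the smooth approximations. Since $h\in LSC(\overline O)$ is bounded and $\overline O$ is compact, the infimum defining $h_\epsilon(y)$ is attained at some $z_\epsilon(y)\in\overline O$, and the identity $h_\epsilon(y)-\tfrac1{2\epsilon}|y|^2=\inf_{z}\big[h(z)+\tfrac1{2\epsilon}|z|^2-\tfrac1\epsilon y\cdot z\big]$ exhibits $h_\epsilon-\tfrac1{2\epsilon}|\cdot|^2$ as an infimum of affine functions of $y$, hence concave; so $h_\epsilon$ is semiconcave with constant $1/\epsilon$, and consequently $\nabla^2 h_\epsilon$ is a matrix-valued (signed Radon) measure with $\nabla^2 h_\epsilon\le\tfrac1\epsilon I$. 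From $h(z_\epsilon(y))+\tfrac1{2\epsilon}|y-z_\epsilon(y)|^2=h_\epsilon(y)\le h(y)\le\|h\|_\infty$ and $h\ge-\|h\|_\infty$ one gets the localisation bound $|y-z_\epsilon(y)|^2\le 4\epsilon\|h\|_\infty$; comparing the formula at two points then gives the (local) Lipschitz estimate for $h_\epsilon$. Thus the first sentence of the lemma and the measure statement are immediate.

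For the viscosity bound, fix $y_0$ with ${\rm dist}(y_0,\partial O)^2>4\epsilon\|h\|_\infty$ and let $w\in C^2$ touch $h_\epsilon$ from below at $y_0$; write $z_0=z_\epsilon(y_0)$. By the localisation bound, $z_0\in O$ and $z_0+(y-y_0)\in O$ for $y$ near $y_0$, so the admissible choice $z=z_0+(y-y_0)$ in the infimum gives $h_\epsilon(y)\le h\big(z_0+y-y_0\big)+\tfrac1{2\epsilon}|y_0-z_0|^2$, with equality at $y=y_0$. Hence $z\mapsto w\big(z+y_0-z_0\big)-\tfrac1{2\epsilon}|y_0-z_0|^2$ is a $C^2$ function touching $h$ from below at $z_0\in O$, so the viscosity hypothesis on $h$ yields $\Delta w(y_0)=\Delta\big(w(\cdot+y_0-z_0)\big)(z_0)\le D$. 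This proves $\Delta h_\epsilon\le D$ in the viscosity sense on the open set $U:=\{y\in O:\ {\rm dist}(y,\partial O)^2>4\epsilon\|h\|_\infty\}$. The distributional bound on $U$ then follows since $v:=h_\epsilon-\tfrac{D}{2d}|y|^2$ is continuous and a viscosity supersolution of $\Delta v=0$, hence superharmonic on $U$ by the classical equivalence between continuous viscosity supersolutions of the Laplace equation and superharmonic functions (the references cited above); this is consistent with $\nabla^2 h_\epsilon$ being a measure, so ``$\Delta h_\epsilon\le D$ weakly on $U$'' holds as claimed.

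For the ``in particular'' part, extend $h_\epsilon$ to $\R^d$ by the same formula (still semiconcave, and Lipschitz on bounded sets) and set $h_i:=h_{\epsilon_i}*\rho_{\delta_i}-\tfrac{\delta_i^2}{2\epsilon_i}$ for a standard symmetric mollifier $\rho$, with $\epsilon_i\downarrow 0$ and $\delta_i\downarrow 0$ chosen so that $L_{\epsilon_i}\delta_i+\delta_i^2/\epsilon_i\to 0$, where $L_{\epsilon_i}$ is the Lipschitz constant of $h_{\epsilon_i}$ near $\overline O$. Then $h_i\in C^\infty(\overline O)$; using the superdifferential of the semiconcave $h_{\epsilon_i}$ and symmetry of $\rho$, $h_{\epsilon_i}*\rho_{\delta_i}(x)-h_{\epsilon_i}(x)\le\tfrac1{2\epsilon_i}\int|z|^2\rho_{\delta_i}(z)\,dz\le\tfrac{\delta_i^2}{2\epsilon_i}$, so $h_i\le h_{\epsilon_i}\le h$ on $\overline O$. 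For fixed $x\in O$ and $i$ large, $B_{\delta_i}(x)\subset U$ for $\epsilon_i$, so mollifying the weak inequality of the previous paragraph gives $\Delta h_i(x)=(\Delta h_{\epsilon_i})*\rho_{\delta_i}(x)\le D$. Finally $h_i(x)\ge h_{\epsilon_i}(x)-L_{\epsilon_i}\delta_i-\tfrac{\delta_i^2}{2\epsilon_i}$, while $h_{\epsilon_i}(x)\nearrow h(x)$ as $i\to\infty$ (for LSC $h$, since $|x-z_{\epsilon_i}(x)|^2\le 4\epsilon_i\|h\|_\infty\to 0$ and $h$ is lower semicontinuous); together with $h_i\le h$ this gives $h_i(x)\to h(x)$, i.e. $h(x)-h_i(x)\le\delta$ for $i$ large.

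I expect the only genuinely non-routine point to be the viscosity step (iii): the translation trick that converts a $C^2$ function lying below $h_\epsilon$ at $y_0$ into a $C^2$ function lying below $h$ at the minimiser $z_0$, together with the observation that $z_0$ stays strictly inside $O$ exactly when ${\rm dist}(y_0,\partial O)^2>4\epsilon\|h\|_\infty$ — which is precisely what forces the statement to be restricted to that open subset. Everything else (the measure structure of $\nabla^2 h_\epsilon$ from semiconcavity, the viscosity$\Leftrightarrow$distributional equivalence for the Laplacian, and the mollification-with-downshift that preserves $h_i\le h$) is standard and I would simply cite it.
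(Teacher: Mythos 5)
Your proof is correct, and the core argument is the same as the paper's: semiconcavity of the inf-convolution, the localisation bound $|y-z_\epsilon(y)|^2\le 4\epsilon\|h\|_\infty$ to place the minimiser inside $O$, and the translation trick sending a $C^2$ test function touching $h_\epsilon$ from below at $y_0$ to a $C^2$ test function touching $h$ from below at $z_\epsilon(y_0)$.

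Where you diverge from the paper is in upgrading the viscosity bound $\Delta h_\epsilon\le D$ to the weak (distributional) bound. The paper decomposes the matrix-valued Hessian measure as $\nabla^2 h_\epsilon=M+T$ with $M\le 0$ singular, $T$ absolutely continuous, and rules out ${\rm tr}\,T>D$ at a density point by constructing a $C^2$ touching function that would violate the viscosity inequality — an Alexandrov-type argument. You instead apply the classical equivalence between continuous viscosity supersolutions of the Laplace equation and superharmonic functions to $v:=h_\epsilon-\tfrac{D}{2d}|y|^2$, giving $\Delta v\le 0$ distributionally on $U$ directly. Your route is cleaner and avoids manipulating the Lebesgue decomposition of the Hessian, at the price of invoking the viscosity$\leftrightarrow$distributional equivalence as a black box (which the paper's cited references do justify). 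Your mollification step also differs in a minor way: you subtract $\delta_i^2/(2\epsilon_i)$ computed from the semiconcavity modulus to force $h_i\le h_{\epsilon_i}$, while the paper subtracts a constant controlled by the Lipschitz modulus of $h_\epsilon$; both work, and both require choosing $\delta_i$ to vanish fast enough relative to $\epsilon_i$ for the pointwise convergence.

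One small presentational note: the paper does not explicitly record the localisation bound $|x-z|^2\le 4\epsilon\|h\|_\infty$ or the fact that it forces the minimiser $z$ into $O$, though it implicitly relies on it when invoking the viscosity property of $h$ at $z$. You make this explicit, which is a genuine improvement in exposition and explains precisely why the hypothesis ${\rm dist}(x,\partial O)^2>4\epsilon\|h\|_\infty$ appears.
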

\begin{proof}
	For each $z$, $y\mapsto h(z)+\frac{1}{2\epsilon} |y-z|^2$ is Lipschitz with constant $\epsilon^{-1}{\rm diam O}$ and if we subtract $\frac{1}{2\epsilon}|y|^2$, it becomes concave.  These properties are inherited by the infimum.\\
Suppose now that $\Delta h(y)\leq D$ in the sense of viscosity for all $y\in O$, and ${\rm dist}(x,\partial O)^2> 4\epsilon \|h\|_\infty$.  Then there is $z\in O$ such that
	$
		h_\epsilon(x)=h(z)+\frac{1}{2\epsilon} |x-z|^2.
	$ 
Now suppose that $w$ touches $h_\epsilon(x)$ from below at $x$.  Then we define
	$$
		\phi(y):=w(x+y-z)-\frac{1}{2\epsilon}|x-z|^2.
	$$
	Then we have that
	$$
		\phi(z)= w(x)-\frac{1}{2\epsilon}|x-z|^2=h(z),
	$$
	and 
	$$
		\phi(y)\leq h_\epsilon(x+y-z)-\frac{1}{2\epsilon}|x-z|^2\leq h(y).
	$$
	Thus using the viscosity property $\Delta h \le D$ of $h$, we have $\Delta\phi(z)\leq D$, and since $\Delta\phi(z)=\Delta w(x)$, this shows that $\Delta h_\epsilon(x)\leq D$ in the sense of viscosity.\\
It follows from semi-concavity, that the distributional Hessian of $h_\epsilon$ is a matrix-valued measure.  Furthermore, it decomposes as $\nabla^2 h_\epsilon = M+T$, where $M\leq 0$,  $T\in L^\infty$ and $M \perp T$. 
  If there was a point with density at $x$ where ${\rm tr}(T)>D$, we could construct a $C^2$ function near $x$ satisfying $D<\Delta w(x)<{\rm tr}(T)(x)$ such that $w(x)=h_\epsilon(x)$ and $w(y)\leq h_\epsilon(y)$.  Thus for $x$ satisfying ${\rm dist}(x, \partial O)^2>4\epsilon\|h\|_\infty$ we have $\Delta h\leq D$ in the weak sense.\\
Finally, for the smooth approximation, we can define $\hat{h}_\epsilon(x)$ by extending $h_\epsilon$ outside of the domain $O$ and convolving with a smooth mollifier whose support shrinks as  $\delta\to 0$.  Note that $\Delta \hat h_\epsilon \le D$.  We can then subtract a small constant so that $\hat{h}_\epsilon \leq h_\epsilon$ but is converging uniformly as $\delta \to 0$ (from Lipschitz property of $h_\epsilon$).   The result on smooth approximation follows by choosing both $\epsilon$ and $\delta$ sufficiently small, where we use the fact that $h_\epsilon \le h$ and from the lower semicontinuity of $h$, $\liminf_{\epsilon\to 0} h_\epsilon \ge h$ for each $x$. This completes the proof. 
\end{proof}
For functions in $\mathcal{B}_D$, the boundary condition gives a cleaner smooth approximation as follows. 
\begin{lemma}\label{lem:approxBD}
	There is an open bounded convex  set $\tilde{O}$ (depending only on $O$ and $D$) such that $O \subset \tilde O$ and for any $0<\epsilon\leq 1$ and $h\in \mathcal{B}_D$, $h_\epsilon$ satisfies $h_\epsilon(x)\leq 0$ and $\Delta h_\epsilon(x)\leq D$ for $x\in \tilde{O}$ and $h_\epsilon(y)=0$ for $y\in \partial \tilde{O}$.  It follows that each $h\in \mathcal{B}_D$ can be approximated pointwise by smooth functions satisfying the properties above in some open bounded convex set containing $O$.
\end{lemma}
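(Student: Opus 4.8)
The strategy is to extend $h$ by zero to a slightly larger convex domain $\tilde O$, apply the inf-convolution estimates of Lemma~\ref{lem:viscosity_approximation} there, and choose the size of $\tilde O$ so that the layer near $\partial\tilde O$ on which Lemma~\ref{lem:viscosity_approximation} loses control is precisely a region where the inf-convolution of the extension vanishes identically. The one quantitative input is the uniform bound from \eqref{eqn:zero-bound}: every $h\in\mathcal B_D$ satisfies $\|h\|_{L^\infty(\overline O)}\le C_0:=D\sup_{z\in\overline O}|u_O(z)|$, a constant depending only on $O$ and $D$ (with $u_O$ as in \eqref{eqn:u-O}). I then fix $R_0:=1+4\sqrt{C_0}$ and set $\tilde O:=\{y\in\R^d:\ {\rm dist}(y,\overline O)<R_0\}$, which is open, bounded, convex (being an open neighbourhood of the convex set $\overline O$), contains $\overline O$, and depends only on $O$ and $D$.

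Given $h\in\mathcal B_D$, I extend it by zero: $\hat h:=h$ on $\overline O$ and $\hat h:=0$ on $\overline{\tilde O}\setminus\overline O$ (consistent on $\partial O$, where $h=0$). One checks readily that $\hat h\in LSC(\overline{\tilde O})$, that $\hat h\le 0$ with $\|\hat h\|_\infty\le C_0$, that $\hat h\equiv 0$ on $\overline{\tilde O}\setminus\overline O$, and---the only point that is not immediate---that $\Delta\hat h\le D$ in the viscosity sense on all of $\tilde O$. Indeed, the only new test points are $y\in\partial O$: if $w\in C^2$ touches $\hat h$ from below at $y$, then $w(y)=\hat h(y)=0$ and $w\le\hat h\le 0$ near $y$, so $y$ is a local maximum of $w$ in $\R^d$; hence $\nabla^2 w(y)\le 0$ and $\Delta w(y)\le 0\le D$. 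The inf-convolution $h_\epsilon(y):=\inf_{z\in\overline{\tilde O}}\{\hat h(z)+\frac{1}{2\epsilon}|y-z|^2\}$ appearing in the statement is understood to be that of $\hat h$.

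Next I apply Lemma~\ref{lem:viscosity_approximation} to the pair $(\tilde O,\hat h)$: $h_\epsilon$ is Lipschitz and semiconcave, and $\Delta h_\epsilon\le D$ in the viscosity sense (and, by semiconcavity, weakly) at every $x$ with ${\rm dist}(x,\partial\tilde O)^2>4\epsilon\|\hat h\|_\infty$; since $\epsilon\le 1$ and $\|\hat h\|_\infty\le C_0$, this holds whenever ${\rm dist}(x,\partial\tilde O)^2>4C_0$. Taking $z=y$ gives $h_\epsilon\le\hat h\le 0$ on $\overline{\tilde O}$. Moreover, if ${\rm dist}(y,\overline O)^2\ge 2C_0$ then for $z\in\overline O$ one has $\hat h(z)+\frac{1}{2\epsilon}|y-z|^2\ge -C_0+\frac12{\rm dist}(y,\overline O)^2\ge 0$, while for $z\notin\overline O$ the bracket is $\ge 0$; as $\hat h(y)=0$ as well, this forces $h_\epsilon(y)=0$ there. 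In particular $h_\epsilon=0$ on $\partial\tilde O$ and on a one-sided neighbourhood of it, because $R_0^2>2C_0$. These two facts together cover $\tilde O$: on $\{{\rm dist}(\cdot,\overline O)^2\ge 2C_0\}\cap\tilde O$ we have $h_\epsilon\equiv 0$, hence $\Delta h_\epsilon=0\le D$; while on $\{{\rm dist}(\cdot,\overline O)^2<2C_0\}\cap\tilde O$ we have ${\rm dist}(\cdot,\partial\tilde O)\ge R_0-\sqrt{2C_0}>2\sqrt{C_0}$, so ${\rm dist}(\cdot,\partial\tilde O)^2>4C_0$ and the Laplacian bound applies. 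This gives the asserted properties of $h_\epsilon$ on $\tilde O$; in fact $\Delta h_\epsilon\le D$ holds weakly on an open neighbourhood $V$ of $\overline{\tilde O}$. For the final sentence I mollify: with a standard mollifier $\rho_\delta$ supported in the $\delta$-ball, set $\psi_{\epsilon,\delta}:=h_\epsilon*\rho_\delta\in C^\infty(\R^d)$; for $\delta$ small enough that the $\delta$-neighbourhood of $\overline{\tilde O}$ lies in $V$ and that $\delta$ is below the width of the zero-region of $h_\epsilon$ around $\partial\tilde O$, one gets $\psi_{\epsilon,\delta}\le 0$ and $\Delta\psi_{\epsilon,\delta}\le D$ on $\tilde O$ and $\psi_{\epsilon,\delta}\equiv 0$ near $\partial\tilde O$. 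Since $h_\epsilon$ is Lipschitz, $|\psi_{\epsilon,\delta}-h_\epsilon|\le\mathrm{Lip}(h_\epsilon)\,\delta$ pointwise; choosing $\delta=\delta(\epsilon)\to 0$ fast enough that $\mathrm{Lip}(h_\epsilon)\,\delta(\epsilon)\to 0$ (e.g.\ $\delta(\epsilon)=\epsilon^2$), and using the standard pointwise convergence $h_\epsilon\to\hat h$ of inf-convolutions of lower semicontinuous functions, I obtain $\psi_{\epsilon_i,\delta(\epsilon_i)}\to\hat h=h$ pointwise on $\overline O$ along any sequence $\epsilon_i\to 0$. These are the desired smooth approximants.

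The main obstacle is the calibration of $R_0$: the enlargement must be wide enough that the inf-convolution is forced to vanish before reaching $\partial\tilde O$ (which costs $\sqrt{2C_0}$ of margin, using $\epsilon\le 1$ and $\hat h\ge -C_0$), yet the region where the Laplacian estimate of Lemma~\ref{lem:viscosity_approximation} is valid must still reach across the entire part of $\tilde O$ that ``sees'' the support of $\hat h$ (which costs a further $2\sqrt{C_0}$). The uniform $L^\infty$-bound \eqref{eqn:zero-bound} on $\mathcal B_D$ is exactly what lets both demands be met by a single $R_0$ depending only on $O$ and $D$; the other mildly delicate point, as noted above, is verifying $\Delta\hat h\le D$ in the viscosity sense across $\partial O$ after the zero extension.
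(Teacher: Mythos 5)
Your proof is correct and follows the paper's strategy (extend $h$ by zero, inf-convolve, use the uniform $L^\infty$ bound \eqref{eqn:zero-bound} on $\mathcal{B}_D$ to force $h_\epsilon$ to vanish away from $\overline{O}$, calibrate the size of $\tilde{O}$, mollify). The only deviations are cosmetic: the paper takes the inf-convolution over all of $\R^d$ (which makes the boundary-distance restriction in Lemma~\ref{lem:viscosity_approximation} vacuous) rather than over $\overline{\tilde{O}}$ as you do, and you explicitly verify the viscosity inequality $\Delta \hat h\le D$ across $\partial O$, a point the paper asserts without proof.
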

\begin{proof}
For each $h \in \mathcal{B}_D$, we extend  it to $\R^d$ by giving zero value on $\R^d \setminus O$. Notice that this extended function $h$ (using the same notation) still satisfies $\Delta h \le D$ in $\R^d$ in the sense of viscosity.  Define $h_\epsilon(y)=\inf_{z\in \R^d} h(z)+\frac{1}{2\epsilon}|y-z|^2$. As $h$ is bounded, we can apply the same reasoning as in the proof of Lemma~\ref{lem:viscosity_approximation} to see that  $\Delta h_\epsilon(x)\leq D$ for all $x\in \R^d$ in the sense of viscosity. Also it clearly holds $h_\epsilon \le h \le 0$. \\
Consider a uniform bound $M$ for functions on $B_D$, i.e.\ $\sup_{x\in O}\left|u_O(x)\right|$  as in \eqref{eqn:zero-bound}.
From this, observe that for $y$ satisfying ${\rm dist}(y, O)^2\geq 4\epsilon M$, we have $h_\epsilon(y)=0$.  
A further convolution by a smooth bump function allows us to approximate $h_\epsilon$ uniformly with smooth functions $h_\epsilon^i$ that also satisfy $\Delta h_\epsilon^i(x)\leq D$ and $h_\epsilon^i \le 0$, with support in a fixed convex bounded open domain  $\tilde O$ (independent of individual $h$), which contains $O$ and $|y-z|^2\geq 5 M$ for all $y\in \partial \tilde{O}$ and $z\in O$. By also letting $\epsilon \to 0$, point-wise convergence of $h_\epsilon^i$ to $h$ follows.  This completes the proof. 
\end{proof}

\section{A proof of the monotonicity principle} 
As a simple application of our dual attainment (Theorem ~\ref{thm:strong_dual} and  ~\ref{thm:verification}), 
we now provide an alternative  proof of the following version of the monotonicity principle of Beiglb\"ock, Cox, and Huesmann \cite{beiglboeck2017optimal} adapted to our setting:

\begin{theorem}[See  \cite{beiglboeck2017optimal}]\label{thm:monotonicity} With the same notation as in Theorem ~\ref{thm:strong_dual} and  ~\ref{thm:verification}, in particular we consider  
the disintegrated probability measure $\sigma(dx,dy)=\sigma_x(dy)\mu(dx)$ such that $0\prec_{SH}\sigma_x\prec_{SH}  \pi_x^*$ for each $\mu$-a.e. $x$, 
and its corresponding randomized stopping time $\xi$ (note that $\xi \le \tau^*$).

For $\pi^*$-a.e.  $(x, y)$ and $\sigma$-a.e. $(x', y')$, it holds that  if $y=y'$ then the stopping time $\tau^*-\xi$ restricted to paths with $B_\xi=y$ satisfies
 \begin{align*}
  c(x, y )  + \mathbb{E}\big[c(x', B^{y}_{\tau^*-\xi} )\big] \le  c(x', y) 
 +  \mathbb{E}\big[c(x, B^{y}_{\tau*-\xi}) \big]. 
  \end{align*}
\end{theorem}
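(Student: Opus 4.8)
The plan is to use strong duality (Theorem~\ref{thm:strong_dual}) together with the verification theorem (Theorem~\ref{thm:verification}) to squeeze both sides of the asserted inequality between one and the same quantity, namely $\psi^*(y)-\mathbb{E}\big[\psi^*(B^y_{\tau^*-\xi})\big]$. Throughout I write $\zeta$ for the stopping time $\tau^*-\xi$ restricted to the paths with $B_\xi=y$; by the strong Markov property $\zeta$ is a stopping time for Brownian motion started at $y$ with $\zeta\le \tau_O$, so the superharmonicity estimate for $J_{\psi^*}(a,\cdot)$ applies along it. The three ingredients I would use are: (i) the pointwise inequality $J_{\psi^*}(a,b)\ge \psi^*(b)-c(a,b)$ for all $(a,b)\in\overline{O}\times\overline{O}$, immediate from the definition of $J_{\psi^*}$; (ii) superharmonicity of $b\mapsto J_{\psi^*}(a,b)$, i.e.\ $J_{\psi^*}(a,y)\ge \mathbb{E}\big[J_{\psi^*}(a,B^y_\sigma)\big]$ for every stopping time $\sigma\le\tau_O$; and (iii) the consequences of Theorem~\ref{thm:verification}: since $(B_0,B_{\tau^*})\sim\pi^*$, the contact identity $J_{\psi^*}(B_0,B_{\tau^*})=\psi^*(B_{\tau^*})-c(B_0,B_{\tau^*})$ holds almost surely, and \eqref{eqn:equalwithxi2} gives $\mathbb{E}\big[J_{\psi^*}(x',B^{y'}_{\tau^*-\xi})\big]=J_{\psi^*}(x',y')$ for $\sigma$-a.e.\ $(x',y')$.

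First I would treat the side coming from $(x,y)\sim\pi^*$. The contact identity (iii) at $(x,y)$ gives $c(x,y)=\psi^*(y)-J_{\psi^*}(x,y)$; bounding $c(x,B^y_\zeta)$ from below by (i) and then using superharmonicity (ii) of $J_{\psi^*}(x,\cdot)$ along $\zeta$ yields
\begin{align*}
c(x,y)-\mathbb{E}\big[c(x,B^y_\zeta)\big]\le \psi^*(y)-\mathbb{E}\big[\psi^*(B^y_\zeta)\big].
\end{align*}
Next I would treat the side coming from $(x',y')\sim\sigma$ with $y'=y$: restricting the contact identity (iii) to the paths with $B_\xi=y'$, whose terminal point is $B_{\tau^*}=B^{y'}_\zeta$, gives $c(x',B^{y'}_\zeta)=\psi^*(B^{y'}_\zeta)-J_{\psi^*}(x',B^{y'}_\zeta)$ almost surely on this event. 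Taking expectations and invoking \eqref{eqn:equalwithxi2} produces $\mathbb{E}\big[c(x',B^y_\zeta)\big]=\mathbb{E}\big[\psi^*(B^y_\zeta)\big]-J_{\psi^*}(x',y)$, and combining this with the bare inequality (i) at $(x',y)$, which reads $c(x',y)+J_{\psi^*}(x',y)\ge\psi^*(y)$, gives
\begin{align*}
c(x',y)-\mathbb{E}\big[c(x',B^y_\zeta)\big]\ge \psi^*(y)-\mathbb{E}\big[\psi^*(B^y_\zeta)\big].
\end{align*}
Chaining the two displays and rearranging would then give precisely $c(x,y)+\mathbb{E}\big[c(x',B^y_\zeta)\big]\le c(x',y)+\mathbb{E}\big[c(x,B^y_\zeta)\big]$, the claimed inequality.

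The algebraic steps above --- unwinding the definition of $J_{\psi^*}$ and applying superharmonicity --- are routine. The one point that needs care, and which I would handle by quoting Theorem~\ref{thm:verification} rather than re-deriving, is the measure-theoretic bookkeeping behind the phrase ``restricted to paths with $B_\xi=y'$'': one must know that the disintegration of the optimal path law along $(B_0,B_\xi)$ is such that, for $\sigma$-a.e.\ $(x',y')$ simultaneously, both the terminal contact identity and the identity \eqref{eqn:equalwithxi2} hold on the conditioned family of paths. Since these are exactly the conclusions of Theorem~\ref{thm:verification}, the main obstacle here is organizational rather than substantive; there is no genuinely hard analytic step.
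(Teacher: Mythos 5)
Your proof is correct and uses exactly the same ingredients as the paper's own argument: the two contact identities from Theorem~\ref{thm:verification}, the dynamic-programming (equivalently, superharmonicity plus the pointwise lower bound) inequality for $J_{\psi^*}$, and the identity \eqref{eqn:equalwithxi2}. The only difference is presentational---you sandwich both sides against $\psi^*(y)-\mathbb{E}\big[\psi^*(B^y_{\tau^*-\xi})\big]$ while the paper subtracts a sum of equalities from a sum of inequalities---but the underlying algebra is identical.
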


\begin{proof}
Notice that from Theorem~\ref{thm:verification}, we have for $\pi^*$-a.e. $(x, y)$ and $\sigma$-a.e. $(x', y')$, 
\begin{align} 
 J_{\psi^*}(x, y) & = \psi^* (y) - c(x, y), \label{yes1}\\   
 J_{\psi^*}(x', B^{y'}_{\tau^*-\xi}) = J_{\psi^*}(x', B^{x'}_{\tau^*})& = \psi^* (B^{x'}_{\tau^*}) -  c(x', B^{x'}_{\tau^*})=\psi^* (B^{y'}_{\tau^*-\xi}) -  c(x', B^{y'}_{\tau^*-\xi}). \label{yes2}
\end{align}
From the dynamic programming formulation of $J_{\psi^*}$, we have  
\begin{align}\label{OK1}
J_{\psi^*}(x, y') 
  \ge \mathbb{E}\Big[  \psi^* (B^{y'}_{\tau^*-\xi}) - c(x, B^{y'}_{\tau^*-\xi}) \Big].
\end{align}
It also follows from (\ref{eqn:equalwithxi2}) that 
\begin{align}\label{OK2}
\mathbb{E}\Big[ J (x', B^{y'}_{\tau^*-\xi} ) \Big]  = J_{\psi^*}(x', y') &\ge \psi^* (y') - c(x', y'), 
\end{align}
where the first equality  follows from \eqref{eqn:equalwithxi} and the inequality from the dynamic programming principle. 
Taking the expectation in \eqref{yes2}, we see that under the condition  $y=y'$, the left hand sides of (\ref{yes1}), (\ref{yes2})
are equal to the left hand sides of the inequalities (\ref{OK1}) and (\ref{OK2}). Now, subtract the sum of the two equations from the sum of the two inequalities and cancel the terms  $\psi^*(y)$ and $\mathbb{E}\big[  \psi^*(B^{y}_{\tau^*-\xi})\big]$, to obtain 
\begin{align*}
  0 \ge  - c(x', y) -  \mathbb{E}\Big[  c(x, B^{y'}_{\tau^*-\xi})\Big]
  + c(x, y) + \mathbb{E}\Big[  c(x', B^{y'}_{\tau^*-\xi})  \Big], 
\end{align*}
hence completing the proof. 
 \end{proof}
\begin{remark}\label{rmk:monotonicity}
 The condition $y=y'$ together with  $\sigma \le \tau^*$
 gives a version of the stop-go pair of \cite{beiglboeck2017optimal}. 
 Our proof is similar in spirit to that of \cite{Guo-Tan-Touzi-2016monotone} where weak duality is used, while we use the strong duality (dual attainment) under the additional assumption  $0\le \Delta_y c (x, y) \le D$ among others.  Because of this last condition, our monotonicity result does not completely replace that of \cite{beiglboeck2017optimal} for the distance cost $|x-y|$. 
\end{remark}

\bibliography{Skorokhod}
\bibliographystyle{plain}

\end{document}